\newtheorem{theorem}{Theorem}[section]
\newtheorem{lemma}[theorem]{Lemma}
\newtheorem{proposition}[theorem]{Proposition}
\newtheorem{corollary}[theorem]{Corollary}
\newtheorem{definition}[theorem]{Definition\rm}
\newtheorem{remark}{Remark}
\newcommand*{\R}{\ensuremath{\mathbb{R}}}
\newcommand*{\N}{\ensuremath{\mathbb{N}}}
\newcommand*{\C}{\ensuremath{\mathbb{C}}}
\newcommand{\bA}{\mathbf{A}}
\newcommand{\bB}{\mathbf{B}}
\newcommand{\bC}{\mathbf{C}}
\newcommand{\sym}{{\mathrm{Sym}}\,}
\newcommand{\sC}{\mathscr{C}}
\newcommand{\sS}{\mathscr{S}}
\newcommand{\sD}{\mathscr{D}}
\newcommand{\sL}{\mathscr{L}}
\newcommand{\zb}{\overline{z}}
\newcommand{\vertiii}[1]{{\left\vert\kern-0.25ex\left\vert\kern-0.25ex\left\vert #1 
    \right\vert\kern-0.25ex\right\vert\kern-0.25ex\right\vert}}
\begin{document}

\title[Nash-Kuiper theorem in $C^{\sfrac{1}{5}-\delta}$]
{A Nash-Kuiper theorem for $C^{1,\sfrac{1}{5}-\delta}$ immersions of surfaces in $3$ dimensions}

\author[De Lellis]{Camillo De Lellis}
\address{Institut f\"ur Mathematik, Universit\"at Z\"urich, CH-8057 Z\"urich}
\email{camillo.delellis@math.uzh.ch}

\author[Inauen]{Dominik Inauen}
\address{Institut f\"ur Mathematik, Universit\"at Z\"urich, CH-8057 Z\"urich}
\email{dominik.inauen@uzh.ch}

\author[Sz\'ekelyhidi]{L\'aszl\'o Sz\'ekelyhidi Jr.}
\address{Institut f\"ur Mathematik, Universit\"at Leipzig, D-04109 Leipzig}
\email{laszlo.szekelyhidi@math.uni-leipzig.de}

\maketitle

\begin{abstract} 
We prove that, given a $C^2$ Riemannian metric $g$ on the $2$-dimensional disk $D_2$, any short $C^1$ immersion of $(D_2,g)$ into $\R^3$ can be uniformly approximated with $C^{1,\alpha}$ isometric immersions for any $\alpha < \frac{1}{5}$. This statement improves previous results by Yu.F.~Borisov and of a joint paper of the first and third author with S.~Conti.
\end{abstract}

\section{Introduction}

In this paper we consider isometric immersions of $2$-dimensional disks in $\R^3$.  With $D_r (x_0)$ and $\overline{D}_r (x_0)$ we denote, respectively, the open and closed disks in $\R^2$ with center $x_0$ and radius $r$. When $x_0=0$ we write simply $D_r$, resp. $\overline{D}_r$. If $g$ is a $C^0$ Riemannian metric on $\overline{D}_r (x_0)$, an isometric immersion $u: \overline{D}_r (x_0)\to \R^n$ is a $C^1$ immersion  such that $u^\sharp e = g$, where $e$ denotes the Euclidean metric on $\R^n$. In other words this means that
\begin{equation}
\partial_i u \cdot \partial_j u = g_{ij}\, .
\end{equation}
If $0 < \partial_i u \cdot \partial_j u < g_{ij}$ in the sense of quadratic forms, we then call $u$ a short immersion. Our main theorem is the following approximation result which, using a popular terminology, is an ``$h$-principle'' statement, cf. \cite{Gromov86,SzLecturenotes,DSsurvey}.

\begin{theorem}\label{t:main}
Let $g$ be a $C^2$ metric on $\overline{D}_2$ and $\bar u\in C^1 (\overline{D}_2, \R^3)$ a short immersion. For every $\delta>0$ and $\varepsilon > 0$ there is a $C^{1, \sfrac{1}{5}-\delta}$ isometric immersion $u$ of $(\overline{D}_1, g)$ in $\R^3$ such that $\|\bar u-u\|_{C^0} < \varepsilon$. 
\end{theorem}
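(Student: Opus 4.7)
The plan is to prove Theorem \ref{t:main} via a Nash-Kuiper style convex integration scheme, constructing the desired isometric immersion as the uniform limit of smooth short immersions $(u_q)_{q\in\N}$ of $\overline{D}_{r_q}$ into $\R^3$, where $r_q\searrow 1$. One chooses parameters of the standard form $\lambda_q = a^{b^q}$ and $\delta_q = \lambda_q^{-2\beta}$ with $\beta$ slightly below $\sfrac{1}{5}$, $a\gg 1$ large, and $b>1$ to be tuned, and maintains at each stage the inductive bounds
\[
\|g - u_q^\sharp e\|_0 \le C\delta_{q+1}, \qquad \|u_q\|_{C^2} \le C\lambda_q, \qquad \|u_{q+1} - u_q\|_{C^1} \le C\delta_{q+1}^{1/2}.
\]
As in the works of Borisov and Conti-De Lellis-Sz\'ekelyhidi, these estimates imply that $(u_q)$ is Cauchy in $C^{1,\alpha}$ for every $\alpha<\beta$; the initial map $u_0$ is produced from $\bar u$ by a standard regularization and rescaling that guarantees the base of the induction.

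The heart of the proof is the stage construction $u_q\mapsto u_{q+1}$, which I would organize as a short sequence of $N$ corrugation substeps preceded by a mollification at an intermediate length scale $\ell_q$. After mollifying $u_q$ to $\tilde u_q$, the defect $h_q := g - \tilde u_q^\sharp e$ is small, positive-definite, and smoother than $u_q$ itself. Crucially, to reach the exponent $\sfrac{1}{5}$ rather than the previously known $\sfrac{1}{7}$, I would execute each stage with only $N=2$ substeps. This relies on the feature specific to dimension two that every positive-definite symmetric bilinear form admits a spectral decomposition into exactly two rank-one tensors; after a small cosmetic perturbation that keeps the two eigenvalues of $h_q$ strictly separated, a partition of unity decomposes $h_q$ on each patch $\Omega_j$ as $h_q|_{\Omega_j} = a_{j,1}^2 \xi_{j,1}\otimes \xi_{j,1} + a_{j,2}^2 \xi_{j,2}\otimes \xi_{j,2}$ with \emph{constant} vectors $\xi_{j,k}$, so that the entire stage reduces to two corrugation substeps.

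Each substep adds one such primitive $a^2\xi\otimes\xi$ at a frequency $\mu_k$, with $\mu_1\mu_2\sim \lambda_{q+1}$, by means of a codimension-one Kuiper corrugation
\[
u \longmapsto u + \frac{1}{\mu}\Bigl( \Gamma_1(x)\sin(\mu\,\xi\cdot x) + \Gamma_2(x)\cos(\mu\,\xi\cdot x)\Bigr),
\]
where $\Gamma_2$ is parallel to the unit normal $\nu$ to $u$ and $\Gamma_1$ is tangential; the amplitudes are prescribed by the Nash algebraic identity so that the added metric equals $a^2\xi\otimes\xi$ modulo an explicit remainder. That remainder splits into a high-frequency commutator of size $\mu^{-1}(\|a\|_1 + \|u\|_{C^2}\|a\|_0)$ and a quadratic term of size $\|a\|_0^2$, each of which has to be absorbed into the next-stage target $\delta_{q+2}$. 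The codimension-one constraint enters through the dependence of the tangential field $\Gamma_1$ on the second fundamental form of $u$, so the $C^2$ bound of the current immersion must be used carefully here.

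The principal obstacle is the simultaneous balancing of the intermediate frequencies $\mu_1,\mu_2$, the mollification scale $\ell_q$, and the stage gain $\lambda_{q+1}/\lambda_q$. With $N=2$ primitive forms per stage, the bookkeeping inequalities close precisely for $\beta<\sfrac{1}{5}$, matching the codimension $\ge 2$ bound. I expect the delicate technical work to concentrate in two places: first, in performing the two-primitive decomposition of $h_q$ globally on $\overline{D}_2$ so that consecutive patches glue together without accumulating errors that dominate $\delta_{q+2}$, which requires a more careful partition-of-unity construction than the classical three-primitive Nash version; and second, in controlling the curvature-induced error from the tangential Kuiper correction, since it is precisely this term that in previous approaches forced the loss of two additional powers and produced only $\sfrac{1}{7}$ instead of $\sfrac{1}{5}$.
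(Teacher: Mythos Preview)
Your plan contains a genuine gap at its central step: the claimed two-primitive decomposition with \emph{constant} directions does not exist. For fixed unit vectors $\xi_1,\xi_2\in\R^2$, the tensors $\xi_1\otimes\xi_1$ and $\xi_2\otimes\xi_2$ span only a $2$-dimensional subspace of the $3$-dimensional space of symmetric $2\times 2$ matrices; a generic value $h_q(x)$ lies outside that plane, so on no open patch $\Omega_j$ can one write $h_q|_{\Omega_j}=a_{j,1}^2\,\xi_{j,1}\otimes\xi_{j,1}+a_{j,2}^2\,\xi_{j,2}\otimes\xi_{j,2}$ with $\xi_{j,k}$ constant. The pointwise spectral decomposition is of course valid, but the eigenvector fields $v_1(x),v_2(x)$ vary with $x$, and there is no reason for them to be gradients; hence they cannot serve as phases $\xi\cdot x$ (or more generally $\psi(x)$) in the corrugation. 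Separating the eigenvalues by a cosmetic perturbation does not help with either issue. If instead you intend to freeze the directions only approximately on shrinking patches and absorb the remainder, you would have to quantify the trade-off between patch size and cutoff derivatives; that is a different argument from what you wrote and it is far from clear that it closes at $\sfrac{1}{5}$.

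The paper obtains the two-primitive decomposition by a genuinely different device, special to dimension two: isothermal (conformal) coordinates. One shows that, since $h_q$ is $C^\alpha$-close to $e$, there exist a diffeomorphism $\Phi=(\Phi_1,\Phi_2)$ and a conformal factor $\rho$ with
\[
h_q=\rho^2\bigl(\nabla\Phi_1\otimes\nabla\Phi_1+\nabla\Phi_2\otimes\nabla\Phi_2\bigr),
\]
so that the two varying directions are automatically gradients and the corrugations can use $\Phi_1,\Phi_2$ themselves as phases. This replaces your partition-of-unity step entirely. The price is that the regularity of $\Phi,\rho$ deteriorates with $q$, which forces two adjustments absent from your outline: the inductive hypothesis must control a H\"older norm $\|g_q-u_q^\sharp e\|_\alpha$ (not merely $\|\cdot\|_0$) so that quantitative Schauder-type bounds on $\Phi,\rho$ are available at each step, and the parameter bookkeeping requires an additional degree of freedom (the paper uses $\delta_q=a^{-b^q}$, $\lambda_q=a^{cb^{q+1}}$ with $c$ slightly above $\sfrac{5}{2}$, together with an auxiliary mollification scale $\ell$ and intermediate frequency $\mu$) rather than the simpler relation $\delta_q=\lambda_q^{-2\beta}$. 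The delicate point is therefore not the curvature term in the tangential Kuiper correction you flag, but the interaction between the conformal-coordinate estimates and the growth of the frequencies.
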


The well-known ground-breaking result of Nash and Kuiper \cite{Nash54,Kuiper55} implies that Theorem \ref{t:main} holds with $C^1$ replacing $C^{1, 1/5-\delta}$. The first extension to the $C^{1,\alpha}$ category was {obtained} by Yu.F.~Borisov: {in \cite{Borisov65} a version of Theorem \ref{t:main}  for $C^{1,1/7-\delta}$ immersions (and embeddings) of $2$-dimensional disks with real analytic metrics $g$ was announced; in fact, more generally, the theorem in \cite{Borisov65} applied to $C^{1, \alpha}$ isometric embeddings of $n$-dimensional balls in $\R^{n+1}$ under the assumption that $\alpha < \frac{1}{1+n+n^2}$.  In \cite{Borisov2004} a detailed proof for the case of $2$-dimensional disks and with exponents $\alpha < \frac{1}{13}$ appeared}. In the paper \cite{CDS} the first and third author jointly with S.~Conti gave {a detailed and self-contained proof of all the statements contained in \cite{Borisov65}} for $C^2$ metrics on $n$-dimensional balls (with the same H\"older exponents) and analogous  generalizations to $C^{1,\beta}$ metrics on compact manifolds without topological restrictions. For the optimal H\"older exponents in the case of rough metrics, which depend on $\beta$, the dimension and the topology of the manifold, we refer to \cite{CDS}. 

The main contribution of this paper {is to be able to raise the optimal H\"older exponent} for $2$-dimensional disks from $\frac{1}{7}$ to $\frac{1}{5}$, by taking advantage of the theory of conformal maps. {The question of the optimal exponent for which an h-principle statement as in Theorem \ref{t:main} can hold is also relevant for rigidity theory, as we will explain below.}

\medskip

It is known that Theorem \ref{t:main} cannot hold for $C^{1,\alpha}$ immersions $u$ when $\alpha>\frac{2}{3}$ and $g$ has positive Gauss curvature: under these assumptions it was shown by Borisov that $u(\overline{D}_1)$ must be (a portion of) a convex surface. This was the outcome of a series of papers, cf. \cite{BorisovRigidity1,BorisovRigidity2},
and an alternative shorter proof has been given in \cite{CDS}. Borisov's theorem extends the classical rigidity result for the Weyl problem: if $(S^2,g)$ is a 
compact Riemannian surface with positive Gauss curvature and $u\in C^2$ is an isometric immersion into $\R^3$, then $u$ is uniquely determined up to a rigid motion (\cite{CohnVossen,Herglotz}, see also \cite{Spivak5} for a thorough discussion). 

The technique used to prove approximation results as in Theorem \ref{t:main} follows an iteration scheme called convex integration. The latter was developed by Gromov \cite{Gromov73,Gromov86} into a very powerful tool to prove $h$-principle statements in a wide variety of geometric problems (see also \cite{Eliashberg,Spring}). In general the regularity of solutions obtained via convex integration agrees with the highest derivatives appearing in the equations (see \cite{SpringRegularity}). Thus, an interesting question raised in \cite{Gromov86} p219 is how one could extend the methods to produce more regular solutions. 
Essentially the same question, in the case of isometric embeddings,
is also mentioned in \cite{Yau}, see Problem 27. In particular, it is tempting to imagine the existence of a threshold $\alpha_0$ so that:
\begin{itemize}
\item the $h$-principle holds for isometric $C^{1, \alpha}$ immersions of $2$-dimensional disks in $\R^3$ whenever $\alpha<\alpha_0$;
\item rigidity holds for $C^{1, \alpha}$ immersions of positively curved $2$-dimensional disks in $\R^3$ whenever $\alpha > \alpha_0$.
\end{itemize}
Hence a summary of our current knowledge is that, if such a threshold $\alpha_0$ exists, then it must lie in the interval $[\frac{1}{5}, \frac{2}{3}]$. 

\medskip

Starting with the work \cite{DS}, the first and third author pointed out a surprising similarity between the latter question and a long-standing conjecture in the theory of turbulence: in \cite{Onsager} Onsager conjectured the existence of a threshold H\"older regularity discriminating the validity of the conservation of kinetic energy for weak solutions of the incompressible Euler equations. The ``rigidity'' part of Onsager's conjecture was established by Eyink and Constantin, E and Titi in the papers \cite{Eyink} and \cite{CET}. The paper \cite{DS2} gave the first proof of the existence of continuous solutions that violate the conservation of total kinetic energy. A series of subsequent works \cite{DS3, Isett, BDS, Buckmaster, BDIS, BDS2} have made a quite substantial progress in settling Onsager's conjecture.

\medskip

Theorem \ref{t:main} could be improved in several directions. In particular, with little additional technicalities, which we believe to be of secondary importance, we will also show the following

\begin{theorem}\label{t:main2}
Let $g$ be a $C^2$ metric on $\overline{D}_1$ and $\bar u\in C^1 (\overline{D}_1, \R^3)$ a short immersion. For every $\delta>0$ and $\varepsilon > 0$ there is a $C^{1, \sfrac{1}{5}-\delta}$ isometric immersion $u$ of $(\overline{D}_1, g)$ in $\R^3$ such that $\|\bar u-u\|_{C^0} < \varepsilon$. If in addition $\bar u$ is an embedding, then $u$ can be chosen to be an embedding.
\end{theorem}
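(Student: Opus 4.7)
The plan is to reduce Theorem \ref{t:main2} to Theorem \ref{t:main} via an extension argument, and then to handle the embedding statement by a quantitative bi-Lipschitz bound propagated through the convex integration iteration.

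To enlarge the domain from $\overline{D}_1$ to $\overline{D}_2$, I would extend $g$ to a $C^2$ symmetric $(0,2)$-tensor $\tilde g$ on $\overline{D}_2$ via Whitney extension, and extend $\bar u$ to $\tilde u \in C^1(\overline{D}_2, \R^3)$ via any standard $C^1$ extension operator. On the annulus $\overline{D}_2 \setminus \overline{D}_1$ the extended map $\tilde u$ need not be an immersion or remain short with respect to $\tilde g$, but both defects can be repaired by adding to $\tilde g$ a smooth nonnegative symmetric tensor supported inside $D_2 \setminus \overline{D}_1$ and sufficiently large there (as a quadratic form) to dominate $\tilde u^\sharp e$ plus a small positive multiple of the identity. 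The modified tensor is still $C^2$, coincides with $g$ on $\overline{D}_1$, is positive definite on $\overline{D}_2$, and makes $\tilde u$ a strict short immersion. Applying Theorem \ref{t:main} to the extended data $(\tilde g,\tilde u)$ then yields the desired $C^{1,\sfrac{1}{5}-\delta}$ isometric immersion of $(\overline{D}_1, g)$ with $\|\bar u - u\|_{C^0(\overline{D}_1)} < \varepsilon$.

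To preserve embeddings, first extend $\bar u$ to an embedding of $\overline{D}_2$, which is possible because $\bar u(\overline{D}_1)$ is compact in $\R^3$ and the annulus is two-dimensional, leaving ample room to route the extension injectively. The subtlety is that the convex integration scheme converges only in $C^0$ on top of the initial map, whereas openness of the embedding condition on compact domains requires $C^1$-closeness among immersions. The remedy, going back to Nash, is to track a quantitative bi-Lipschitz constant $\rho_k := \inf_{x\neq y}|u_k(x)-u_k(y)|/|x-y|$ through each stage $u_k$ of the iteration. Each stage is a high-frequency, small-amplitude perturbation whose $C^0$ norm is summable; inductively one arranges the amplitude at stage $k$ to be small compared to $\rho_{k-1}$, so that $\rho_k$ remains bounded below by a uniform $\rho_\infty>0$. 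Passing to the $C^0$ limit, $|u(x)-u(y)| \geq \rho_\infty |x-y|$, so $u$ is injective, hence an embedding.

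The main obstacle is verifying this quantitative bi-Lipschitz control through the specific convex integration scheme that pushes the H\"older exponent up to $\sfrac{1}{5}-\delta$: the conformal stages introduced in this paper carry additional twist-type corrections whose effect on pairs of points must be controlled. However, given the scale separation $\delta_{k+1}^{\sfrac{1}{2}} \ll \lambda_{k+1}^{-1}$ already built into the scheme to achieve the H\"older bound, the amplitude of each correction can be made small relative to $\rho_{k-1}$ by tuning the stage parameters by constants that do not affect the final H\"older exponent. This is the reason why the refinement is of a secondary, technical nature.
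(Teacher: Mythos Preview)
Your extension argument for the first statement is correct and essentially identical to the paper's: extend $g$ and $\bar u$ to $\overline{D}_2$, then inflate the metric on the annulus by a smooth nonnegative tensor to restore strict shortness, and invoke Theorem~\ref{t:main}.

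For the embedding part, your approach is workable in principle but rests on an oversight that makes it unnecessarily involved. You write that ``the convex integration scheme converges only in $C^0$ on top of the initial map''. This is not so for the main iteration of Proposition~\ref{p:main}: estimate \eqref{e:u_1_q+1} gives $\|D(u_{q+1}-u_q)\|_0\leq C_0\delta_{q+1}^{1/2}$, so the scheme converges in $C^1$, and moreover the total $C^1$-drift $\sum_q C_0\delta_{q+1}^{1/2}$ can be made as small as one wishes by starting the iteration with a sufficiently small $\delta_1$. The paper exploits this by splitting the construction: first finitely many classical Nash--Kuiper steps carry the given embedding $\bar u$ to a smooth embedding $\underline u$ with $\|g-(\underline u^\sharp e+2\eta e)\|_0\leq \bar c\,\eta$ for a prescribed small $\eta$ (each of these finitely many steps preserves global injectivity, see Remark~\ref{r:embedding}); then the main iteration produces the isometric $u$ with $\|Du-D\underline u\|_0\leq C_0\eta^{1/2}$ and $\|u-\underline u\|_0$ small. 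Local injectivity of $u$ then follows directly from the isometry $u^\sharp e=g\geq 16\omega^2 e$ together with the $C^1$-closeness to $\underline u$, and global injectivity from $C^0$-closeness and compactness. No stage-by-stage bi-Lipschitz bookkeeping through the infinite iteration is needed.

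Your proposed route---propagating a uniform lower bound on $\rho_k=\inf_{x\neq y}|u_k(x)-u_k(y)|/|x-y|$ through every step of the main iteration---would require verifying, at each of the two conformal perturbation substeps, an analogue of the argument in Remark~\ref{r:embedding} (local injectivity needs more than small $C^0$ amplitude: one must check that the oscillatory part of the derivative does not destroy the lower bound on $|Du_k\,e|$). This can likely be carried out, but it is genuine extra work that the paper's $C^1$-convergence argument sidesteps entirely.
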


\section{Main iteration}

Theorem \ref{t:main} is achieved via an iteration, which depends upon several parameters. We start introducing the main ones.
The first parameter $\alpha>0$ is an exponent, which is assumed to be rather small, in fact smaller
than a geometric constant:
\begin{equation}\label{e:alpha}
0<\alpha < \alpha_0\, .
\end{equation}
Two further exponents will be called $c$ and $b$, both assumed to be larger than $1$, and a basis $a$, assumed to be very large. We then define the parameters
\begin{equation}\label{e:delta_lambda}
\delta_q := a^{-b^q}\qquad \lambda_q:= a^{cb^{q+1}}\, ,
\end{equation}
where $q$ is an arbitrary natural number. $b$ can in fact be chosen rather close to $1$: how much it is allowed to be close to $1$ depends on how close is $\alpha$ to $0$. $c$ will be larger but rather close to $\frac{5}{2}$, depending on how close are $b-1$ and $\alpha$ to $0$. More precisely, we summarize the conditions which $b$ and $c$ need to satisfy in the following two inequalities
\begin{align}
 \frac{3}{2} >\; & b >\frac{2}{(2-\alpha)(1-2\alpha)}\label{e:abc_1}\\
&c > \frac{2(2-\alpha) b^2 - (3-2\alpha) b -1}{b((2-\alpha) (1-2\alpha) b -2)}  = \frac{((4-2\alpha) b +1) (b-1)}{b ((2-5\alpha + 2\alpha^2) b -2)}\, .\label{e:abc_2}
\end{align}
It is moreover convenient to introduce the notation
\begin{equation}\label{e:g_q}
g_q := g - \delta_{q+1} e\, ,
\end{equation}
which simplifies several formulas.

\begin{proposition}\label{p:main}
Fix a metric $g$ as in Theorem \ref{t:main}. There is a positive constant $\alpha_0$ such that for every $\alpha$ as in \eqref{e:alpha} 
we can choose positive numbers $\sigma_0 (\alpha) < 1$ and $C_0$ with the following property. Assume $b$ and $c$ satisfy \eqref{e:abc_1} and \eqref{e:abc_2}, fix any $\bar{C}\geq C_0$ and assume that $\lambda_q$ and $\delta_q$ are defined as in \eqref{e:delta_lambda}, where $a$ is sufficiently large depending on $\alpha, b,c,g,\bar{C}$, namely
\begin{align}
& a > a_0 (\alpha, b, c, g, \bar C)\, .\label{e:a_pretty_large}
\end{align}
If $q\in \N$ and $u_q: \overline{D}_{1+2^{-q-1}} \to \R^3$ is an immersion such that
\begin{align}
&\|g_q - u^\sharp_q e\|_\alpha \leq \sigma_0 \delta_{q+1} \label{e:me_0_alpha_q}\\
&\|D^2 u_q\|_{0} \leq \bar C \delta_q^{\sfrac{1}{2}} \lambda_q\, ,\label{e:Du_2_q}
\end{align}
then there is an immersion $u_{q+1}: \overline{D}_{1+2^{-q-2}} \to \R^3$ such that
\begin{align}
&\|g_{q+1} - u^\sharp_{q+1} e\|_0 \leq \frac{\sigma_0}{3} \delta_{q+2} \lambda_{q+1}^{-\alpha}\label{e:me_0_q+1}\\
&\|D (g_{q+1}- u^\sharp_{q+1} e)\|_0 \leq \frac{\sigma_0}{3} \delta_{q+2} \lambda_{q+1}^{1-\alpha}\label{e:me_1_q+1}\\
&\|u_q-u_{q+1}\|_0 \leq \delta_{q+1}^{\sfrac{1}{2}} \lambda_{q+1}^{-\gamma}\label{e:u_0_q+1}\\
&\|D(u_q-u_{q+1})\|_0 \leq C_0 \delta_{q+1}^{\sfrac{1}{2}}\label{e:u_1_q+1}\\
&\|D^2 u_{q+1}\|_0 \leq \bar C \delta_{q+1}^{\sfrac{1}{2}} \lambda_{q+1}\label{e:u_2_q+1}\, ,
\end{align}
where $\gamma = \gamma (\alpha, b,c)>0$. 
\end{proposition}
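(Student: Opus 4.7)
The plan is to execute one convex-integration stage whose main novelty over \cite{CDS} is the use of two-dimensional conformal geometry to reduce the number of primitive rank-one perturbations per stage from three to two. Heuristically, an iteration built from $N$ successive one-dimensional Nash-spiral corrections at geometrically spaced frequencies closes up to the H\"older exponent $\alpha<1/(1+2N)$; the value $N=3$ needed to decompose a generic symmetric $2\times 2$ tensor gives $1/7$ as in \cite{CDS}, while the choice $N=2$ pursued here gives $1/5$, consistent with the regime $b\to 1$, $c\to \sfrac{5}{2}$ permitted by \eqref{e:abc_1}--\eqref{e:abc_2}.

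\textbf{Mollify and uniformize.} First, mollify $u_q$ at a scale $\ell$ slightly larger than $\lambda_{q+1}^{-1}$ to obtain a smooth $\tilde u_q$; combining \eqref{e:me_0_alpha_q}--\eqref{e:Du_2_q} with standard commutator estimates yields bounds on all derivatives of $\tilde u_q$ and shows that the gap to be filled,
\begin{equation*}
h := (g_q - \tilde u_q^\sharp e) + (\delta_{q+1}-\delta_{q+2})\,e,
\end{equation*}
is positive definite of size $\sim \delta_{q+1}$ with controlled H\"older norms. Second, and this is the key geometric point, solve the Beltrami equation associated to $h$ to produce a diffeomorphism $\Phi\colon \overline{D}_{1+2^{-q-2}}\to \overline{D}_{1+2^{-q-1}}$ and a function $\phi$ with
\begin{equation*}
\Phi^\sharp h \;=\; e^{2\phi}\bigl(e_1\otimes e_1 + e_2\otimes e_2\bigr),
\end{equation*}
so that $h$ becomes a conformal multiple of the Euclidean metric in the new coordinates. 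Schauder theory for Beltrami gives $C^{2,\alpha}$-control of $\Phi, \Phi^{-1}$ and $C^{1,\alpha}$-control of $\phi$ in terms of the H\"older norms of $h$; the crucial gain is that $h$ is now a sum of only \emph{two} primitive rank-one tensors.

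\textbf{Two-stage Nash spiral.} Apply two successive Nash-style perturbations to $\tilde u_q\circ \Phi$ along the directions $e_1$ and $e_2$, at frequencies $\mu_1$ and $\mu_2=\lambda_{q+1}$ with amplitudes $\sim e^{\phi}/\mu_i$, each engineered to cancel one primitive summand $e^{2\phi}\, e_i\otimes e_i$; define $u_{q+1}$ by pushing the result back by $\Phi^{-1}$. With the intermediate frequency chosen close to $\mu_1\sim (\lambda_q\lambda_{q+1})^{1/2}$, an accounting of mollification commutators, transport by $\Phi$, and leading-order spiral residuals produces \eqref{e:me_0_q+1}--\eqref{e:me_1_q+1}, while the displacement and higher-derivative bounds \eqref{e:u_0_q+1}--\eqref{e:u_2_q+1} follow from the standard Nash-spiral estimates.

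\textbf{Main obstacle.} The delicate point is controlling the H\"older norms of $\Phi$, $\Phi^{-1}$ and $\phi$ at the correct scale: the gap $h$ carries derivatives inherited from $u_q$ at frequencies up to $\sim\lambda_q$, and these are transported into $\Phi$ by Beltrami regularity, so the subsequent spirals at frequencies $\mu_1, \lambda_{q+1}$ must not be spoiled by the coordinate-induced oscillation. Making the numerology close up is exactly the content of \eqref{e:abc_1}--\eqref{e:abc_2}, forcing $b$ close to $1$ and $c$ close to $\sfrac{5}{2}$. A secondary issue is that a single global isothermal chart may not cover all of $\overline{D}_{1+2^{-q-1}}$; the linear shrinking of the domain, combined if needed with a finite partition of unity, provides the room to patch local charts.
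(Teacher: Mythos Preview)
Your sketch captures the paper's argument: mollify, conformally diagonalize the metric defect into two rank-one pieces via isothermal coordinates, then run two Nash--Kuiper spirals. The execution in the paper differs from yours in a few places worth flagging. First, rather than composing with $\Phi$ and pushing back by $\Phi^{-1}$, the paper stays in the original coordinates and writes $h_q=\rho^2(\nabla\Phi_1\otimes\nabla\Phi_1+\nabla\Phi_2\otimes\nabla\Phi_2)$, using $\mu\Phi_1(x)$ and $\lambda_{q+1}\Phi_2(x)$ directly as the spiral phases; this sidesteps any estimates on $\Phi^{-1}$. Second, no partition of unity is ever needed: the smallness $\|h_q-e\|_\alpha\le 3\sigma_0$ built into the induction already guarantees, via Proposition~\ref{p:conformal}, a single global isothermal chart on the full disk. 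Third, the mollification length is not taken near $\lambda_{q+1}^{-1}$ but as large as the quadratic commutator estimate permits, $\ell^{2-\alpha}\sim\delta_{q+1}/(\delta_q\lambda_q^2)$, which in the relevant regime puts $\ell^{-1}$ close to $\lambda_q$; correspondingly the intermediate frequency $\mu$ is fixed by forcing the first-spiral error $\|E_1\|_0\sim\delta_{q+1}/(\ell\mu)$ down to the target $\delta_{q+2}\lambda_{q+1}^{-\alpha}$ rather than by a geometric-mean heuristic, and the compatibility $\mu\le\lambda_{q+1}$ is precisely the parameter inequality~\eqref{e:key_parameters}, which is where \eqref{e:abc_1}--\eqref{e:abc_2} enter. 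None of these points changes the substance of your plan.
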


As already mentioned, Proposition \ref{p:main} will be used in an iteration scheme to show Theorem \ref{t:main}.
The reader will notice that the starting assumption \eqref{e:me_0_alpha_q} does not exactly match the conclusions
\eqref{e:me_0_q+1}-\eqref{e:me_1_q+1}. On the other hand, a simple interpolation shows that \eqref{e:me_0_q+1} and \eqref{e:me_1_q+1} together imply the estimate 
\[
\|g_{q+1}-u_{q+1}^\sharp e\|_\alpha \leq \sigma_0 \delta_{q+2}\, ,
\] 
which corresponds to \eqref{e:me_0_alpha_q} at the next step of the iteration. It is possible to state a version of Proposition \ref{p:main} where the assumptions and conclusions look more homogeneous, but there would be no real simplification neither in the statement nor in the proof.

\medskip

Observe that, by our condition upon the parameters, $u_q$ is obviously a short map, because we have
\[
u_q^\sharp e \leq g_q  + \sigma_0 \delta_{q+1} e = g - (1-\sigma_0) \delta_{q+1} e < g\, ,
\]
where all the inequalities are understood in the sense of quadratic forms. Thus, as a simple corollary we know that
\begin{equation}\label{e:uniform_C1}
\|Du_q\|_{C^0} \leq C 
\end{equation}
for some constant $C$ which only depends upon $g$. 

\medskip

As in the Nash-Kuiper classical theorem, the map $u_{q+1}$ is obtained from the map $u_q$ by adding a certain number of perturbations, each consisting of highly oscillatory functions.
As it is clear from the arguments in \cite{CDS}, the threshold H\"older exponent that can be reached by a Nash-Kuiper type iteration is $\frac{1}{1+2 n_\star}$, where $n_\star$ is the number of such perturbations. Each perturbation adds, modulo small error terms, a smooth symmetric rank-$1$ tensor, called ``primitive metric'', to $u_q^\sharp e$.
$n_\star$ is then the smallest number of summands needed to write the metric error $g - u_q^\sharp e$ as a (positive) linear combination of such ``primitive metrics''. 

We know by the inductive assumption that $(g- u_q^\sharp e)/\|g-u_q^\sharp e\|_0$ is close to $e$, which implies that $n_\star$ can be chosen to be the dimension of the space of symmetric matrices. Thus, if $n$ is the dimension of the manifold, $n_\star = \frac{n (n+1)}{2}$: this explains the threshold $\frac{1}{1+2n_\star} = \frac{1}{1+n+n^2}$ reached in \cite{CDS} and claimed originally by Borisov. In particular in dimension $2$ the number $n_\star$ equals $3$ and Borisov's threshold is $\frac{1}{7}$. 

\medskip

The starting point of this paper is the simple observation that in $2$ dimensions we can use a conformal change of coordinates to diagonalize $g- u^\sharp e$ and hence reduce the number $n_\star$ from $3$ to $2$: this justifies the new threshold $\frac{1}{5}$. However, the regularity of the change of coordinates needed to implement this idea deteriorates with $q$ and thus it is not at all clear that the method really improves the regularity of the final map. In fact at first it is not even clear that the new iteration scheme yields any $C^{1,\alpha}$ regularity at all. 

In order to overcome this difficulty we obviously need to estimate quite carefully several norms of the conformal change of coordinates, at each step: for this reason we need to keep track of some H\"older norm of $g- u_q^\sharp e$. However, to ensure convergence of the scheme, it does not seem enough to just combine the computations of \cite{CDS} with the classical estimates on conformal mappings. In particular in order to close the argument we impose a much faster rate of convergence for $g-u_q^\sharp e$: in \cite{CDS} it was sufficient to choose exponentially decaying $\delta_q$ (and exponentially growing $\lambda_q$), whereas in this note we take advantage of a double exponential Ansatz. This idea is in fact borrowed from \cite{DS3}, where a scheme with a double exponential decay was used to produce H\"older solutions to the Euler equations.

\medskip

The rest of the paper is organized as follows.

Section \ref{s:preliminaries} collects the technical preliminary lemmas and propositions which will be used in the proofs of Proposition \ref{p:main} and Theorem \ref{t:main}.

The proof of Proposition \ref{p:main} is split into the Sections \ref{s:iter_start}, \ref{s:est_1}, \ref{s:est_2} and \ref{s:iter_conclusion}.  Section \ref{s:iter_start} describes how to reach $u_{q+1}$ from $u_q$ and in particular it gives the precise formulas for the two oscillatory perturbations which we need to add. We will then collect in Section \ref{s:est_1} the estimates concerning the first perturbation and in Section \ref{s:est_2} the ones concerning the second perturbation. Section \ref{s:iter_conclusion} will finally conclude the proof of Proposition \ref{p:main}. 

Section \ref{s:main} will prove Theorem \ref{t:main} using Proposition \ref{p:main}. In fact the proof is not completely straightforward since we have to show the existence of a map $u_0$ which is $C^0$ close to the map $\bar u$ of Theorem \ref{t:main} and at the same time satisfies the requirements of Proposition \ref{p:main} (with $q=0$), in order to be able to start the iterative procedure. 
In Section \ref{s:main2} we give briefly the necessary technical modifications to prove Theorem \ref{t:main2}.

One key technical point is Proposition \ref{p:conformal}, which addresses rather well-known regularity properties of conformal changes of coordinates. However, it is crucial for us to have an explicit (linear) dependence of certain H\"older norms of the change of coordinates in terms of corresponding norms of the metric. Since we have not been able to find the relevant statements in the literature, we have included a proof of Proposition \ref{p:conformal} in the Appendix.

\subsection{Acknowledgments} The research of Camillo De Lellis and Dominik Inauen has been supported by the grant $200021\_159403$ of the Swiss National Foundation. L\'aszl\'o Sz\'ekelyhidi acknowledges the support of the ERC Grant Agreement No. 277993.

\section{Preliminaries}\label{s:preliminaries}

\subsection{H\"older spaces} In the following $m\in \N$, $\alpha\in (0,1)$, and $\beta$ is a multi-index. Moreover we will always assume that the domain of definition of any map is a disk $\overline{D}_r\subset \R^2$ with radius $r\in [1,2]$. The maps $f$ can be real-valued, vector-valued, matrix-valued or generally tensor-valued. In all these cases we endow the targets with the standard Euclidean norms, for which we will use the notation $|f (x)|$. 
We introduce the usual H\"older norms as follows.
First of all, the supremum norm is denoted by $\|f\|_0:=\sup |f|$.  We define the H\"older seminorms 
as
\begin{equation*}
\begin{split}
[f]_{m}&=\max_{|\beta|=m}\|D^{\beta}f\|_0\, ,\\
[f]_{m+\alpha} &= \max_{|\beta|=m}\sup_{x\neq y}\frac{|D^{\beta}f(x)-D^{\beta}f(y)|}{|x-y|^{\alpha}}\, .
\end{split}
\end{equation*}
The H\"older norms are then given by
\begin{eqnarray*}
\|f\|_{m}&=&\sum_{j=0}^m[f]_j\, ,\\
\|f\|_{m+\alpha}&=&\|f\|_m+[f]_{m+\alpha}.
\end{eqnarray*}
We then recall the standard ``Leibniz rule'' to estimate norms of products
\begin{equation}\label{e:Holderproduct}
[fg]_{r}\leq C\bigl([f]_r\|g\|_0+\|f\|_0[g]_r\bigr) \qquad \mbox{for any $1\geq r\geq 0$}
\end{equation}
and the usual interpolation inequalities
\begin{equation}\label{e:Holderinterpolation2}
[f]_{s}\leq C\|f\|_0^{1-\frac{s}{r}}[f]_{r}^{\frac{s}{r}} \qquad \mbox{for all $r\geq s\geq 0$}.
\end{equation}
The following version of estimate \eqref{e:Holderinterpolation2}, with explicit constant, will be useful at a certain stage:
\begin{equation}\label{e:interpolation_explicit}
\|f\|_\alpha \leq \|f\|_0 + 2 \|f\|_0^{1-\alpha}\|Df\|_0^\alpha \qquad \mbox{for all $0\leq \alpha \leq 1$.}
\end{equation}

We also collect two classical estimates on the H\"older norms of compositions. These are also standard, for instance
in applications of the Nash-Moser iteration technique.

\begin{proposition}\label{p:chain}
Let $0\leq\alpha<1$, $\Psi: \Omega \to \mathbb R$ and $u: \R^n\supset U \to \Omega$ be two $C^{m,\alpha}$ functions, with $\Omega\subset \R^N$. 
Then there is a constant $C$ (depending only on $\alpha$, $m$,
$\Omega$ and $U$) such that
\begin{align}
\left[\Psi\circ u\right]_{m+\alpha}&\leq C[u]_{m+\alpha}\left( [\Psi]_1+\|u\|_0^{m-1}[\Psi]_m\right)\nonumber\\
				      &+C[\Psi]_{m+\alpha}\left(\|u\|_0^{m-1}[u]_m\right)^{\frac{m+\alpha}{m}}\label{e:chain3}\, ,\\
\left[\Psi\circ u\right]_{m+\alpha} &\leq C\left([u]_{m+\alpha}[\Psi]_1+[u]_1^{m+\alpha}[\Psi]_{m+\alpha}\right)\label{e:chain4}\, .
\end{align} 
Let $f, g: \R^n\supset U \to \R$ two $C^{m,\alpha}$ functions. Then there is a constant $C$ (depending only on $\alpha$, $m$, $n$ and $U$) such that
\begin{equation}\label{e:product}
[fg]_{m+\alpha} \leq C(\|f\|_0 [g]_{m+\alpha} + \|g\|_0 [f]_{m+\alpha})\, .
\end{equation}
\end{proposition}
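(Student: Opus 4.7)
All three estimates follow from differentiating $m$ times with the Leibniz (resp. Faà di Bruno) formula and then collapsing the resulting intermediate Hölder norms via the interpolation inequality \eqref{e:Holderinterpolation2}. I would treat \eqref{e:product} first because the combinatorics is the simplest, and then deduce the two chain rules either by induction on $m$ or directly from Faà di Bruno.

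For the product bound \eqref{e:product}, apply the Leibniz rule
\[
D^\beta(fg)=\sum_{\gamma\le\beta}\binom{\beta}{\gamma}D^\gamma f\cdot D^{\beta-\gamma}g,\qquad|\beta|=m,
\]
and use \eqref{e:Holderproduct} at order $r=\alpha$ on each summand. This reduces the problem to bounding quantities of the form $[f]_{k+\alpha}[g]_{m-k}$ and $[f]_k[g]_{m-k+\alpha}$ for $0\le k\le m$. Interpolating each factor between $\|f\|_0$, $[f]_{m+\alpha}$ (respectively between $\|g\|_0$, $[g]_{m+\alpha}$) via \eqref{e:Holderinterpolation2} and applying Young's inequality with matched conjugate exponents collapses every mixed term into the desired $\|f\|_0[g]_{m+\alpha}+\|g\|_0[f]_{m+\alpha}$.

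For the two chain rules I would proceed by induction on $m$, with base case $m=0$ given by the Lipschitz composition bound $[\Psi\circ u]_\alpha\le[\Psi]_1[u]_1^\alpha$; the inductive step differentiates once, writes $D(\Psi\circ u)=(D\Psi)(u)\cdot Du$, and combines the just-established product estimate at order $(m-1)+\alpha$ with the inductive hypothesis on $(D\Psi)(u)$. Equivalently and more transparently, one can argue directly from Faà di Bruno's formula
\[
D^\beta(\Psi\circ u)=\sum_\pi c_\pi\,(D^{|\pi|}\Psi)(u)\prod_{B\in\pi}D^{|B|}u,
\]
estimate each summand in Hölder-$\alpha$ termwise (placing the extra $\alpha$-regularity either on $(D^{|\pi|}\Psi)(u)$, using $[(D^k\Psi)(u)]_\alpha\le[\Psi]_{k+\alpha}[u]_1^\alpha$, or on one factor $D^{|B|}u$, upgrading it to $[u]_{|B|+\alpha}$), and then interpolate the remaining factors. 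To close \eqref{e:chain3} I would interpolate each $[u]_{|B|}$ between $\|u\|_0$ and $[u]_m$: for a partition into $k$ blocks of sizes $a_1,\dots,a_k$ with $\sum a_i=m$, this gives $\prod_i[u]_{a_i}\le C\|u\|_0^{k-1}[u]_m$, and Young's inequality then extracts exactly the two terms appearing in \eqref{e:chain3}. For \eqref{e:chain4} the same telescoping is performed, but every intermediate Hölder norm is interpolated between the two extreme seminorms of order $1$ and $m+\alpha$ rather than between order $0$ and $m$.

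The main bookkeeping obstacle is to verify that for every partition arising in Faà di Bruno's formula the exponents of the interpolations sum exactly to the homogeneity required on the right-hand sides of \eqref{e:chain3} and \eqref{e:chain4}, so that Young's inequality applies cleanly without leaving residual mixed terms. I would handle this by recording the partition data $(k;a_1,\ldots,a_k)$ and checking that $\sum_i(1-a_i/m)=k-1$ and $\sum_i(a_i/m)=1$ in the \eqref{e:chain3} case, with the analogous identity in the \eqref{e:chain4} case; since the number of partitions of $m$ is finite and depends only on $m$, summing the resulting contributions yields a constant $C$ depending only on $\alpha$, $m$, $\Omega$ and $U$, as claimed.
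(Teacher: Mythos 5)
Your proposal matches the paper's argument: the paper proves \eqref{e:chain3} and \eqref{e:chain4} by writing out the Fa\`a di Bruno expansion \eqref{e:chain2} and then applying the Leibniz rule \eqref{e:Holderproduct} together with repeated interpolation \eqref{e:Holderinterpolation2}, and it dispatches \eqref{e:product} by exactly the Leibniz--interpolation--Young route you describe. One small slip: the base case you quote, $[\Psi\circ u]_\alpha\leq[\Psi]_1[u]_1^\alpha$, should read $[\Psi\circ u]_\alpha\leq[\Psi]_1[u]_\alpha$ (or $[\Psi]_\alpha[u]_1^\alpha$); this does not affect the Fa\`a di Bruno route, which is the one the paper actually follows.
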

\begin{proof}
The chain rule can be written as 
\begin{equation}\label{e:chain2}
    D^m\left(\Psi \circ u \right)= \sum\limits_{i=1}^m\left(D^i\Psi\circ u\right)\sum\limits_{k} C_{i,k} \left(Du \right)^{k_1}\cdot\dots\cdot\left( D^mu \right)^{k_m}\, ,
     \end{equation}
  where $C_{i,k}$ are constants and $k=\left( k_1,\dots,k_m \right)$ is a multi-index with 
    \begin{equation*}
      \sum k_j=i, \quad \sum j k_j = m\,.
    \end{equation*}
The claim then follows by the Leibniz rule \eqref{e:Holderproduct} and a repeated application of the interpolation inequalities \eqref{e:Holderinterpolation2} to \eqref{e:chain2}. Statement \eqref{e:product} is a 
straightforward consequence of the usual Leibniz rule, interpolation and the Young inequality.
\end{proof}
\begin{remark}
 Observe that if $\alpha=0$ we have the estimates
  \begin{align}
   [\Psi\circ u]_m &\leq C[u]_m\left([\Psi]_1+\|u\|_0^{m-1}[\Psi]_m  \right) \label{e:chain0},\\
   [\Psi\circ u]_m &\leq C\left( [u]_m[\Psi]_1+[u]_1^{m}[\Psi]_m \right) \label{e:chain1}.
  \end{align}
\end{remark}

\subsection{Quadratic mollification estimate} 
We will often use regularizations of maps $f$ by convolution with a standard mollifier $\varphi_\ell (y) := \ell^{-2} \varphi (\frac{y}{\ell})$, where $\varphi\in C^\infty_c (D_1)$ is assumed to have integral $1$ and to be non negative and rotationally symmetric. 
Since however the domain of $f$ will be $D_r$ (resp. $\overline{D}_r$), we fix the convention that the convolution $f*\varphi_\ell$ is defined in $D_{r-\ell}$ (resp. $\overline{D}_{r-\ell}$).  

\begin{lemma}\label{l:mollify}
For any $r,s\geq 0$ and $0<\alpha\leq 1$ we have
\begin{align}
&[f*\varphi_\ell]_{r+s}\leq C\ell^{-s}[f]_r,\label{e:mollify1}\\
&[f-f*\varphi_\ell]_r \leq C\ell^{2}[f]_{2+r},\label{e:mollify2}\\
&\|f-f*\varphi_\ell\|_r \leq C\ell^{2-r}[f]_{2}, \qquad \mbox{if $0\leq r\leq2$} \label{e:mollify4}\\
&\|(fg)*\varphi_\ell-(f*\varphi_\ell)(g*\varphi_\ell)\|_r\leq C\ell^{2\alpha -r}\|f\|_\alpha\|g\|_\alpha\, ,\label{e:mollify3}
\end{align}
where the constants $C$ depend only upon $s$, $r$, $\alpha$ and $\varphi$. 
\end{lemma}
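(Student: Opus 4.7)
The plan is to prove each of the four estimates using the basic moment identities $\int\varphi_\ell = 1$ and $\int y\varphi_\ell(y)\,dy = 0$ (the second from the rotational symmetry of $\varphi$), combined with the kernel scaling $\|D^s\varphi_\ell\|_{L^1}\leq C\ell^{-s}$. For \eqref{e:mollify1} I would write $D^{r+s}(f*\varphi_\ell) = (D^r f)*(D^s\varphi_\ell)$ whenever $r$ and $s$ are integers, so that Young's inequality gives the bound at once. For fractional indices, one distributes the integer parts of the differentiation between $f$ and $\varphi_\ell$ and estimates the resulting difference quotients in $x$ using the Hölder continuity of $D^{\lfloor r\rfloor} f$; the $\ell^{-s}$ factor always comes from the $L^1$ norm of a derivative of $\varphi_\ell$.

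For \eqref{e:mollify2} and \eqref{e:mollify4}, I would start from the Taylor identity
\[
f(x) - (f*\varphi_\ell)(x) = -\int\!\!\int_0^1 (1-t)\,y^T D^2f(x-ty)y\, dt\,\varphi_\ell(y)\,dy,
\]
obtained by expanding $f(x-y)$ to second order, using $\int \varphi_\ell = 1$ to cancel the zeroth-order term and $\int y\varphi_\ell = 0$ to kill the linear term. Taking absolute values gives $\|f-f*\varphi_\ell\|_0\leq C\ell^2[f]_2$. Differentiating $r$ times in $x$ (integer $r$), respectively taking difference quotients of the resulting formula, yields $[f-f*\varphi_\ell]_r\leq C\ell^2[f]_{2+r}$, proving \eqref{e:mollify2}. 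Combining the $r=0$ bound with the trivial $\|D^2(f-f*\varphi_\ell)\|_0\leq 2[f]_2$ via the interpolation inequality \eqref{e:Holderinterpolation2} produces \eqref{e:mollify4} for every $0\leq r\leq 2$.

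The main work lies in \eqref{e:mollify3}. The starting point is the identity
\[
(fg)*\varphi_\ell(x) - f_\ell(x)g_\ell(x) = \int \bigl(f(x-y)-f_\ell(x)\bigr)\bigl(g(x-y)-g_\ell(x)\bigr)\varphi_\ell(y)\,dy,
\]
verified by expanding the right-hand side and using $\int\varphi_\ell=1$. Since $|f(x-y)-f_\ell(x)|\leq C\ell^\alpha[f]_\alpha$ on the support $|y|\leq\ell$ (by averaging the Hölder estimate for $f$), and similarly for $g$, the $C^0$ bound $\|E\|_0\leq C\ell^{2\alpha}\|f\|_\alpha\|g\|_\alpha$ follows, where $E := (fg)_\ell - f_\ell g_\ell$. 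The main obstacle is extending this to $r\geq 1$ without assuming any regularity of $f, g$ beyond $C^\alpha$: since derivatives cannot be passed directly through the integral, I would shift them onto $\varphi_\ell$ and exploit $\int D^k\varphi_\ell = 0$ for $k\geq 1$ to rewrite $D E(x)$ as
\[
\int FG\,D\varphi_\ell\,dy + (f(x)-f_\ell(x))\int G\,D\varphi_\ell\,dy + (g(x)-g_\ell(x))\int F\,D\varphi_\ell\,dy,
\]
with $F(y) := f(x-y)-f(x)$ and $G(y) := g(x-y)-g(x)$. Because $|F|, |G|$ are $O(\ell^\alpha)$ on $|y|\leq\ell$ and $\int|D\varphi_\ell|\leq C\ell^{-1}$, each term is $O(\ell^{2\alpha-1}\|f\|_\alpha\|g\|_\alpha)$. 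Iterating this bookkeeping yields the analogous estimate at every integer $r\geq 1$, and fractional $r$ follows by interpolation \eqref{e:Holderinterpolation2}.
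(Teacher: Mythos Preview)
Your argument is correct and, where the paper gives details, essentially parallel to it. The paper defers \eqref{e:mollify1}, \eqref{e:mollify2}, \eqref{e:mollify3} to \cite[Lemma~1]{CDS} and proves only \eqref{e:mollify4}; for the latter it uses the mean--value bound $\|f-f*\varphi_\ell\|_0\leq C\ell[f]_1$ applied to $Df$ to get $[f-f*\varphi_\ell]_1\leq C\ell[f]_2$, and then interpolates separately on $[0,1]$ and on $[1,2]$. Your direct interpolation between the endpoints $r=0$ and $r=2$ is a slightly cleaner variant of the same idea.

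One word of caution on \eqref{e:mollify3}: your formula for $DE$ contains the factors $f(x)-f_\ell(x)$ and $g(x)-g_\ell(x)$, which are only $C^\alpha$, so you cannot literally differentiate that formula again to reach $D^2E$. What does work is to observe (as your computation implicitly shows) that
\[
DE(x)=\int\bigl(f(x-y)-f_\ell(x)\bigr)\bigl(g(x-y)-g_\ell(x)\bigr)D\varphi_\ell(y)\,dy,
\]
and when you differentiate this once more the cross terms that appear are products of $D^a f_\ell$ and $D^b g_\ell$, each of which obeys $|D^a f_\ell|\leq C\ell^{\alpha-a}[f]_\alpha$ via $\int D^a\varphi_\ell=0$; so every new term still carries the factor $\ell^{2\alpha-r}$. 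Alternatively, writing $E(x)$ as a triple integral in $y,z,w$ and shifting all $x$--derivatives onto the three mollifier factors gives the bound for every integer $r$ in one stroke. Either way the estimate closes, so this is a matter of presentation rather than a gap.
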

\begin{proof}
 Except for \eqref{e:mollify4}, the other estimates are contained in \cite[Lemma 1]{CDS}. The additional claim \eqref{e:mollify4} can be seen as follows. Recall the estimate
 \[
  \|f-f*\varphi_\ell\|_0 \leq C \ell [f]_1\, ,
 \]
which can be derived using the mean value theorem and an integration. 
We combine this estimate with \eqref{e:Holderinterpolation2} and \eqref{e:mollify2} to get 
\begin{align*}
 [f-f*\varphi_\ell]_r &\leq C\|f-f*\varphi_\ell\|_0^{1-r}[f-f*\varphi_\ell]_1^{r}\\
 \leq & C \left( \ell^{2}\|D^2f\|_0 \right)^{1-r}\left( \ell\|D^2f\|_0 \right)^{r}\leq C\ell^{2-r}[f]_2\, ,
\end{align*}
whenever $0\leq r\leq1$. If however $1\leq r\leq2$, we invoke the trivial inequality \[ [f-f*\varphi_\ell]_2\leq C[f]_2\] to deduce
\begin{equation*}
 [f-f*\varphi_\ell]_r \leq C\|\nabla f-\nabla f*\varphi_\ell\|_0^{2-r}[\nabla f-\nabla f*\varphi_\ell]_1^{r-1} \leq C \ell^{2-r}[f]_2\, ,
\end{equation*}
from which the claim follows.
\end{proof}

\subsection{Conformal coordinates}\label{s:conformal}
A crucial ingredient of our proof is the following proposition on the existence of conformal coordinates for $C^{N, \alpha}$ metrics. Although such existence is a very classical fact, we need an explicit dependence of the norms of the coordinates in terms of the regularity of the metric. Since we have not been able to find a precise reference in the literature, we include a proof in the appendix.

\begin{proposition}\label{p:conformal}
  For any $N,\alpha,\beta$ with $N\in \N,N\geq 1$, $0<\beta\leq \alpha <1$ there exist constants $C  (N, \alpha, \beta), \sigma_1 (N, \alpha, \beta)>0 $ and $\bar{C} (\alpha)$ such that the following holds. If $1\leq r \leq 2$ and $g$ is a $C^{N,\alpha}$ metric on $\overline{D}_r$ with 
\begin{equation}\label{e:smallness}
\|g-e\|_{\alpha} \leq \sigma_1
\end{equation}
then there exists a coordinate change $\Phi: \overline{D}_r \to \R^2$ and a function $\rho: \overline{D}_r \rightarrow \R^+$ satisfying 
\begin{equation}\label{e:conformal}
g=\rho^2\left( \nabla\Phi_1\otimes\nabla\Phi_1+ \nabla\Phi_2\otimes\nabla\Phi_2\right)
\end{equation}
and the following estimates
\begin{align}
&\|\rho-1\|_\alpha+\|D \Phi-Id\|_\alpha \leq \bar C\|g-e\|_\alpha \label{e:conf_0}\\
&\|D^k\rho\|_\beta +\|D^{k+1}\Phi\|_\beta \leq C\|g-e\|_{k+\beta}\qquad \forall 1 \leq k \leq N\, .\label{e:conf_k}
\end{align}
\end{proposition}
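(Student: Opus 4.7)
The plan is to reduce the problem to the classical regularity theory for the Beltrami equation. Writing $w := \Phi_1 + i\Phi_2$, the identity \eqref{e:conformal} is equivalent to $w$ being an orientation-preserving conformal mapping from $(\overline{D}_r, g)$ to $(\mathbb{C},|\cdot|^2)$, which in complex notation amounts to the Beltrami equation
\begin{equation*}
\partial_{\bar z} w \;=\; \mu\,\partial_z w, \qquad \mu \;:=\; \frac{g_{11}-g_{22}+2ig_{12}}{g_{11}+g_{22}+2\sqrt{\det g}}\, .
\end{equation*}
A direct computation shows that $\mu$ is a smooth function of the entries of $g$ that vanishes when $g=e$, so Proposition~\ref{p:chain} together with \eqref{e:Holderproduct} gives $\|\mu\|_{k+\beta}\le C\|g-e\|_{k+\beta}$ for every $0\le k\le N$, once $\|g-e\|_\alpha\le\sigma_1$ is small enough. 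Once $w$ is built, $\rho$ is recovered from the algebraic identity $\rho^2(|\nabla\Phi_1|^2+|\nabla\Phi_2|^2)=\tr g$, so the estimates on $\rho$ reduce to those on $\Phi$ and $g$.

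Next I would extend $\mu$, via a Whitney-type construction followed by a cutoff, to a compactly supported function $\tilde\mu$ on $\mathbb{C}$ with $\|\tilde\mu\|_{k+\beta}\le C\|g-e\|_{k+\beta}$ and $\|\tilde\mu\|_\infty\le\bar C\sigma_1<1$, and seek the Ahlfors normalized solution $w(z)=z+\phi(z)$, $\phi\to 0$ at infinity. Setting $h:=\partial_z\phi$, the Cauchy transform $T$ and the Beurling transform $S=\partial_z T$ turn the Beltrami equation into
\[
(I - S_{\tilde\mu})\,h \;=\; S\tilde\mu, \qquad S_{\tilde\mu}f:=S(\tilde\mu f).
\]
Since $S$ has operator norm $1$ on $L^2$ and is bounded on $C^\beta$ for every $0<\beta<1$, the smallness of $\|\tilde\mu\|_\alpha$ makes $I-S_{\tilde\mu}$ invertible on $C^\alpha$ by Neumann series, and the restriction of the resulting $w$ to $\overline{D}_r$ is a diffeomorphism onto its image because $\|D\Phi-\Id\|_0$ will be small.

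To obtain \eqref{e:conf_0} I would work in $C^\alpha$ throughout: the Neumann series yields $\|h\|_\alpha\le 2\|S\tilde\mu\|_\alpha\le \bar C\|\tilde\mu\|_\alpha\le \bar C\|g-e\|_\alpha$, and since $\partial_{\bar z}\phi=\tilde\mu(1+h)$ the same bound applies to $\partial_{\bar z}\phi$; passing from $w$ to $\Phi=(\Re w,\Im w)$ gives $\|D\Phi-\Id\|_\alpha\le \bar C\|g-e\|_\alpha$. The identity $\rho^2(|\nabla\Phi_1|^2+|\nabla\Phi_2|^2)=\tr g$ expresses $\rho-1$ as a smooth function of $(g-e,\,D\Phi-\Id)$ that vanishes at the origin, so Proposition~\ref{p:chain} gives $\|\rho-1\|_\alpha\le \bar C\|g-e\|_\alpha$.

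The main obstacle is the higher-order estimate \eqref{e:conf_k}: because $\|\tilde\mu\|_{k+\beta}$ \emph{need not} be small, one cannot simply iterate the Neumann series in $C^{k+\beta}$. The remedy is to revisit the fixed-point identity $h=S[\tilde\mu(1+h)]$ and apply the product inequality \eqref{e:Holderproduct} together with the boundedness of $S$ on $C^{k+\beta}$ to obtain
\begin{equation*}
\|h\|_{k+\beta}\;\le\;C\|\tilde\mu\|_{k+\beta}(1+\|h\|_0) + C\|\tilde\mu\|_0\,\|h\|_{k+\beta}\, .
\end{equation*}
Smallness of $\|\tilde\mu\|_0$ absorbs the second term on the right, while $\|h\|_0$ is already controlled by the first-order step, so $\|h\|_{k+\beta}\le C\|g-e\|_{k+\beta}$. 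The same estimate for $\partial_{\bar z}\phi=\tilde\mu(1+h)$ yields $\|D^{k+1}\Phi\|_\beta\le C\|g-e\|_{k+\beta}$; differentiating the identity for $\rho^2$ and reapplying Proposition~\ref{p:chain} produces the matching bound on $\|D^k\rho\|_\beta$ and completes the proof. The crucial point is that the linear dependence on $\|g-e\|_{k+\beta}$ in \eqref{e:conf_k} is precisely what the above absorption argument delivers, because the nonlinear term $\tilde\mu h$ is controlled by the small factor $\|\tilde\mu\|_0$.
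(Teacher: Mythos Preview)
Your proposal is correct and follows essentially the same route as the paper: reduce \eqref{e:conformal} to the Beltrami equation with the same coefficient $\mu$, extend $\mu$ to a compactly supported function, solve $w(z)=z+\phi(z)$ via a Neumann series built from the Beurling transform, and then read off $\rho$ algebraically. The only cosmetic differences are that the paper bounds the higher derivatives by estimating each summand $(\mu\, \sS)^n h$ of the Neumann series directly (using that $\sS$ commutes with derivatives on compactly supported functions), whereas you use an equivalent absorption argument on the fixed-point identity; and the paper recovers $\rho$ from $\rho^2=\sqrt{\det g}/J\Phi$ rather than from the trace identity --- both are immediate algebraic consequences of \eqref{e:conformal}.
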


\subsection{Oscillatory functions} The construction of $u_{q+1}$ is based on adding to the map $u_q$ suitable ``wrinkles'', namely suitable perturbations.
The basic model for this perturbation takes advantage of a pair of real-valued functions with very specific properties, which we will detail here.

\begin{proposition}\label{p:Kuiper}
There exists $\delta_\star>0$ and a function $\Gamma = (\Gamma^t, \Gamma^n) \in C^\infty ([0, \delta_\star]\times \R, \R^2$) with the following properties
\begin{itemize}
\item[(a)] $\Gamma (s, \xi) = \Gamma (s, \xi + 2\pi)$ for every $s, \xi$;
\item[(b)] $(1+ \partial_\xi \Gamma^t)^2 + (\partial_\xi \Gamma^n)^2 = 1+s^2$;
\item[(c)] The following estimates hold:
\begin{align}
\|\partial^k_\xi \Gamma^n (s, \cdot)\|_0 \leq &C (k) s \label{e:Gamma_n}\\
\|\partial^k_\xi \Gamma^t (s, \cdot)\|_0 \leq & C(k) s^2 \label{e:Gamma_t}\\
\|\partial_s \partial^k_\xi \Gamma^t (s, \cdot)\|_0 \leq & C(k) s\, .\label{e:Ds_Gamma_t}
\end{align}
\end{itemize}
\end{proposition}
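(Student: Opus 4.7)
The plan is to use the classical Nash--Kuiper corrugation ansatz, parametrizing the tangent to the perturbed curve via an angle field $\theta(s,\xi):=\phi(s)\sin\xi$ for a smooth profile $\phi\colon[0,\delta_\star]\to\R_+$ to be determined, and then defining
\begin{equation*}
\Gamma^n(s,\xi) := \int_0^\xi \sqrt{1+s^2}\,\sin\bigl(\phi(s)\sin\eta\bigr)\,d\eta,\quad \Gamma^t(s,\xi) := \int_0^\xi \Bigl(\sqrt{1+s^2}\,\cos\bigl(\phi(s)\sin\eta\bigr)-1\Bigr)\,d\eta.
\end{equation*}
By construction $(1+\partial_\xi\Gamma^t)^2+(\partial_\xi\Gamma^n)^2=(1+s^2)(\cos^2\theta+\sin^2\theta)=1+s^2$, so property (b) is immediate.

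The function $\phi$ is forced by the $2\pi$-periodicity condition in (a): the $\Gamma^n$ integrand has zero mean on $[0,2\pi]$ automatically by antisymmetry of $\sin\eta$ about $\pi$, whereas for $\Gamma^t$ the condition reads $J_0(\phi(s))=(1+s^2)^{-1/2}$, with $J_0$ the Bessel function of the first kind. This is the only nontrivial step, because $J_0'(0)=0$ precludes a direct application of the implicit function theorem at $(s,\phi)=(0,0)$. One resolves it by parity: writing $1-J_0(\phi)=\phi^2 h_1(\phi^2)$ and $1-(1+s^2)^{-1/2}=s^2 h_2(s^2)$ with $h_1,h_2$ smooth and $h_1(0)=1/4$, $h_2(0)=1/2$, the equation $\phi^2 h_1(\phi^2)=s^2 h_2(s^2)$ is solved for $\phi^2=s^2\,k(s^2)$ with $k$ smooth and $k(0)=2>0$ by a standard implicit function argument. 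Taking the smooth square root yields $\phi$ on some $[0,\delta_\star]$ with $\phi(0)=0$, $\phi'(0)=\sqrt{2}$, and in particular $\phi(s)=O(s)$ and $\phi'(s)=O(1)$.

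With $\phi$ fixed, the estimates in (c) reduce to Taylor expansion. Every $\xi$-derivative of $\sin(\phi\sin\xi)$ is of order $\phi=O(s)$; this is clearest from the Fourier--Bessel expansion $\sin(\phi\sin\xi)=2\sum_{m\ge 0} J_{2m+1}(\phi)\sin((2m+1)\xi)$ with $J_{2m+1}(\phi)=O(\phi^{2m+1})$, and gives \eqref{e:Gamma_n}. Similarly, every $\xi$-derivative of $\cos(\phi\sin\xi)-1$ is of order $\phi^2=O(s^2)$, which together with $\sqrt{1+s^2}-1=O(s^2)$ yields \eqref{e:Gamma_t}. For \eqref{e:Ds_Gamma_t}, differentiation in $s$ either brings down $s/\sqrt{1+s^2}=O(s)$ from the prefactor against a quantity of order $1$, or a factor $\phi'(s)=O(1)$ from inside the trigonometric function against a quantity of order $\phi=O(s)$; either way the result is $O(s)$.

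The only real obstacle is the solvability of $J_0(\phi)=(1+s^2)^{-1/2}$ smoothly at the doubly degenerate point $s=\phi=0$; once the parity trick sketched above handles this, the remainder of the proof is routine bookkeeping of asymptotic orders.
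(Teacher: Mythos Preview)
Your argument is correct and follows the same corrugation ansatz as the paper (which largely defers to \cite[Lemma~2]{CDS}): define $\Gamma$ via the angle field $\phi(s)\sin\xi$, fix $\phi$ through the Bessel relation $J_0(\phi)=(1+s^2)^{-1/2}$, and read off the estimates. The only cosmetic difference is that the paper obtains \eqref{e:Gamma_t} by integrating \eqref{e:Ds_Gamma_t} in $s$ (using $\Gamma^t(0,\cdot)=0$), whereas you derive the $O(s^2)$ bound directly from $\cos(\phi\sin\eta)-1=O(\phi^2)$ and $\sqrt{1+s^2}-1=O(s^2)$; both routes are equally short.
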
 
\begin{proof}
Except for \eqref{e:Gamma_t} the remaining claims are contained in \cite[Lemma 2]{CDS}. The idea is to let $\Gamma$ have the form 
 \[ \Gamma(s,\xi) \coloneqq \int\limits_0^{\xi}\left( \sqrt{1+s^2}\left(\cos(f(s)\sin(\tau)), \sin(f(s)\sin(\tau))\right)-(1,0) \right)d\tau\, ,\]
 for an appropriately chosen function $f$ such that (a), \eqref{e:Gamma_n} and \eqref{e:Ds_Gamma_t} are fulfilled. (b) is satisfied by construction. The additional statement \eqref{e:Gamma_t} follows from integrating \eqref{e:Ds_Gamma_t} in $s$.
\end{proof}

\section{Proof of Proposition \ref{p:main}, Part I}\label{s:iter_start}

\subsection{Hierarchy of parameters} A first ingredient in the construction of $u_{q+1}$ is to smooth $u_q$ suitably via a standard mollification. For this we introduce the mollification parameter $\ell$, which is rather small: indeed it is defined by the relation
\begin{equation}\label{e:ell}
\ell^{2-\alpha} := \frac{1}{\tilde C} \frac{\delta_{q+1}}{\delta_q \lambda_q^2}\, ,
\end{equation}
where $\tilde C$ is a constant larger than $1$ which depends only upon $\alpha$, $g$, $\sigma_0$ and $\bar C$ and which will be specified in Section \ref{s:regularize} below. 

The map $u_{q+1}$ will be obtained from (a suitable regularization of) the map $u_q$ in two steps. First we will add an oscillatory perturbation whose frequency is
\begin{equation}\label{e:def_mu}
\mu := \hat{C} \frac{\delta_{q+1} \lambda_{q+1}^\alpha}{\delta_{q+2} \ell}\, ,
\end{equation}
where the constant $\hat{C}$, larger than $1$, depends only upon $\alpha$, $g$, and $\sigma_0$ (we specify its choice in Section \ref{s:iter_conclusion}).
We will then choose a second perturbation whose frequency is $\lambda_{q+1}$. 

We next record a few inequalities among the parameters which will be rather useful in simplifying some of our estimates in the remaining sections. Except for the very first inequality in \eqref{e:mu}, which requires a choice of $a$ sufficiently large compared to the constant $\hat{C}$, all the others are immediate from the restrictions imposed so far on all the various parameters.
\begin{align}
&\delta_q \lambda_q^2  \geq 1\, ,\label{e:deltalambda>1}\\
& \lambda_{q+1} \geq  \mu \geq \ell^{-1}\geq \lambda_q\, ,\label{e:mu} \\
& \delta_q^{\sfrac{1}{2}} \lambda_q\leq \delta_q^{\sfrac{1}{2}} \lambda_q \ell^{-\sfrac{\alpha}{2}}\leq\delta_{q+1}^{\sfrac{1}{2}} \ell^{-1} \leq \delta_{q+1}^{\sfrac{1}{2}} \mu \leq \delta_{q+1}^{\sfrac{1}{2}} \lambda_{q+1}\, , \label{e:ell_lambda}
\end{align}
The first inequality \eqref{e:deltalambda>1} follows from
$\delta_q \lambda_q^2 = a^{c^2b^{2q+2} - b^q}\geq a^{b^2-1}$ (where we have used $c, b >1$). Observe that this easily implies $\ell \leq 1$ (recall that $\delta_{q+2}$ and $\tilde{C}^{-1}$ are both smaller than $1$), which in turn gives the first inequality in \eqref{e:ell_lambda}. Note also that
the last inequality in \eqref{e:mu} is weaker than the second inequality in \eqref{e:ell_lambda}:
\[
\ell^{-1} \geq \ell^{-1+ \sfrac{\alpha}{2}} \geq \frac{\delta_q^{\sfrac{1}{2}}}{\delta_{q+1}^{\sfrac{1}{2}}} \lambda_q \geq \lambda_q\, .
\]
Coming to the second inequality in \eqref{e:ell_lambda}, observe that, by the definition of $\ell$, this is just the requirement that $\tilde{C}\geq 1$. 
As for the last two inequalities in \eqref{e:ell_lambda} are equivalent to the first two in \eqref{e:mu}, which will be shown below.
Moreover, since $\hat{C} > 1$, $\lambda_{q+1} >1$ and $\delta_{q+1} \geq \delta_{q+2}$, the second inequality in \eqref{e:mu} is obvious. 

We are therefore left with showing the first inequality in \eqref{e:mu} which, as already mentioned, needs a sufficiently large $a$. As it can be readily checked from the definition of $\mu$, such inequality is in fact equivalent to $\delta_{q+2} \lambda_{q+1}^{1-\alpha} \geq \hat{C} \delta_{q+1} \ell^{-1}$. But we record in fact a much stronger inequality, which turns out to be the key relation to conclude the estimates in Proposition \ref{p:main}, as it will become apparent in Section \ref{s:iter_conclusion}.
More precisely, given any constant $\underline{C}$ which depends upon $\alpha, g, \sigma_0$ and $\bar{C}$, the following inequality holds provided $a$ is chosen large enough:
\begin{equation}\label{e:key_parameters}
\delta_{q+2}^2 \lambda_{q+1}^{1-2\alpha} \geq \underline{C} \delta_{q+1}^2 \ell^{-1} \, .
\end{equation}
In fact such inequality is equivalent to
\[
\delta_{q+2}^2 \lambda_{q+1}^{1-2\alpha}
\geq \underline{C} \tilde{C}^{1/(2-\alpha)} \delta_{q+1}^{2-1/(2-\alpha)} \delta_q^{1/(2-\alpha)} \lambda_q^{2/(2-\alpha)}\, .
\]
Taking the logarithm in base $a$ this is equivalent to
\[
 (c (1-2\alpha)-2) b^{q+2} \geq \left(\frac{1+2c}{2-\alpha}-2\right) b^{q+1} - \frac{1}{2-\alpha} b^q + \log_a \underline{C} + \frac{1}{2-\alpha} \log_a \tilde{C}\, .
\]
The latter follows for a sufficiently large $a$ (depending upon $b$, $c$, $\tilde{C}$ and $\underline{C}$) provided
\[
\big(c(1-2\alpha) -2\big) b^2 > \left(\frac{1+2c}{2-\alpha}-2\right) b - \frac{1}{2-\alpha}\, ,
\]
which is equivalent to
\[
c b ((2-\alpha)(1-2\alpha) b -2 ) > 2 (2-\alpha) b^2 + (1-2(2-\alpha)) b -1\, .
\]
The latter inequality is however obviously implied by \eqref{e:abc_1} and \eqref{e:abc_2}.

\subsection{Constants} In the rest of the paper we will deal with several estimates where we bound norms of various functions using the parameters introduced so far, namely $\delta_q, \lambda_q, \ell, \mu$ and $\lambda_{q+1}$. In front of the expressions involving such parameters there will always be some constants, independent of $a$, $b$ and $c$. However it is important to distinguish between two types of such constants: the ones which depend only upon $\alpha$, $g$ and $\sigma_0$ will be denoted by $C$, whereas the ones which depend also upon the $\bar{C}$ of Proposition \ref{p:main} will be denoted by $C^\star$. Note also that the parameter $\sigma_0$ will in fact be chosen as a function of $\alpha$ in Section \ref{s:conformal_change}. Therefore the constants denoted by $C$ will depend only upon $\alpha$ and $g$, whereas those denoted by $C^\star$ will depend, additionally, also upon $\bar{C}$. {Moreover, the values of $C$ and $C^\star$ may change from line to line.}

\subsection{Regularization}\label{s:regularize} 
 Having fixed a standard mollifier $\varphi$, we then define
\begin{equation}\label{e:tau}
h_q := \frac{g*\varphi_\ell - (u_q*\varphi_\ell)^\sharp e}{\delta_{q+1}}-\frac{\delta_{q+2}}{\delta_{q+1}} e\, . 
\end{equation}
Observe that 
\[
(u_q*\varphi_\ell)^\sharp e +\delta_{q+1} h_q = g*\varphi_\ell - \delta_{q+2} e = g_{q+1} + (g*\varphi_\ell-g)\, .
\] 
So the strategy of the proof will
be to perturb $u_q*\varphi_\ell$ to a map $u_{q+1}$ such that 
\[
u_{q+1}^\sharp e = (u_q*\varphi_\ell)^\sharp e +\delta_{q+1} h_q + E = g_{q+1} + E + (g*\varphi_\ell-g) \, ,
\] 
(cf. \ref{e:metric_error})
where the error term $E$ is suitably small. Before coming to the construction of the map $u_{q+1}$ we deal in this section with the smallness conditions to be imposed on $\ell$.

First of all, by choosing $\tilde{C}$ larger than a geometric constant and $a$ sufficiently large (depending upon $b$ and $c$), we can assume 
that $\ell \leq 2^{-{q-2}}$, so that $h_q$ is in fact defined on $\overline{D}_{1+ 2^{-q-2}}$.
Next, using Lemma \ref{l:mollify} we can estimate 
\begin{align*}
& \|h_q - e\|_\alpha \leq \;  \frac{\delta_{q+2}}{\delta_{q+1}} + \frac{1}{\delta_{q+1}}\| g*\varphi_\ell - (u_q*\varphi_\ell)^\sharp e -\delta_{q+1}e\|_\alpha\\
 \leq &\; a^{-(b-1)} + \frac{1}{\delta_{q+1}} \big(\|(u_q^\sharp e) * \varphi_\ell - (u_q*\varphi_\ell)^\sharp e\|_\alpha
+ \|(g_q - u_q^\sharp e)* \varphi_\ell\|_\alpha\\
& \qquad\qquad\qquad\qquad\qquad\qquad\qquad +  \|g - g*\varphi_\ell\|_\alpha\big)\nonumber\\
\leq & \sigma_0 + C^\star \frac{\ell^{2-\alpha} \delta_q \lambda_q^2}{\delta_{q+1}} + \sigma_0 + \frac{C}{\delta_{q+1}} \|D^2 g\|_0 \ell^{2-\alpha}\\
\stackrel{\eqref{e:deltalambda>1}}{\leq} & 2\sigma_0 + C^\star \frac{\ell^{2-\alpha} \delta_q \lambda_q^2}{\delta_{q+1}}\leq 3\sigma_0\, ,
\end{align*}
where the latter inequality specifies the condition needed on $\tilde{C}$ in \eqref{e:ell}.

Similarly, for $1\leq k \leq 4$, we can bound
\begin{align}
\|D^k h_q\|_0 \leq& \frac{1}{\delta_{q+1}}\Big(\|D^k (g - u_q^\sharp e)*\varphi_\ell\|_0\nonumber\\
 &\qquad\qquad+ \|D^k ((u^\sharp e)*\varphi_\ell - (u_q *\varphi_\ell)^\sharp e)\|_0 \Big)\nonumber\\
&\leq C\ell^{\alpha-k}\sigma_0 + C^\star \frac{\delta_q\lambda_q^2}{\delta_{q+1}} \ell^{2-k}\leq C \ell^{\alpha -k}\, ,
\end{align}
where we have used \eqref{e:ell} and Lemma \ref{l:mollify}.
Interpolating, for any $0\leq k\leq 3$ we then get
\begin{equation}\label{e:tau_k_1}
\|h_q - e\|_{k + \alpha} \leq C \ell^{-k}\, .
\end{equation}
We summarize the conclusions of the previous paragraphs in the following lemma.

\begin{lemma}\label{l:convolution}
If we choose $\tilde{C}$ sufficiently large, depending upon $\alpha, g$ and $\bar{C}$, we then have
\begin{align}
&\|h_q-e\|_\alpha \leq 3 \sigma_0\label{e:3sigma}\\
&\|h_q-e\|_{k+\alpha} \leq C \ell^{-k}\qquad \mbox{for $1\leq k \leq 3$,}\label{e:tau_k}
\end{align}
where the constant $C$ depends only upon $\alpha$ and $g$.
\end{lemma}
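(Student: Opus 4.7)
The proof is in essence carried out already in the computations immediately preceding the lemma; what I would do is to organize them into a clean argument and verify the balancing of constants.

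First, I would estimate $\|h_q-e\|_\alpha$ by splitting the numerator in \eqref{e:tau} into three natural pieces:
\begin{align*}
g*\varphi_\ell - (u_q*\varphi_\ell)^\sharp e - \delta_{q+1} e = \,& \bigl((u_q^\sharp e)*\varphi_\ell - (u_q*\varphi_\ell)^\sharp e\bigr) \\
& + (g_q - u_q^\sharp e)*\varphi_\ell + (g - g*\varphi_\ell) - (\delta_{q+1}-\delta_{q+2})e - \delta_{q+2}e\bigr,
\end{align*}
where I use $g_q = g - \delta_{q+1} e$ to absorb the $\delta_{q+1}e$ factor. After dividing by $\delta_{q+1}$ and subtracting $e$, the $\delta_{q+2}/\delta_{q+1}$ term contributes $a^{-(b-1)}\leq \sigma_0$ (for $a$ large). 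The middle piece is bounded by $\sigma_0$ using \eqref{e:me_0_alpha_q} and the fact that convolution with $\varphi_\ell$ does not increase the $C^\alpha$ norm. The mollification of $g$ contributes $C\ell^{2-\alpha}\|D^2 g\|_0/\delta_{q+1}$, which is harmless because $\|D^2 g\|_0$ is controlled by the fixed metric. The key term is the quadratic mollification error, for which I apply \eqref{e:mollify3} with $f=g=Du_q$, and then combine with the hypothesis \eqref{e:Du_2_q} via interpolation between $\|Du_q\|_0\leq C$ (cf.\ \eqref{e:uniform_C1}) and $\|D^2 u_q\|_0 \leq \bar C \delta_q^{1/2}\lambda_q$: this yields an estimate by $C^\star \ell^{2-\alpha}\delta_q\lambda_q^2/\delta_{q+1}$. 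By \eqref{e:ell} this equals $C^\star/\tilde C$, so choosing $\tilde C$ large depending on $\bar C, g, \alpha$ gives the third $\sigma_0$ contribution and hence \eqref{e:3sigma}.

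Next I would establish the higher-order estimates for $\|D^k h_q\|_0$, $1\le k\le 4$, by splitting in the same way. The term $\|D^k((g_q-u_q^\sharp e)*\varphi_\ell)\|_0$ is handled via \eqref{e:mollify1} together with \eqref{e:me_0_alpha_q} and yields $C\ell^{\alpha-k}\sigma_0$. The term $\|D^k(g-g*\varphi_\ell)\|_0/\delta_{q+1}$ is bounded by $C\ell^{2-k}/\delta_{q+1}$, which is at worst of order $C\ell^{\alpha-k}$ after using \eqref{e:ell} and $\delta_{q+1}\le 1$. The remaining quadratic error $\|D^k\bigl((u_q^\sharp e)*\varphi_\ell - (u_q*\varphi_\ell)^\sharp e\bigr)\|_0$ is dominated by $C^\star \ell^{2-k}\delta_q\lambda_q^2/\delta_{q+1}$ by \eqref{e:mollify3} (with $\alpha=1$) together with the $C^2$ estimate \eqref{e:Du_2_q}, and this is $\le C^\star \ell^{\alpha-k}$ again by the definition \eqref{e:ell} of $\ell$.

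Finally, given the $C^0$ estimate on $h_q-e$ (which comes from \eqref{e:3sigma} and the fact $\ell\le 1$) together with these $C^k$ estimates for $k=1,\dots,4$, I interpolate via \eqref{e:Holderinterpolation2} between $\|D^k h_q\|_0 \le C\ell^{\alpha-k}$ and $\|D^{k+1} h_q\|_0\le C\ell^{\alpha-k-1}$ to recover $[h_q]_{k+\alpha}\le C\ell^{-k}$ for $1\le k\le 3$. Summing gives \eqref{e:tau_k}. The only mildly tricky point in the entire argument is the quadratic mollification estimate together with the choice of $\tilde C$ in \eqref{e:ell}: this is where the condition \eqref{e:deltalambda>1} and the $C^2$ bound \eqref{e:Du_2_q} are essential, and the whole construction is designed so that $\ell$ sits precisely at the balancing scale.
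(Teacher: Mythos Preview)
Your argument is correct and follows essentially the same route as the paper: the same three-term decomposition, the quadratic mollification estimate \eqref{e:mollify3} applied to $Du_q$ with the $C^1$ bound $\|Du_q\|_1\le C^\star\delta_q^{1/2}\lambda_q$, the choice of $\tilde C$ to absorb the resulting $\ell^{2-\alpha}\delta_q\lambda_q^2/\delta_{q+1}$, and interpolation from the $\|D^k h_q\|_0\le C\ell^{\alpha-k}$ bounds to \eqref{e:tau_k}. One small slip: the displayed splitting identity does not balance as written (the $(g-g*\varphi_\ell)$ term and the extra $-(\delta_{q+1}-\delta_{q+2})e-\delta_{q+2}e$ are redundant once you expand $(g_q-u_q^\sharp e)*\varphi_\ell$, and similarly the $g-g*\varphi_\ell$ piece is not needed in the $D^k$ estimate), but this is a bookkeeping error that does not affect the bounds.
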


\subsection{Conformal diffeomorphism}\label{s:conformal_change}
We now wish to apply Proposition \ref{p:conformal} with $\beta = \alpha >0$ and $N=3$. This requires to choose $\sigma_0$ such that $3\sigma_0 \leq \sigma_1$, where $\sigma_1$ is the constant appearing in \eqref{e:smallness}. We thus find maps $\Phi$ and $\rho$ such that
\[ 
h_q = \rho^2\left( \nabla\Phi_1\otimes \nabla \Phi_1+\nabla\Phi_2\otimes \nabla \Phi_2 \right)\, .
\]
Furthermore, dividing $\rho$ by $\max \rho$, multiplying $\Phi$ by $\max \rho$ and using \eqref{e:conf_0}, 
we can assume that
\begin{equation}\label{e:rho_DPhi_0}
\frac{1}{2} \leq \rho \leq 2 \qquad \|D \Phi- {\rm Id}\|_0 \leq \frac{1}{2}\, ,
\end{equation}
provided $\sigma_0$ is chosen sufficiently small. This exhausts the condition on $\sigma_0$: note that they depend only upon $\alpha$, since $N$ and $\beta$ in Proposition \ref{p:conformal} are fixed to be $3$ and $\alpha$.

Moreover, for any $1\leq k \leq 3$ we apply \eqref{e:conf_k}
and \eqref{e:tau_k} to estimate
\begin{equation}\label{e:rho_DPhi_k}
\|D^k \rho\|_\alpha + \|D^{k+1} \Phi\|_\alpha \leq C \ell^{-k} \,.
\end{equation}

\subsection{Adding the first {primitive metric}} We next set $w:= u_q *\varphi_\ell$ and we define the following two three-dimensional vectors:
\begin{equation}\label{e:tau_1}
\tau_1 := Dw (Dw^T Dw)^{-1} \nabla \Phi_1\, 
\end{equation}
and
\begin{equation}\label{e:nu_1}
\nu_1 := \frac{\partial_{x_1} w \times \partial_{x_2} w}{|\partial_{x_1} w \times \partial_{x_2} w|}\, . 
\end{equation}
Observe that $\nu_1$ is in the kernel of $Dw^T$ (or, in other words, $\nu_1 (x)$ is a unit normal to the tangent plane $T_{w (x)} ({\rm Im}\, (w))$). Hence it follows easily that $\tau_1$ and $\nu_1$ are orthogonal.

We next normalize these vectors suitably, defining
\begin{align}
& t_1 := \frac{\tau_1}{|\tau_1|^2}\label{e:t_1}\, ,\\
& n_1 := \frac{\nu_1}{|\tau_1|}\, .\label{e:n_1}
\end{align}
Finally, we define the first perturbation of $w$, namely the map $v$ given by the formula
\begin{equation}\label{e:v}
v = w + \frac{1}{\mu} \Gamma^t \left( \delta_{q+1}^{\sfrac{1}{2}} |\tau_1| \rho, \mu \Phi_1\right) t_1 + \frac{1}{\mu} \Gamma^n 
\left(\delta_{q+1}^{\sfrac{1}{2}} |\tau_1|\rho, \mu \Phi_1\right)\, n_1\, ,
\end{equation}
whereas we define
\begin{equation}\label{e:E1}
E_1 := v^\sharp e - (w^\sharp e + \delta_{q+1} \rho^2 \nabla \Phi_1 \otimes \nabla \Phi_1)\, .
\end{equation}

\subsection{Adding the second {primitive metric}} The map $u_{q+1}$ is then obtained by adding a similar second perturbation to the map $v$. More precisely we define this time
\begin{align}
\tau_2 &:= Dv (Dv^T Dv)^{-1} \nabla \Phi_2\, ,\label{e:tau_2}\\
\nu_2 &:= \frac{\partial_{x_1} v \times \partial_{x_2} v}{|\partial_{x_1} v\times \partial_{x_2} v|}\,, \label{e:nu_2}\\
t_2& := \frac{\tau_2}{|\tau_2|^2}\, ,\label{e:t_2}\\
n_2 & := \frac{\nu_2}{|\tau_2|}\, .\label{e:n_2}
\end{align}
The map $u_{q+1}$ is then given by the following formula (analogous to \eqref{e:v}):
\begin{equation}\label{e:u_q+1}
u_{q+1}= v + \frac{1}{\lambda_{q+1}} \Gamma^t \left( \delta_{q+1}^{\sfrac{1}{2}} |\tau_2| \rho, \lambda_{q+1} \Phi_2\right) t_2 + \frac{1}{\lambda_{q+1}} \Gamma^n 
\left(\delta_{q+1}^{\sfrac{1}{2}} |\tau_2|\rho, \lambda_{q+1} \Phi_2\right)\, n_2\, .
\end{equation}
Similarly we define
\begin{equation}\label{e:E2}
E_2 := u_{q+1}^\sharp e - (v^\sharp e + \delta_{q+1} \rho^2 \nabla \Phi_2 \otimes \nabla \Phi_2)\, .
\end{equation}
Observe that we have the following identity:
\begin{align}
E :=&  E_1 + E_2 = u_{q+1}^\sharp e - (w^\sharp e + \delta_{q+1} \rho^2 (\nabla \Phi_1 \otimes \nabla \Phi_1 + \nabla \Phi_2 \otimes \nabla \Phi_2))\nonumber\\
=&\,  u_{q+1}^\sharp e - w^\sharp e - \delta_{q+1} h_q  = u_{q+1}^\sharp e + \delta_{q+2} e - g*\varphi_\ell \nonumber \\ =&\, u_{q+1}^\sharp e - g_{q+1}+(g-g*\varphi_\ell)\, .\label{e:metric_error}
\end{align}
Hence \begin{align} 
       \|g_{q+1}-u_{q+1}^{\sharp}e\|_0 &\leq \|E\|_0+\|g-g*\varphi_\ell\|_0\, ,\\
       \|D(g_{q+1}-u_{q+1}^{\sharp}e)\|_0 &\leq \|D E\|_0 + \|D(g-g*\varphi_\ell)\|_0\, .
      \end{align}
For $\alpha$ sufficiently small and $a$ sufficiently big one can achieve
\begin{align}
 \|g-g*\varphi_\ell\|_0 &\leq C\|D^{2}g\|_0\ell^{2} \leq \frac{\sigma_0}{6}\delta_{q+2}\lambda_{q+1}^{-\alpha} \, ,\label{e:gmoll1}\\
 \|D(g-g*\varphi_\ell)\|_0 &\leq C\|D^{2}g\|_0 \ell \leq \frac{\sigma_0}{6}\delta_{q+2}\lambda_{q+1}^{1-\alpha} \, . \label{e:gmoll2}
\end{align}
To see this, note that \eqref{e:gmoll1} is implied by the condition
\[C^\star \frac{\delta_{q+1}}{\delta_q\lambda_q^{2}} \leq \delta_{q+2}\lambda_{q+1}^{-\alpha}\, ,\]
which for $a(\bar C)$ big enough is guaranteed if 
\[b^{2}-b+1 <(2-\alpha b)cb\, ,\]
or equivalently
\begin{equation}\label{e:csupp1}
c> \frac{b^{2}-b+1}{b(2-\alpha b)}\, .
\end{equation}
Similarly \eqref{e:gmoll2} follows if 
\[ C^{\star}\frac{\delta_{q+1}^{\sfrac{1}{2}}}{\delta_q^{\sfrac{1}{2}}\lambda_q} \leq \delta_{q+2}\lambda_{q+1}^{1-\alpha}\, ,\]
which (for $a(\bar C)$ big enough) is satisfied whenever 
\begin{equation}\label{e:csupp2}
c > \frac{2b^{2}-b+1}{2b(1+(1-\alpha)b)}\, .
\end{equation}
 Now for any $\alpha >0$, $b>1$ which satisfy the bounds of the Proposition we have
\[\frac{b^{2}-b+1}{b(2-\alpha b)}> \frac{2b^{2}-b+1}{2b(1+(1-\alpha)b)}\, .\]
 Indeed, since $b < \frac{3}{2}$ and $\alpha < \alpha_0$, provided $\alpha_0$ is small enough both denominators
in the fractions above are positive. Hence the inequality
is equivalent to \[ 2b^{2}+(\alpha-4)b+(2-\alpha) = (b-1)(\alpha +2b-2) > 0\, ,\] which for $b>1$ and $\alpha >0$ is always true.  
Hence \eqref{e:csupp1} implies \eqref{e:csupp2}. 

 Next, observe that the left hand side of \eqref{e:abc_2} is larger than
$g_\alpha (b) = \frac{(4-2\alpha)b +1}{2b}$, so \eqref{e:abc_2} implies $c > g_\alpha (b)$.  The bound \eqref{e:csupp1} is
instead $c > h_\alpha (b) = \frac{b^{2}-b+1}{b(2-\alpha b)}$. On the other hand on the interval $[1, \frac{3}{2}]$, $g_\alpha$ and $h_\alpha$ converge uniformly, as $\alpha \downarrow 0$, to the functions $ g_0 (b) = 2 + \frac{1}{2b}$ and $h_0 (b) = \frac{b^2-b+1}{2b}$. Since on $[1, \frac{3}{2}]$ $g_0$ is strictly larger than $h_0$, we infer that  
for $\alpha$ small \eqref{e:abc_2} guarantees \eqref{e:csupp1}. In particular we conclude that for $a$ big enough \eqref{e:abc_2} guarantees \eqref{e:gmoll1}
and \eqref{e:gmoll2}.\\
Thus, the goal of most of the remaining sections is to prove that the desired bounds hold for $\|E\|_0$, $\|D E\|_0$, $\|u_{q+1} - u_q\|_0$, $\|D (u_{q+1} - u_q)\|_0$ and $\|D^2 u_{q+1}\|_0$.

\section{Estimates on $v$ and $E_1$}\label{s:est_1}
Our goal in this subsection is to estimate the $C^0$ norms of $v-u_q$, $D^k v$, $E_1$ and $D E_1$.
To this aim we introduce the functions
\begin{align}
A_1^t & := \partial_\xi \Gamma^t \left( \delta_{q+1}^{\sfrac{1}{2}} |\tau_1| \rho, \mu \Phi_1\right)\, ,\\
A_1^n& := \partial_\xi \Gamma^n \left( \delta_{q+1}^{\sfrac{1}{2}} |\tau_1| \rho, \mu \Phi_1\right)\, ,\\
B_1^t& := \partial_s \Gamma^t \left( \delta_{q+1}^{\sfrac{1}{2}} |\tau_1| \rho, \mu \Phi_1\right)\, ,\\
B_1^n& := \partial_s \Gamma^n \left( \delta_{q+1}^{\sfrac{1}{2}} |\tau_1| \rho, \mu \Phi_1\right)\, ,\\
C_1^t& := \Gamma^t \left( \delta_{q+1}^{\sfrac{1}{2}} |\tau_1| \rho, \mu \Phi_1\right)\, ,\\
C_1^n& := \Gamma^n \left( \delta_{q+1}^{\sfrac{1}{2}} |\tau_1| \rho, \mu \Phi_1\right)\, ,
\end{align}
and we decompose the derivative of $v$ as 
\begin{align}
&Dv =  Dw + \underbrace{A_1^t\,  t_1 \otimes \nabla \Phi_1 + A_1^n\, n_1 \otimes \nabla \Phi_1}_{=:\bA_1}\nonumber\\
&+ \underbrace{\frac{\delta^{\sfrac{1}{2}}_{q+1}}{\mu} (B^t_1\, t_1 + B^n_1\, n_1) \otimes (\rho \nabla |\tau_1| + |\tau_1| \nabla \rho)}_{=:\bB_1}
+ \underbrace{\frac{1}{\mu} \left(C_1^t\, Dt_1 + C_1^n Dn_1\right)}_{=:\bC_1}\, .
\end{align}

\subsection{First technical lemma} In the next lemma we collect the estimates of the $C^0$ norm of the derivatives of the
various quantities introduced above.

\begin{lemma}\label{l:ugly_lemma_1}
Let $\tilde{C}$ be fixed so that Lemma \ref{l:convolution} holds and $\hat{C} \geq 1$. If $a \geq a_0 (\alpha, g, b,c, \bar{C})$ for some $a_0$ sufficiently large, then
there are constants $C$ (depending upon $\alpha$ and $g$ but not on $\bar{C}$) such that
\begin{equation}\label{e:tau_nu_0}
{C^{-1} \leq |\tau_1| \leq C}
\end{equation}
and
\begin{align}
\|w-u_q\|_0 &\leq C \delta_{q+1}^{\sfrac{1}{2}}\ell\label{e:w_0}\, ,\\
\|D (w-u_q)\|_0 & \leq C \delta_{q+1}^{\sfrac{1}{2}}\label{e:w_1}\, ,\\
\|Dw\|_0 &\leq C\label{e:w_1_bis}\, ,\\
\|D^{k} w\|_0&\leq C \delta_{q+1}^{\sfrac{1}{2}}\ell^{1-k}\qquad \ \ \ \, \mbox{for $2 \leq k \leq 4$,}\label{e:w_k}\\
\|D^k \nu_1\|_0&\leq C \delta_{q+1}^{\sfrac{1}{2}} \ell^{-k}\qquad\ \quad \; \mbox{for $1 \leq k \leq 3$,}\label{e:nu_k}\\
\|D^{k} t_1\|_0 + \|D^{k} \tau_1\|_0 + \|D^{k} n_1\|_0 &\leq C \ell^{-k} \qquad\qquad\quad\;\,\mbox{for $0\leq k\leq 3$,}\label{e:tn_k}\\
\|D^k A_1^t\|_0 + \|D^k C_1^t\|_0 &\leq C \delta_{q+1} \mu^k\qquad\qquad \mbox{for $0\leq k \leq 3$,}\label{e:Gamma_quad}\\
\|D^k A_1^n\|_0 + \|D^k B_1^t\|_0 + \|D^k C_1^n\|_0 &\leq C \delta_{q+1}^{\sfrac{1}{2}} \mu^k\qquad\qquad \mbox{for $0\leq k\leq 3$,}\label{e:Gamma_lin}\\
\|D^k B_1^n\|_0 &\leq C \mu^k\qquad\qquad\quad\;\;\; \mbox{for $0\leq k \leq 3$.}\label{e:Gamma_con}
\end{align}
\end{lemma}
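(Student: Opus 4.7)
My plan for proving Lemma \ref{l:ugly_lemma_1} is to proceed in three blocks, starting from purely mollification‐type bounds on $w=u_q*\varphi_\ell$, then extracting bounds on the geometric objects $\tau_1, \nu_1, t_1, n_1$, and finally handling the compositions with $\Gamma$ via the chain rule. Throughout, the two workhorse relations are the definition of $\ell$ together with $\delta_q^{\sfrac12}\lambda_q \leq \delta_{q+1}^{\sfrac12}\ell^{-1}$ from \eqref{e:ell_lambda}, and $\mu \geq \ell^{-1}$ from \eqref{e:mu}.

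\smallskip

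First, I would derive the $w$-estimates \eqref{e:w_0}--\eqref{e:w_k}. The bound $\|Dw\|_0\leq C$ is immediate from \eqref{e:uniform_C1}. The difference estimates \eqref{e:w_0}--\eqref{e:w_1} come from Lemma \ref{l:mollify} applied with $r=0,1$, using $\|D^2 u_q\|_0\leq \bar C\delta_q^{\sfrac12}\lambda_q$ from \eqref{e:Du_2_q}; the combination $\ell\cdot\delta_q^{\sfrac12}\lambda_q$ is then absorbed into $\delta_{q+1}^{\sfrac12}$ by \eqref{e:ell_lambda}. For the higher derivative bounds \eqref{e:w_k} I would write $D^k w=D^2 u_q*D^{k-2}\varphi_\ell$ and use $\|D^{k-2}\varphi_\ell\|_{L^1}\leq C\ell^{2-k}$, again absorbing one power of $\ell$ against $\delta_q^{\sfrac12}\lambda_q$.

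\smallskip

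Next, the bounds on $\nu_1,\tau_1,t_1,n_1$. The lower bound $|\tau_1|\geq C^{-1}$ follows because $Dw^TDw=(u_q*\varphi_\ell)^\sharp e$ differs from $g$ by at most $O(\delta_{q+1})$ plus mollification error (cf.\ the computation preceding Lemma \ref{l:convolution}), hence is uniformly elliptic, while $\nabla\Phi_1$ is close to $e_1$ by \eqref{e:rho_DPhi_0}. The upper bound is analogous. For the higher derivatives I apply Proposition \ref{p:chain} viewing $\tau_1,t_1,n_1$ as smooth functions of $(Dw,\nabla\Phi_1)$, and combine the estimates \eqref{e:w_k} with the bounds \eqref{e:rho_DPhi_k} on $\Phi,\rho$: in each Leibniz/chain‐rule term the worst factor is either $\|D^{k+1}w\|_0\leq C\delta_{q+1}^{\sfrac12}\ell^{-k}$ or $\|D^{k+1}\Phi\|_0\leq C\ell^{-k}$, and the prefactor $\delta_{q+1}^{\sfrac12}\leq 1$ is harmlessly dropped to match \eqref{e:tn_k}. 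The normal $\nu_1$ gets the additional $\delta_{q+1}^{\sfrac12}$ in \eqref{e:nu_k} because the dominant contribution to $D^k\nu_1$ passes through $D^{k+1}w$ and retains that factor (the gradient of $\Phi$ is not involved in its definition).

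\smallskip

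The heart of the lemma is the composition block \eqref{e:Gamma_quad}--\eqref{e:Gamma_con}. Writing $A_1^t = \partial_\xi\Gamma^t\circ(s,\xi)$ with $s=\delta_{q+1}^{\sfrac12}|\tau_1|\rho$ and $\xi=\mu\Phi_1$, I apply the chain rule formula \eqref{e:chain2}: each term in $D^k(\Gamma^t_\xi\circ(s,\xi))$ is a product of $\partial_s^a\partial_\xi^b\Gamma^t$ evaluated at $(s,\xi)$ with factors of $D^j s$ and $D^j\xi$. Three ingredients close the estimates. (i) Proposition \ref{p:Kuiper} gives $|\partial_\xi^j\Gamma^t|\leq Cs^2\leq C\delta_{q+1}$, $|\partial_\xi^j\Gamma^n|\leq Cs\leq C\delta_{q+1}^{\sfrac12}$, and $|\partial_s\partial_\xi^j\Gamma^t|\leq Cs\leq C\delta_{q+1}^{\sfrac12}$, explaining the prefactors in \eqref{e:Gamma_quad}--\eqref{e:Gamma_con}. (ii) Derivatives of $\xi$ contribute $\mu^j\|D^j\Phi_1\|_0$, and by \eqref{e:rho_DPhi_k} combined with $\mu\geq\ell^{-1}$ these are controlled by $C\mu^j$. (iii) Derivatives of $s$ bring in the bounds already established in \eqref{e:tn_k}, which cost at most a power of $\ell^{-1}\leq\mu$; each such $s$-derivative is accompanied by an $s$-differentiation of $\Gamma$, contributing an extra $\delta_{q+1}^{\sfrac12}$, which prevents the $\delta_{q+1}$ or $\delta_{q+1}^{\sfrac12}$ prefactor from being degraded.

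\smallskip

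The main obstacle I expect is bookkeeping in step three: ensuring that every mixed term produced by the chain rule is dominated by the pure $\partial_\xi^k$-term with $(D\xi)^k$. This reduces entirely to verifying $\mu\geq\ell^{-1}$ (so that $\ell^{-1}$ factors from $Ds$ are always absorbed by $\mu$) and that each $s$-derivative of $\Gamma$ provides an extra $\delta_{q+1}^{\sfrac12}$ matching the $\delta_{q+1}^{\sfrac12}$ factor in $s$ itself. Both features are built into the construction, so once the computation is laid out carefully the estimates follow.
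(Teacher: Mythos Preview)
Your overall strategy matches the paper's proof, but there is one real gap: you never explain how the constants in \eqref{e:w_0}--\eqref{e:w_k} (and hence in everything downstream) end up \emph{independent of $\bar C$}. In your first block you invoke $\|D^2 u_q\|_0\leq \bar C\delta_q^{\sfrac12}\lambda_q$ and then absorb $\ell\cdot\delta_q^{\sfrac12}\lambda_q$ into $\delta_{q+1}^{\sfrac12}$ via \eqref{e:ell_lambda}; but this leaves the prefactor $\bar C$ sitting in front, so you would conclude $\|w-u_q\|_0\leq C\bar C\,\delta_{q+1}^{\sfrac12}\ell$, not $C\delta_{q+1}^{\sfrac12}\ell$. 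The lemma, however, demands constants depending only on $\alpha$ and $g$, and this is essential for the iteration to close (otherwise \eqref{e:u_2_q+1} would require $\bar C$ to dominate a constant depending on $\bar C$). The paper handles this by exploiting the hypothesis $a\geq a_0(\alpha,g,b,c,\bar C)$: for $a$ large one has $\bar C\leq \ell^{-\alpha/2}$, and then the \emph{sharper} middle inequality $\delta_q^{\sfrac12}\lambda_q\ell^{-\alpha/2}\leq \delta_{q+1}^{\sfrac12}\ell^{-1}$ in \eqref{e:ell_lambda} swallows the $\bar C$ into the parameter hierarchy. You should make this step explicit.

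On the composition block \eqref{e:Gamma_quad}--\eqref{e:Gamma_con}, your direct chain-rule bookkeeping is valid, but the paper avoids the case analysis you flag as ``the main obstacle'' by a rescaling trick: e.g.\ for $C_1^t$ one sets $\Psi(s,\xi):=\delta_{q+1}^{-1}\Gamma^t(\delta_{q+1}^{\sfrac12}s,\xi)$ and $U:=(|\tau_1|\rho,\mu\Phi_1)$, so that $C_1^t=\delta_{q+1}\,\Psi\circ U$ with $\|D^i\Psi\|_0\leq C(i)$ uniformly (this is exactly what Proposition \ref{p:Kuiper}(c) buys). The $\delta_{q+1}$ factor now sits outside, and a single application of \eqref{e:chain1} together with $\|D^kU\|_0\leq C\mu\ell^{1-k}\leq C\mu^k$ finishes the estimate. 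Analogous rescalings of $\Gamma^n$, $\partial_\xi\Gamma^t$, $\partial_s\Gamma^t$, etc.\ handle the remaining cases uniformly.
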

\begin{proof}
Since $\|D\Phi - {\rm Id}\|_0 \leq \frac{1}{2}$, we obviously have $\frac{1}{2} \leq |\nabla \Phi_1|\leq 2$. On the other hand  the estimate \eqref{e:3sigma} on $h_q$ of the previous section implies 
\[
g + 5\delta_{q+1} e \geq w^\sharp e \geq g - 5\delta_{q+1} e\, .
\] 
If we assume $a$ sufficiently large (depending only upon $g$, $b$ and $c$), we conclude $2g \geq w^\sharp e \geq \frac{1}{2} g$. Since $w^{\sharp}e = Dw^T Dw$, this implies that 
\[
C|\nabla \Phi_1|\geq |\tau_1|\geq C^{-1} |\nabla \Phi_1|
\] 
for a constant $C$ which depends only upon $g$, hence \eqref{e:tau_nu_0}  follows.

\medskip

{\bf Estimates on $w$.} Observe that 
\begin{align}
&\|w-u_q\|_0\leq C\ell^2\|D^2 u_q\|_0\leq C^\star \ell^2 \delta_q^{\sfrac{1}{2}} \lambda_q\\
&\|D (w-u_q)\|_0 \leq C \ell\|D^2u_q\|_0\leq C^\star \ell \delta_q^{\sfrac{1}{2}} \lambda_q .
\end{align}
If we choose $a\geq a_0(\alpha,b,c,\bar{C})$ big enough such that $\bar{C} \leq \ell^{-\sfrac{\alpha}{2}}$, then \eqref{e:w_0} and \eqref{e:w_1} follow with the help of  \eqref{e:ell_lambda}. Moreover, \eqref{e:w_1} implies \eqref{e:w_1_bis} by \eqref{e:uniform_C1}. Finally, \eqref{e:w_k} is a consequence of \eqref{e:mollify1}, i.e. $\|D^k w\|_0 \leq C\ell^{2-k} \|D^2 u_q\|_0 $, and $\bar{C}\leq \ell^{-\sfrac{\alpha}{2}}$.\\
Next, observe that $C\geq|\partial_{x_1} w \times \partial_{x_2} w|\geq C^{-1}$ (again due to $2g \geq Dw^T Dw \geq \frac{1}{2} g$). Hence \eqref{e:chain0} implies, for $k\geq 1$, 
\begin{align*}
\|D^k \nu_1\|_0&\leq C [D w]_k\|Dw\|_0 \\
&\leq C[Dw]_k\\
&\leq C \delta_{q+1}^{\sfrac{1}{2}} \ell^{-k}\, .
\end{align*} 

\medskip

{\bf Estimates on $\tau_1, t_1$ and $n_1$.} The $C^0$ estimates in \eqref{e:tn_k} are a trivial consequence of \eqref{e:tau_nu_0}. Again by Proposition \ref{p:chain} we get
\begin{align*}
\|D^k \tau_1\|_0 \leq & C \|Dw\|_0 \|D^{k+1}\Phi\|_0 + C\|D\Phi\|_0 \big(\|D^{k+1} w\|_0 + \|D^2 w\|_0^k\big)\\
\leq & C \ell^{-k} + C \delta_{q+1}^{\sfrac{1}{2}}\ell^{-k} \leq C \ell^{-k}\, .
\end{align*}
A second application of Proposition \ref{p:chain} (combined with \eqref{e:tau_nu_0}) gives the estimates
\begin{equation}\label{e:|tau|_k}
\|D^k|\tau_1|\|_0 + \|D^k |\tau_1|^{-1}\|_0 \leq  C \ell^{-k}\, .
\end{equation}
Combining \eqref{e:|tau|_k} and \eqref{e:nu_k}, from \eqref{e:product} we infer
\[
\|D^{k}n_1\|_0 \leq C \delta_{q+1}^{\sfrac{1}{2}}\ell^{-k} + C \ell^{-k} \leq C \ell^{-k}\, .
\]
We argue similarly to conclude $\|D^{k}t_1\|_0 \leq C \ell^{-k}$. 

\medskip

{\bf Remaining estimates.} The cases $k=0$ of \eqref{e:Gamma_quad}, \eqref{e:Gamma_lin} and \eqref{e:Gamma_con} are all simple consequences of Proposition \ref{p:Kuiper} and $\||\tau_1||\rho|\|_0 \leq C$. For the higher derivatives we consider first $C^t_1$. We introduce the function
\[
\Psi (s, \xi) := \delta_{q+1}^{-1} \Gamma^t (\delta_{q+1}^{\sfrac{1}{2}} s, \xi)
\]
and observe that $\|D^i \Psi\|_0 \leq C (i)$ by the estimates in Proposition \ref{p:Kuiper}(c). If we introduce the map $U = (|\tau_1|\rho, \mu \Phi_1)$ we can then write
\[
\|D^k C_1^t\|_0 = \delta_{q+1} \|D^k (\Psi \circ U)\|_0\, .
\]
On the other hand observe that
\begin{align*}
\|D^k U\|_0 &\leq C \ell^{-k} + C \mu \ell^{1-k} \\
&\stackrel{\eqref{e:mu}}{\leq} C \mu \ell^{1-k}\, .
\end{align*}
Hence, using \eqref{e:chain1} we infer
\begin{align*}
\|D^k C_1^t\|_0 &\leq C \delta_{q+1} (\mu \ell^{1-k} + \mu^k) \\
&\leq C \delta_{q+1} \mu^k\, .
\end{align*}
In case of $A_1^t$, $A^1_n$, $B_1^t$, $C_1^n$ and $B_1^n$ we apply the same argument, keeping the map $U$ as defined above, but changing $\Psi$ respectively to
\begin{align*}
\Psi (s, \xi) := & \delta_{q+1}^{-1} \partial_\xi \Gamma^t (\delta_{q+1}^{\sfrac{1}{2}} s, \xi)\\
\Psi (s, \xi) := & \delta_{q+1}^{-\sfrac{1}{2}} \partial_\xi \Gamma^n (\delta_{q+1}^{\sfrac{1}{2}} s, \xi)\\
\Psi (s, \xi) := & \delta_{q+1}^{-\sfrac{1}{2}} \partial_s \Gamma^t (\delta_{q+1}^{\sfrac{1}{2}} s, \xi)\\
\Psi (s, \xi) := & \delta_{q+1}^{-\sfrac{1}{2}} \Gamma^n (\delta_{q+1}^{\sfrac{1}{2}} s, \xi)\\
\Psi (s, \xi) := & \partial_s \Gamma^n (\delta_{q+1}^{\sfrac{1}{2}} s, \xi)\, .\qedhere
\end{align*}
\end{proof}

\subsection{Estimates on $\|v - u_q\|_0$, $\|D (v-u_q)\|_0$ and $\|D^k v\|_0$} Taking into account Proposition \ref{p:Kuiper} we obviously have
\[
\|v - w\|_0 \leq C \delta_{q+1}^{\sfrac{1}{2}} \mu^{-1}\, ,
\]
whereas by \eqref{e:w_0}
\[
\|u_q-w\|_0 \leq C \delta_{q+1}^{\sfrac{1}{2}} \ell \leq C \delta_{q+1}^{\sfrac{1}{2}}\ell^{1-\sfrac{\alpha}{2}}\leq C \frac{\delta_{q+1}}{\delta_q^{\sfrac{1}{2}} \lambda_q}\, .
\]
We therefore conclude
\begin{equation}\label{e:v-u_0}
\|u_q - v\|_0 \leq C \delta_{q+1}^{\sfrac{1}{2}} \mu^{-1} + C \frac{\delta_{q+1}}{\delta_q^{\sfrac{1}{2}}\lambda_q}\, .
\end{equation}
By Lemma \ref{l:ugly_lemma_1} we easily see that
\begin{equation}\label{e:Dv-u_q}
\|D (u_q -v)\|_0 \leq C \delta_{q+1}^{\sfrac{1}{2}}
\end{equation}
and
\begin{equation}\label{e:D^kv_0}
\|D^k v\|_0 \leq C \delta_{q+1}^{\sfrac{1}{2}} \mu^{k-1} \qquad\qquad \mbox{for $k\in \{2,3\}$}\, .
\end{equation}
Observe also that, by \eqref{e:uniform_C1}, 
\begin{equation}\label{e:uniform_C1_bis}
\|Dv\|_0 \leq C\, .
\end{equation}

\subsection{Estimates on $\|E_1\|_0$ and $\|D E_1\|_0$}\label{s:E1} Observe first that due to Proposition \ref{p:Kuiper} (b) we have
\[
(Dw + \bA_1)^T (Dw + \bA_1) = w^\sharp e + \delta_{q+1} \rho^2 \nabla \Phi_1 \otimes \nabla \Phi_1\, .
\]
Using the notation $\sym P$ for the matrix $\frac{1}{2} (P + P^T)$ we can then write
\[
E_1 = 2\sym (Dw^T (\bB_1+\bC_1)) + 2\sym (\bA_1^T (\bB_1 + \bC_1)) + (\bB_1 + \bC_1)^T (\bB_1+\bC_1)\, .
\]
We notice that, from Lemma \ref{l:ugly_lemma_1} and the estimates \eqref{e:rho_DPhi_0} and \eqref{e:rho_DPhi_k}
on $\rho$ and $\Phi$, we conclude
\begin{align}
\|\bA_1\|_0 + \mu^{-1} \|D\bA_1\|_0 \leq C \delta_{q+1}^{\sfrac{1}{2}}\, ,\\
\|\bB_1\|_0 + \|\bC_1\|_0 + \mu^{-1} \big(\|D \bB_1\|_0 + \|D \bC_1\|_0\big) \leq C \frac{\delta_{q+1}^{\sfrac{1}{2}}}{\ell \mu}\, .
\end{align}
It is therefore obvious that, since $\ell \mu \geq 1$,
\begin{align}
\|E_1\|_0 &\leq \|Dw^T \bB_1\|+\|Dw^T \bC_1\|_0 + C \frac{\delta_{q+1}}{\ell \mu}\, ,\\
\|D E_1\|_0 & \leq \|D (Dw^T \bB_1)\|_0 + \|D(Dw^T\bC_1)\|_0 + C \delta_{q+1} \ell^{-1}\, .
\end{align}
We next compute
\[
Dw^T \bB_1 =  \frac{\delta_{q+1}^{\sfrac{1}{2}}}{\mu} B_1^t (Dw^T\, t_1) \otimes (\rho \nabla |\tau_1| + |\tau_1| \nabla \rho)\, .
\]
Therefore we conclude from Lemma \ref{l:ugly_lemma_1} that
\begin{align}
\|Dw^T \bB_1\|_0 \leq & C \frac{\delta_{q+1}}{\ell \mu}\, ,\\
\|D (Dw^T \bB_1)\|_0 \leq & C \delta_{q+1} \ell^{-1}\, .
\end{align}
Recalling moreover \eqref{e:n_1} we have
\[
D n_1 = \frac{D \nu_1}{|\tau_1|} - n_1 \otimes \frac{\nabla |\tau_1|}{|\tau_1|}\,
\]
and we also conclude that
\[
Dw^T \bC_1 = \frac{C_1^t}{\mu} Dw^T Dt_1 + \frac{C_1^n}{\mu} Dw^T \frac{D\nu_1}{|\tau_1|}\, .
\]
In particular
\[
\|Dw^T \bC_1\|_0 \leq \frac{C\delta_{q+1}}{\mu \ell} + C \frac{\delta_{q+1}^{\sfrac{1}{2}}}{\mu}\delta_{q+1}^{\sfrac{1}{2}}\ell^{-1} \leq C \frac{\delta_{q+1}}{\mu \ell}\, .
\]
Similarly we conclude
\[
\|D (Dw^T \bC_1)\|_0 \leq C \delta_{q+1} \ell^{-1}\, . 
\]
Thus we infer
\begin{align}
\|E_1\|_0 \leq & C \frac{\delta_{q+1}}{\ell \mu}\, ,\\
\|DE_1\|_0 \leq & C \delta_{q+1} \ell^{-1}\, .
\end{align}

\section{Estimates on $u_{q+1}$ and $E_2$}\label{s:est_2}
Our goal in this section is to estimate the $C^0$ norms of $u_{q+1}-v$, $D u_{q+1}$, $D^2 u_{q+1}$, $E_2$ and $D E_2$. We proceed in the same way as in the previous section and begin by defining the functions
\begin{align}
A_2^t & := \partial_\xi \Gamma^t \left( \delta_{q+1}^{\sfrac{1}{2}} |\tau_2| \rho, \lambda_{q+1} \Phi_2\right)\, ,\\
A_2^n& := \partial_\xi \Gamma^n \left( \delta_{q+1}^{\sfrac{1}{2}} |\tau_2| \rho, \lambda_{q+1} \Phi_2\right)\, ,\\
B_2^t& := \partial_s \Gamma^t \left( \delta_{q+1}^{\sfrac{1}{2}} |\tau_2| \rho, \lambda_{q+1} \Phi_2\right)\, ,\\
B_2^n& := \partial_s \Gamma^n \left( \delta_{q+1}^{\sfrac{1}{2}} |\tau_2| \rho, \lambda_{q+1} \Phi_2\right)\, ,\\
C_2^t& := \Gamma^t \left( \delta_{q+1}^{\sfrac{1}{2}} |\tau_2| \rho, \lambda_{q+1} \Phi_2\right)\, ,\\
C_2^n& := \Gamma^n \left( \delta_{q+1}^{\sfrac{1}{2}} |\tau_2| \rho, \lambda_{q+1} \Phi_2\right)\, 
\end{align}
and decomposing the derivative of $u_{q+1}$ as 
\begin{align}
&Du_{q+1} =  Dv + \underbrace{A_2^t\,  t_2 \otimes \nabla \Phi_2 + A_2^n\, n_2 \otimes \nabla \Phi_2}_{=:\bA_2}\nonumber\\
&+ \underbrace{\frac{\delta^{\sfrac{1}{2}}_{q+1}}{\lambda_{q+1}} (B^t_2\, t_2 + B^n_2\, n_2) \otimes (\rho \nabla |\tau_2| + |\tau_2| \nabla \rho)}_{=:\bB_2}
+ \underbrace{\frac{1}{\lambda_{q+1}} \left(C_2^t\, Dt_2 + C_2^n Dn_2\right)}_{=:\bC_2}\, .
\end{align}

\subsection{Second technical lemma} As before we collect the estimates of the $C^0$ norm of the derivatives of the
various quantities introduced above.

\begin{lemma}\label{l:ugly_lemma_2}
Assume $\tilde{C}$ is fixed so that Lemma \ref{l:convolution} holds and $\hat{C} >1$. If $a\geq a_0 (\alpha, g, b,c, \bar{C}, \hat{C})$ for a sufficiently large $a_0$, then there are constants $C$ (depending on $\alpha$ and $g$ but not on $\bar C$) such that
\begin{equation}
{C^{-1} \leq |\tau_2|\leq C}
\end{equation}
\begin{align}
\|D^k \nu_2\|_0&\leq C \delta_{q+1}^{\sfrac{1}{2}} \mu^k\qquad\qquad\;\; \mbox{for $k\in \{1,2\}$}
\end{align}
and, for $k\in \{0,1,2\}$,
\begin{align}
\|D^{k} t_2\|_0 + \|D^{k} \tau_2\|_0 + \|D^{k} n_2\|_0 &\leq C \ell^{-k} + C \delta_{q+1}^{\sfrac{1}{2}}\mu^k\\
\|D^k A_2^t\|_0 + \|D^k C_2^t\|_0 &\leq C \delta_{q+1} \lambda_{q+1}^k\\
\|D^k A_2^n\|_0 + \|D^k B_2^t\|_0 + \|D^k C_2^n\|_0 &\leq C \delta_{q+1}^{\sfrac{1}{2}} \lambda_{q+1}^k\\
\|D^k B_2^n\|_0 &\leq C \lambda_{q+1}^k\, .
\end{align}
\end{lemma}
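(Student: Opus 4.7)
\medskip

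\textbf{Proof plan.} The proof follows the template of Lemma \ref{l:ugly_lemma_1}, with the substitutions $w \leadsto v$, $\mu \leadsto \lambda_{q+1}$, and a more restricted range of $k$ (since $v$ is only $C^3$ with bounds available up to $k=3$, the derivatives of $\nu_2$, $\tau_2$, $t_2$, $n_2$ are controlled only up to order $2$). The first task is to show that $Dv^T Dv$ is uniformly close to $g$. Using \eqref{e:E1}, property (b) of Proposition \ref{p:Kuiper}, and the definition of $h_q$ in \eqref{e:tau}, we have
\[
v^\sharp e \;=\; w^\sharp e + \delta_{q+1}\rho^2\,\nabla\Phi_1\otimes\nabla\Phi_1 + E_1 \;=\; g*\varphi_\ell - \delta_{q+2} e - \delta_{q+1}\rho^2\,\nabla\Phi_2\otimes\nabla\Phi_2 + E_1\, .
\]
The bounds from Section \ref{s:E1} give $\|E_1\|_0 \leq C\delta_{q+1}/(\ell\mu)$, and from Lemma \ref{l:mollify} $\|g-g*\varphi_\ell\|_0$ is negligible. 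Therefore, for $a$ sufficiently large (depending on $\hat C$), we get $2g \geq Dv^T Dv \geq \tfrac{1}{2}g$, and since $|\nabla\Phi_2|$ is pinched between $\tfrac{1}{2}$ and $2$ by \eqref{e:rho_DPhi_0}, the uniform two-sided bound on $|\tau_2|$ follows.

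Next, for the derivative bounds on the geometric vectors I would apply Proposition \ref{p:chain} exactly as in Lemma \ref{l:ugly_lemma_1}, plugging in the bounds on $v$ already established in Section \ref{s:est_1}: $\|Dv\|_0 \leq C$ (by \eqref{e:uniform_C1_bis}) and $\|D^k v\|_0 \leq C\delta_{q+1}^{\sfrac{1}{2}}\mu^{k-1}$ for $k\in\{2,3\}$ (by \eqref{e:D^kv_0}). The chain-rule formula \eqref{e:chain0} applied to the normalization map gives $\|D^k\nu_2\|_0 \leq C\|D^{k+1}v\|_0 + \text{(lower order)}$, which produces the claimed bound $C\delta_{q+1}^{\sfrac{1}{2}}\mu^k$ for $k\in\{1,2\}$ after absorbing quadratic interpolation terms. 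For $\tau_2 = Dv(Dv^T Dv)^{-1}\nabla\Phi_2$, differentiating once pulls in a factor of $D^2 v$ (contributing $\delta_{q+1}^{\sfrac{1}{2}}\mu^k$) or a factor of $D^{k+1}\Phi_2$ bounded by $C\ell^{-k}$ via \eqref{e:rho_DPhi_k}; this explains the sum $C\ell^{-k} + C\delta_{q+1}^{\sfrac{1}{2}}\mu^k$ in the statement. The bounds on $t_2 = \tau_2/|\tau_2|^2$ and $n_2 = \nu_2/|\tau_2|$ follow from \eqref{e:product} together with the bound on $\|D^k|\tau_2|^{-1}\|_0$, which is a routine consequence of the two-sided bound on $|\tau_2|$.

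Finally, the estimates on $A_2^t, A_2^n, B_2^t, B_2^n, C_2^t, C_2^n$ are derived by the same trick as in the proof of Lemma \ref{l:ugly_lemma_1}: one writes each quantity in the form $\delta_{q+1}^{\gamma}\,(\Psi\circ U)$ for the appropriate rescaling $\Psi(s,\xi)$ of a derivative of $\Gamma$ (so that $\|D^i\Psi\|_0$ is bounded by a universal constant via Proposition \ref{p:Kuiper}), with $U = (|\tau_2|\rho,\lambda_{q+1}\Phi_2)$. Since by the preceding steps and \eqref{e:mu} we have $\|D^k U\|_0 \leq C\lambda_{q+1}^k$ (the term $\lambda_{q+1}\|D^k\Phi_2\|_0 \leq C\lambda_{q+1}\ell^{1-k}$ being dominated by $\lambda_{q+1}^k$ thanks to $\lambda_{q+1} \geq \ell^{-1}$, and similarly $\delta_{q+1}^{\sfrac{1}{2}}\mu^k \leq \lambda_{q+1}^k$), the composition estimate \eqref{e:chain1} yields $\|D^k(\Psi\circ U)\|_0 \leq C\lambda_{q+1}^k$. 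Multiplying by the appropriate prefactor $\delta_{q+1}$ or $\delta_{q+1}^{\sfrac{1}{2}}$ or $1$ then gives all four of the remaining inequalities. The main subtlety—and the only place where a new smallness condition on $a$ (now depending additionally on $\hat C$) must be imposed—is in Step 1, verifying that $v^\sharp e$ remains close to $g$; every subsequent estimate is a direct mechanical adaptation of the argument already carried out in Lemma \ref{l:ugly_lemma_1}.
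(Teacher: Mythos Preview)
Your proposal is correct and follows exactly the approach indicated by the paper, which simply states that the arguments are ``entirely similar to the ones of Lemma \ref{l:ugly_lemma_1}, where we only need to use the estimates \eqref{e:Dv-u_q} and \eqref{e:D^kv_0} on $D^k v$ proved in the previous section and the fact that $\lambda_{q+1} \geq \mu$.'' You have spelled out precisely this: the two-sided bound on $|\tau_2|$ via closeness of $v^\sharp e$ to $g$, the chain-rule estimates for $\nu_2,\tau_2,t_2,n_2$ with the $v$-bounds replacing the $w$-bounds (yielding the extra $\delta_{q+1}^{\sfrac12}\mu^k$ term from $\|D^{k+1}v\|_0$), and the rescaled-$\Psi$ trick for the $A_2,B_2,C_2$ functions with $U=(|\tau_2|\rho,\lambda_{q+1}\Phi_2)$ and the absorption $\ell^{-1},\mu\leq\lambda_{q+1}$.
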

\begin{proof}
The arguments are entirely similar to the ones of Lemma \ref{l:ugly_lemma_1}, where we only need to use the estimates \eqref{e:Dv-u_q} and \eqref{e:D^kv_0} on $D^k v$ proved in the previous section and the fact that $\lambda_{q+1} \geq \mu$.
\end{proof}

\subsection{Estimates on $\|u_{q+1}-v\|_0$, $\|D (u_{q+1}-v)\|_0$ and $\|D^2 u_{q+1}\|_0$}
The following estimates are straightforward consequences of Lemma \ref{l:ugly_lemma_2}:
\begin{align}
\|u_{q+1} - v\|_0 \leq & C \delta_{q+1}^{\sfrac{1}{2}} \lambda_{q+1}^{-1}\, ,\label{e:additional_1}\\
\|D u_{q+1} - Dv\|_0 \leq & C\delta_{q+1}^{\sfrac{1}{2}}\, ,\label{e:additional_2}\\
\|D^2 u_{q+1}\|_0 \leq & C \delta_{q+1}^{\sfrac{1}{2}} \lambda_{q+1}\, .\label{e:additional_3}
\end{align}

\subsection{Estimates on $\|E_2\|_0$ and $\|DE_2\|_0$}
Arguing as in Section \ref{s:E1} we easily see that
\begin{align}
\|E_2\|_0 \leq & C \delta_{q+1} \frac{\mu}{\lambda_{q+1}}\, ,\\
\|D E_2\|_0 \leq & C \delta_{q+1} \mu\, .
\end{align}

\section{Proof of Proposition \ref{p:main}, Conclusion}\label{s:iter_conclusion}

Recall that
\begin{equation}\label{e:def_mu_1}
\mu := \hat{C} \frac{\delta_{q+1} \lambda_{q+1}^\alpha}{\delta_{q+2} \ell}
\end{equation}
for an appropriately large constant $\hat{C}$, depending upon $\alpha$ and $g$ (in particular not on $a$). It then follows that
\[
\|E_1\|_0 + \lambda_{q+1}^{-1} \|D E_1\|_0 \leq \frac{\sigma_0}{12} \delta_{q+2} \lambda_{q+1}^{-\alpha}\, .
\]
Hence, (recall \eqref{e:gmoll1} and \eqref{e:gmoll2}) to achieve the estimates \eqref{e:me_0_q+1} and \eqref{e:me_1_q+1} we need to verify
\[
C \delta_{q+1} \frac{\mu}{\lambda_{q+1}} \leq \frac{\sigma_0}{12} \delta_{q+2} \lambda_{q+1}^{-\alpha}\, ,
\]
which however is implied by \eqref{e:key_parameters}, which is valid provided $a$ is chosen sufficiently large.
The three remaining inequalities \eqref{e:u_0_q+1}, \eqref{e:u_1_q+1} and \eqref{e:u_2_q+1} are implied by \eqref{e:v-u_0}-
\eqref{e:uniform_C1_bis} and \eqref{e:additional_1}-\eqref{e:additional_3}.

\section{Proof of Theorem \ref{t:main}}\label{s:main}

{

\subsection{Step 1}
By using the compactness of the domain $\overline{D}$ we may assume without loss of generality that $\bar{u}$ is uniformly strictly short, that is, $g-\bar{u}^\sharp e\geq 2\bar{\delta}$ in $\overline{D}$ for some $\bar{\delta}>0$. In a first step we will apply the classical Nash-Kuiper argument to obtain a good first approximation. 

To this end recall that there exist a finite number\footnote{Although the number $N$ in this decomposition depends on $\bar{\delta}>0$, there is a geometric constant $N_*$ such that for any $x\in\overline{D}$ at most $N_*$ of the functions $\phi_i$ are non-zero. Nevertheless, this information is not required for our purposes.}  of unit vectors $e_i\in\R^2$ and corresponding amplitudes $\phi_i\in C^{\infty}(\overline{D})$, $i=1,\dots,N$ such that 
$$
g-\bar{u}^\sharp e-\bar{\delta}e=\sum_{i=1}^N\phi_i^2e_i\otimes e_i\quad \textrm{ in }\overline{D}.
$$
Define iteratively the smooth mappings $\bar{u}_0:=\bar{u}$, $\bar{u}_1,\dots,\bar{u}_N=:\tilde u$ by setting, for $i=1,\dots,N$,
\[
\tau_i := D\bar{u}_{i-1} (D\bar{u}_{i-1}^TD\bar{u}_{i-1})^{-1} e_i\, ,\quad 
\nu_i := \frac{\partial_{x_1} \bar{u}_{i-1} \times \partial_{x_2} \bar{u}_{i-1}}{|\partial_{x_1} \bar{u}_{i-1} \times \partial_{x_2} \bar{u}_{i-1}|}\, ,
\]
\begin{equation*}
t_i := \frac{\tau_i}{|\tau_i|^2}\,,\quad n_i := \frac{\nu_i}{|\tau_i|}\, .
\end{equation*}
and
\begin{equation}\label{e:Nashtwist}
\bar{u}_i (x) := \bar{u}_{i-1}(x)  + \frac{1}{\mu_i} \Gamma^t \big(\varphi_i|\tau_i|, \mu_ie_i\cdot x\big) t_i + \frac{1}{\mu_i} \Gamma^n \big(\varphi_i|\tau_i|, \mu_ie_i\cdot x\big) n_i\, . 
\end{equation}
Here the frequencies $1\leq \mu_1\leq \mu_2\leq\dots\leq \mu_N$ will be inductively defined as follows. 
Let
$$
E_i=\bar{u}_i^\sharp e-\bar{u}_{i-1}^\sharp e-\phi_i^2e_i\otimes e_i
$$
so that $\bar{u}_N^\sharp e=g-\bar{\delta}e+\sum_{i=1}^NE_i$. As in Section \ref{s:est_1} we can estimate $E_1$ as
$$
\|E_1\|_0\leq \frac{C(\bar{u})}{\mu_1}\,,\quad \|E_1\|_1\leq C(\bar{u}),
$$
where $C(\bar{u})$ is a constant depending on $\bar{u}$. By interpolation we also have 
$$
\|E_1\|_{\alpha}\leq \frac{C(\bar{u})}{\mu_1^{1-\alpha}},
$$
and moreover $\|\bar{u}-\bar{u}_1\|_0\leq C\mu_1^{-1}$. 
Therefore we can choose $\mu_1$ so that 
$$
\|E_1\|_{\alpha}\leq \frac{\sigma_1}{2N} \bar \delta,\quad \|\bar{u}-\bar{u}_1\|_0\leq \frac{\varepsilon}{2N}.
$$
Continuing, analogously we obtain
$$
\|E_2\|_0\leq \frac{C(\bar{u},\mu_1)}{\mu_2}\,,\quad \|E_2\|_1\leq C(\bar{u},\mu_1),
$$
and hence choose $\mu_2$ so that 
$$
\|E_2\|_{\alpha}\leq \frac{\sigma_1}{2N} \bar \delta,\quad \|\bar{u}_2-\bar{u}_1\|_0\leq \frac{\varepsilon}{2N}.
$$
In a similar manner we can inductively choose $\mu_i$, $i=3,\dots,N$ so that eventually we obtain
$$
\|g-\bar{\delta}e-\tilde{u}^\sharp e\|_\alpha\leq \sum_{i=1}^N\|E_i\|_{\alpha}\leq \frac{\sigma_1}{2} \bar \delta
$$
and
$$
\|\bar{u}-\tilde{u}\|_0\leq \frac{\varepsilon}{2}.
$$

\begin{remark}\label{r:embedding}
The construction above can be easily adapted to the case when $\bar{u}$ is an embedding, and in this case also $\tilde u$ will be an embedding. This is of course well-known and has been proved by Nash and Kuiper. In order to keep our paper self-contained, we nevertheless include here a short proof.

Since the construction of $\tilde u$ from $\bar{u}$ involves finite number of steps, it suffices to ensure that at each step $\bar{u}_{i}$ remains an embedding, i.e. no self-intersections are introduced. To show this, we proceed by induction and assume that $\bar{u}_{i-1}$ is an embedding. By using Proposition \ref{p:Kuiper} and the choice of vectors $t_i,n_i$ we can write \eqref{e:Nashtwist} as
$$
\bar{u}_i(x):=\bar{u}_{i-1}(x)+\frac{1}{\mu_i}w_i(x,\mu_i x),
$$
where $w_i=w_i(x,\xi)$ satisfies
\begin{equation*}
\begin{split}
\bigl[D\bar{u}_{i-1}(x)+&\partial_{\xi}w_i(x,\mu_i z)\bigr]^T\bigl[D\bar{u}_{i-1}(x)+\partial_{\xi}w_i(x,\mu_i z)\bigr]=\\
&=D\bar{u}_{i-1}(x)^TD\bar{u}_{i-1}(x)+\phi_i^2(x)e_i\otimes e_i.
\end{split}
\end{equation*}
for any $x,z$. In particular, since $\bar{u}_{i-1}$ is an immersion, there exists $\omega_1>0$ 
so that 
\begin{equation}\label{e:localinjectivity}
\bigl|(D\bar{u}_{i-1}(x)+\partial_{\xi}w_i(x,\mu_i z)\bigr)e|\geq \bigl| D\bar{u}_{i-1}(x)e\bigr|\geq \omega_1|e|
\end{equation}
for any vector $e$.  

Next, let $x,y\in \overline{D}$. By Taylor's theorem and the mean value theorem there exists $z$ on the line segment $[x,y]$ such that
$$
\bar{u}_i(x)-\bar{u}_i(y)=D\bar{u}_{i-1}(x)(x-y)+\partial_{\xi}w_i(x,\mu_i z)(x-y)+\tilde E,
$$
where 
$$
|\tilde E|\leq C\left(|x-y|^2+\frac{1}{\mu_i}|x-y|\right),
$$
and $C$ is a constant depending on the functions $\bar{u}_{i-1}(x)$ and $w_i(x,\xi)$ but not on $\mu_i$. 
Let $\rho=\frac{\omega_1}{4C}$ and choose $\mu_i>\rho^{-1}$. From \eqref{e:localinjectivity} we deduce that if $|x-y|\leq \rho$, then 
$$
|\bar{u}_i(x)-\bar{u}_i(y)|\geq \frac{\omega_1}{2}|x-y|.
$$
On the other hand, since $\bar{u}_{i-1}$ is assumed to be globally injective and $\overline{D}$ is compact, there exists $\omega_2>0$ such that
$$
|\bar{u}_{i-1}(x)-\bar{u}_{i-1}(y)|\geq \omega_2|x-y|\quad\textrm{ for all }|x-y|\geq\rho.
$$
Since obviously $\|\bar{u}_i-\bar{u}_{i-1}\|_0\leq C\mu_i^{-1}$, it follows that for sufficiently large $\mu_i$ we will also have
$$
|\bar{u}_{i}(x)-\bar{u}_{i}(y)|\geq \omega_2|x-y|\quad\textrm{ for all }|x-y|\geq\rho.
$$
In summary, we have shown that, by choosing $\mu_i$ sufficiently large, we can ensure that $\bar{u}_i$ is also an embedding. 
\end{remark}

\subsection{Step 2} In step 1 we obtained a good approximation $\tilde{u}$ in the sense that 
\eqref{e:me_0_alpha_q} from Proposition \ref{p:main} is satisfied. However, although $\tilde{u}$ is smooth, we have no information on the size of the second derivatives $D^2\tilde{u}$. Therefore in this step we obtain a further approximation $u_0$, where in addition second derivatives are controlled so that this second approximation can then be used as the starting point of an iteration with Proposition \ref{p:main}.

}

In this step we assume in addition\footnote{Indeed it could be checked directly that \eqref{e:abc_2} implies \eqref{e:abc_3} and hence \eqref{e:abc_3} is superfluous: however, proceding as we do we can spare the reader a slightly tedious computation}
\begin{equation}\label{e:abc_3}
c > \frac{2}{1-2\alpha} + \frac{1}{2b}.
\end{equation}
We show that, no matter how large $a$ is chosen, there is a map $u_0$ satisfying the assumptions \eqref{e:me_0_alpha_q} and
\eqref{e:Du_2_q} of Proposition \ref{p:main}, where the constant $\bar{C}$ in the latter estimate is however {\em independent} of $a$ (because it depends only on $g$ and $\tilde u$). We proceed as in Section \ref{s:iter_start}, except no regularization step is necessary this time. 
We set
\[
h := \frac{g- \tilde{u}^\sharp e}{\bar{\delta}} - \frac{\delta_1}{\bar{\delta}}e
\]
and apply Proposition \ref{p:conformal} to find ($C^3$) $\Phi_1$, $\Phi_2$ and $\rho$ so that
\[
h := \rho^2 (\nabla \Phi_1 \otimes \nabla \Phi_1 + \nabla \Phi_2 \otimes \nabla \Phi_2)\, .
\]
We then define 
\begin{equation*}
\tau_1 := D\tilde{u} (D\tilde{u}^T D\tilde{u})^{-1} \nabla \Phi_1\, ,
\end{equation*}
\begin{equation*}
\nu_1 := \frac{\partial_{x_1} \tilde{u} \times \partial_{x_2} \tilde{u}}{|\partial_{x_1} \tilde{u} \times \partial_{x_2} \tilde{u}|}\, ,
\end{equation*}
and
\begin{align*}
 t_1 := \frac{\tau_1}{|\tau_1|^2}\,,\quad n_1 := \frac{\nu_1}{|\tau_1|}\, .
\end{align*}
Hence we set
\begin{equation}\label{e:vbis}
v = \tilde{u} + \frac{1}{\mu} \Gamma^t \left( \bar\delta^{\sfrac{1}{2}} |\tau_1| \rho, \mu \Phi_1\right) t_1 + \frac{1}{\mu} \Gamma^n 
\left(\bar \delta^{\sfrac{1}{2}} |\tau_1|\rho, \mu \Phi_1\right)\, n_1\, .
\end{equation}
Then we define
\begin{equation*}
\tau_2 := Dv (Dv^T Dv)^{-1} \nabla \Phi_2\,,
\end{equation*}
\begin{equation*}
\nu_2 := \frac{\partial_{x_1} v \times \partial_{x_2} v}{|\partial_{x_1} v\times \partial_{x_2} v|}\, ,
\end{equation*}
and
\begin{align*}
t_2:= \frac{\tau_2}{|\tau_2|^2}\, ,\quad n_2 := \frac{\nu_2}{|\tau_2|}\, .
\end{align*}
The map $u_0$ is finally given by
\begin{equation}\label{e:u_0}
u_0= v + \frac{1}{\lambda} \Gamma^t \left( \bar \delta^{\sfrac{1}{2}} |\tau_2| \rho, \lambda\Phi_2\right) t_2 + \frac{1}{\lambda} \Gamma^n \left(\bar \delta^{\sfrac{1}{2}} |\tau_2|\rho, \lambda \Phi_2\right) n_2\, .
\end{equation}
Again we assume $\lambda \geq \mu \geq 1$. Analogous computations to the ones in Sections \ref{s:est_1} and \ref{s:est_2} lead 
to the estimates
\[
\|g-(u_{0}^\sharp e + \delta_1 e)\|_\alpha \leq C \bar \delta^{\sfrac{1}{2}} \mu^{2\alpha-1} + C \bar \delta \mu \lambda^{\alpha-1}\, 
\]
\[
\|D^2 u_0\|_0\leq C \bar \delta^{\sfrac{1}{2}} \lambda\, ,
\]
where the constant $C$ depends only on $\tilde{u}$ and $g$. We thus set
\[
\mu:= C_1 \delta_1^{-1/(1-2\alpha)} \quad \mbox{and}\quad \lambda := C_2 \mu^{1/(1-\alpha)} \delta_1^{-1/(1-\alpha)}\, .
\]
For a sufficiently large choice of $C_2$ and $C_1$ we then achieve \eqref{e:me_0_alpha_q} (recall that $\bar\delta < 1$. 

Clearly
\[
\|D^2u_0\|_0 \leq C_3 \delta_1^{- 2/(1-2\alpha)}\, ,
\]
for a constant $C_3$ which depends only upon $\tilde{u}, g$ and $\alpha$. In order to show that \eqref{e:Du_2_q} is satisfied with a constant
$\bar{C}$ independent of $a$, it suffices to show that
\[
\delta_1^{- 2/(1-2\alpha)} \leq \delta_0^{\sfrac{1}{2}} \lambda_0\, .
\]
Taking the logarithms in base $a$ the latter inequality is implied by
\[
cb \geq \frac{1}{2} + \frac{2}{1-2\alpha} b\, .
\]

\subsection{Step 3} {Finally we are ready for the iteration based on Proposition \ref{p:main}.} Fix any $\alpha$, $b$ and $c$ which satisfies \eqref{e:abc_1}, \eqref{e:abc_2} and \eqref{e:abc_3}. Then, for any sufficiently large $a$, we can construct a map $u_0$ as in the previous step which satisfies $\|\bar{u} - u_0\|_0< \frac{\varepsilon}{2}$ and the assumptions of Proposition \ref{p:main}, with a constant $\bar{C}$ which does not depend on $a$. We can apply Proposition \ref{p:main} to generate $u_1$. Using \eqref{e:interpolation_explicit} we conclude
\begin{align}
\|g_1 - u_1^\sharp e\|_\alpha \leq & \|g_1-u_1^\sharp e\|_0 + 2\|g_1-u_1^\sharp e\|_0^{1-\alpha} \|D (g_1-u_1^\sharp e)\|_0^\alpha
\nonumber\\
\leq & \sigma_0 \delta_2\, .
\end{align}
Hence $u_1$ satisfies again the assumptions of Proposition \ref{p:main}. More generally, the proposition can be applied inductively to generate a sequence $(u_q)_{q\geq0}$. Observe that \eqref{e:u_0_q+1}-\eqref{e:u_2_q+1} imply that
\begin{itemize}
\item $(u_q)_{q\geq0}$ converges uniformly to a map $u$ which (assuming $a$ sufficiently large) satisfies $\|u_0 - u\|_0 < \frac{\varepsilon}{2}$. By assumption on $u_0$ we therefore have $\|\bar u - u \|_0 < \varepsilon$.
\item Interpolating $\|D (u_{q+1} - u_q)\|\leq C \delta_{q+1}^{\sfrac{1}{2}}$ and 
\begin{align*}
\|D^2 (u_{q+1} - u_q)\|_0 \leq &\|D^2 u_{q+1}\|_0 + \|D^2 u_q\|_0 \leq \bar{C} \delta_{q+1}^{\sfrac{1}{2}} \lambda_{q+1} +
\bar{C} \delta_q^{\sfrac{1}{2}} \lambda_q\\
\leq&  2 \bar{C} \delta_{q+1}^{\sfrac{1}{2}} \lambda_{q+1}
\end{align*}
shows
\[
\|D (u_{q+1} - u_q)\|_\beta \leq C^\star \delta_{q+1}^{\sfrac{1}{2}} \lambda_{q+1}^\beta\, ,
\]
for a constant $C^\star$ which depends on $\alpha, g$ and $\bar{C}$.
Hence using the definitions \eqref{e:delta_lambda} of $\delta_q$ and $\lambda_q$ we can see that if $\beta < \frac{1}{2bc}$
then $(u_q)_{q\geq 0}$ is a Cauchy sequence on $C^{1,\beta}$.
\end{itemize}
We next show that, if $\alpha$ is chosen arbitrarily small, $bc$ can be chosen arbitrarily close to $\frac{5}{2}$, which in turn implies that $\beta$ can be made arbitrarily close to $\frac{1}{5}$. Indeed if we let $\alpha\downarrow 0$, the conditions \eqref{e:abc_1}, \eqref{e:abc_2} and \eqref{e:abc_3} become, respectively
\begin{align}
b > & 1\label{e:abc_11}\\
c > & \frac{4b^2-3b-1}{2b (b-1)} = 2 + \frac{1}{2b}\label{e:abc_12}\\
c > & 2 + \frac{1}{2b}\, .
\end{align}
This completes the proof.

\section{Proof of Theorem \ref{t:main2}}\label{s:main2}

First of all we notice that, by classical extension theorems, the first statement can be reduced to Theorem \ref{t:main}: it suffices
to extend both $g$ and $\bar{u}$ smoothly from $\bar D_1$ to $\bar D_2$. The extended map is not necessarily short for the extended metric, but we
can ensure this if we add to the extension of $g$ a tensor of the form $\varphi (|x|) e$, where $\varphi$ is a rapidly growing $C^\infty$ function which vanishes identically on $[0,1]$. 

Next, observe that the arguments of the Steps 2,3 and 4 in Section \ref{s:main}, combined with the extension trick outlined above give in fact the following corollary. 

\begin{corollary}\label{c:main}
Let $g$ be a $C^2$ metric on $\overline{D}_1$. Then there are positive constants $C_0, \bar{c}$ and $\bar{\eta}$ with the following properties. Assume that 
\begin{itemize}
\item[(i)] $\underline u: \overline{D}_1 \to \R^3$ is $C^\infty$,
\item[(ii)] $\|g - (\underline u^\sharp e + 2 \eta e)\|_0\leq \bar{c} \eta$ for some $\eta\in (0, \bar\eta)$.
\end{itemize}
Then {for any $\varepsilon >0$ and $\delta>0$} there is an {isometric map} $u\in C^{1, 1/5-\delta} (\overline{D}_1)$ such that
$\|Du - D\underline u\|_0 \leq C_0 \eta^{\sfrac{1}{2}}$ and $\|u- \underline u\|_0 \leq \varepsilon$.
\end{corollary}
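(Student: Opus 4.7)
The plan is to follow Steps 2 and 3 of Section \ref{s:main} with $\underline u$ (suitably regularized) playing the role of $\tilde u$, combined with the extension trick from the start of this section so that Proposition \ref{p:main} may be iterated on a slightly larger disk. The parameter $\bar\delta$ of Step 2 will be taken to be $\eta$, and the crucial observation is that every derivative change performed by Nash--Kuiper type twists is controlled only by its amplitude, which scales as $\eta^{\sfrac{1}{2}}$, and not by the frequency; this is what makes the final constant $C_0$ uniform in $\underline u$.

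Choosing $\bar c\leq 1$, hypothesis (ii) implies $g - \underline u^\sharp e \geq \eta\,e$ pointwise. Since Proposition \ref{p:conformal} needs $C^\alpha$-smallness of the normalized metric defect while (ii) provides only $C^0$-smallness, I would first insert a preparatory Nash--Kuiper step in the spirit of Step 1 of Section \ref{s:main}: write
\[
g - \underline u^\sharp e - \eta\,e \;=\; \sum_{i=1}^N \phi_i^2\, e_i\otimes e_i, \qquad \phi_i = O(\eta^{\sfrac{1}{2}}),
\]
and successively add $N$ twists built via Proposition \ref{p:Kuiper}, choosing the frequencies $\mu_i$ large enough (depending on $\underline u$) so that the resulting smooth map $\tilde u$ satisfies $\|g - \tilde u^\sharp e - \eta\,e\|_\alpha \leq \tfrac{\sigma_1}{2}\eta$ and $\|\tilde u - \underline u\|_0 \leq \varepsilon/3$. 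Proposition \ref{p:Kuiper}(c) guarantees the uniform bound $\|D\tilde u - D\underline u\|_0 \leq C\eta^{\sfrac{1}{2}}$ independently of the $\mu_i$; this is exactly why the dependence on $\underline u$ may be pushed entirely into the free parameter $\varepsilon$.

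Next, fix exponents $\alpha,b,c$ satisfying \eqref{e:abc_1}--\eqref{e:abc_3} and take $a$ large enough both for the iteration of Proposition \ref{p:main} to run and for $\delta_1 = a^{-b} \leq \eta$; the two constraints are compatible once $\bar\eta$ is chosen small enough depending on $g,\alpha,b,c,\bar C$. Applying Step 2 of Section \ref{s:main} to $\tilde u$ with $\bar\delta := \eta$ yields $u_0\in C^\infty$ satisfying \eqref{e:me_0_alpha_q}--\eqref{e:Du_2_q} at $q=0$, with $\|u_0 - \tilde u\|_0 \leq \varepsilon/3$ (by taking the second-layer frequency $\lambda$ large) and $\|Du_0 - D\tilde u\|_0 \leq C\eta^{\sfrac{1}{2}}$ (since the two primitive-metric twists have amplitude $\sim\eta^{\sfrac{1}{2}}$). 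Step 3 of Section \ref{s:main} then produces an isometric $u\in C^{1,\sfrac{1}{5}-\delta}(\overline{D}_1)$ with $\|u - u_0\|_0 \leq \varepsilon/3$ and, summing \eqref{e:u_1_q+1},
\[
\|Du - Du_0\|_0 \;\leq\; C\sum_{q\geq 0}\delta_{q+1}^{\sfrac{1}{2}}\;\leq\; C\delta_1^{\sfrac{1}{2}}\;\leq\; C\eta^{\sfrac{1}{2}}
\]
by the super-geometric decay of $\delta_q$ for $b>1$. The shrinking-domain issue inherent in Proposition \ref{p:main} is handled exactly as in Section \ref{s:main2}, by first extending $\underline u$ and $g$ to $\overline{D}_2$ (augmenting $g$ on the annulus by a rapidly growing $\varphi(|x|)e$ to preserve shortness). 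Summing the three pieces gives $\|u-\underline u\|_0 < \varepsilon$ and $\|Du - D\underline u\|_0 \leq C_0\eta^{\sfrac{1}{2}}$. The main obstacle is precisely the $C^0$-to-$C^\alpha$ mismatch before invoking Proposition \ref{p:conformal}; the preparatory Nash--Kuiper step resolves it because its uniform derivative bound depends only on amplitudes, not frequencies.
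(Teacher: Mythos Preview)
Your proposal is correct and follows the same route as the paper, which merely asserts that the corollary follows from ``the arguments of the Steps 2, 3 and 4 in Section \ref{s:main}'' combined with the extension trick. You have additionally identified and correctly filled the $C^0$-to-$C^\alpha$ gap (needed before invoking Proposition \ref{p:conformal} in Step~2) by inserting a preparatory Nash--Kuiper pass in the spirit of Step~1; since no ``Step 4'' exists this is almost certainly what the paper intended, and your key observation---that each twist changes $Du$ by $O(\text{amplitude}) + O(\text{frequency}^{-1})$, so that the cumulative change is $O(\eta^{\sfrac{1}{2}})$ with $C_0$ depending only on $g$---is exactly the point that makes the constant uniform in $\underline u$.
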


With this corollary at hand we can prove Theorem \ref{t:main2} in two easy steps. {In the proof we will restrict to the case of embeddings, the case of immersions can be obtained by easy modifications.}

\medskip

{\bf Proof of Theorem \ref{t:main2} for embeddings.} 
Let $g$ be a $C^2$ metric on $\overline{D}_1$ and $\bar{u}\in C^1(\overline{D}_1,\R^3)$ a short embedding. By a simple rescaling and mollification we may assume without loss of generality that $\bar{u}$ is smooth and strictly short. 
Next, fix $\omega>0$ such that $g \geq 16 \omega^2 e$ and choose $\eta>0$ such that $\eta \leq \min \{\omega^2, \bar \eta\}$ and $C_0 \eta^{\sfrac{1}{2}} \leq \omega$.  

As in Step 1 of the proof of Theorem \ref{t:main} (including Remark \ref{r:embedding}) we first construct a smooth embedding $\underline{u}$ with
$$
\|\underline{u} - \overline u\|_0 < \frac{\varepsilon}{2}
$$
and such that
$$
\|g - (\underline u^\sharp e + 2 \eta e)\|_0\leq \bar{c} \eta.
$$
Then the assumptions of Corollary \ref{c:main} are satisfied and we obtain 
$u\in C^{1, 1/5-\delta} (\overline{D}_1)$ with $u^\sharp e = g$ and such that
$\|Du - D\underline u\|_0 \leq C_0 \eta^{\sfrac{1}{2}}$ and $\|u- \underline u\|_0 \leq \varepsilon/2$.

To complete the proof, it remains to show that the map $u$ is an embedding. We again remark that this argument is well-known and is contained in the works of Nash and Kuiper.
First of all, since $\underline{u}$ is $C^1$, there exists $\rho>0$ such that $|D\underline u (z)- D\underline u (y)|\leq \omega$ if $|z-y|\leq \rho$. On the other hand, since $\underline {u}$ is an embedding, then there is $\zeta>0$ such that $|\underline u(z)- \underline  u(y)|\geq 3\zeta$ if $|z-y|\geq \rho$. 

To show global injectivity, we now observe that
\[
|u(z)- u(y)|\geq |\underline u (z) - \underline u (y)| - 2\varepsilon \geq 3\zeta - 2\zeta = \zeta \qquad \mbox{when $|z-y|\geq \rho$.}
\]
On the other hand, if $|z-y| \leq \rho$ we know that
\[
|Du (z) - Du (y)| \leq |D\underline u (z) - D \underline u (y)| + 2 \omega \leq 3 \omega\, ,
\]
and hence, using Taylor's formula
\[
|u(z) - u(y) - Du (z)(z-y)| \leq 3 \omega |z-y|\, .
\]
We therefore can estimate
\[
|u(z)-u(y)|\geq |Du (z)(z-y)| - 3 \omega |z-y|
\]
But $u^\sharp e = g \geq 16 \omega^2 e$ implies $|Du (z)(z-y)|^2 \geq 16 \omega^2|z-y|^2$, which in turn shows $|u(z)-u(y)|\geq \omega |z-y|> 0$. 

This completes the proof of Theorem \ref{t:main2}.

\appendix

\section{Proof of Proposition \ref{p:conformal}}

\subsection{Beurling and Cauchy transforms} We will need the following two classical integral operators to construct the coordinate transformation of Proposition \ref{p:conformal}. In this section we use the standard notation $z=x+iy$ for complex numbers.
Moreover, we recall two standard differential operators $\partial_z = \frac{1}{2}\left( \partial_x-i\partial_y\right)$ and $\partial_{\bar{z}} = \frac{1}{2}\left( \partial_x+i\partial_y\right)$.
 \begin{definition}
 Suppose $G\subset \C$ is a bounded smooth open set and $f:G\rightarrow \C$ a function. For $z_0\in \C$  we define the \emph{Cauchy transform}
 \[ \sC_G[f](z_0) \coloneqq -\frac{1}{\pi} \int_G \frac{f(z)}{z-z_0}\, dx\, dy \]
 and the \emph{Beurling transform}
 \[ \sS_G[f](z_0) \coloneqq -\frac{1}{\pi} \int_G \frac{f(z)}{(z-z_0)^2}\, dx\, dy\, .\]
 The latter integral must be understood as a Cauchy principal value, in case it exists. As it is easy to check, the H\"older continuity of $f$ is enough to guarantee its existence at every point.
 \end{definition}
\begin{remark}
In the literature the terms Cauchy and Beurling transforms are often used only the operators $\sC_{\C}$ and $\sS_{\C}$.
\end{remark}
In the book of I. N. Vekua \cite{vek62} one can find the following very important properties of the operators $\sC_G$ and $\sS_G$ (cf.  \cite[Theorem 1.32]{vek62}).
 \begin{lemma}\label{vekua}	
 Let $N\in \N$, $0<\alpha<1$, $G\subset \C$ bounded and $f\in C^{N,\alpha}(\overline{G})$. Then we have
 \begin{enumerate}[(i)]
 \item $\sC_G[f] \in C^{N+1,\alpha}(\overline{G})$ and $\sS_G[f] \in C^{N,\alpha}(\overline{G})$;\\
 \item $\frac{\partial}{\partial\overline{z}}\sC_G[f](z) = f(z)$ and $\frac{\partial}{\partial z}\sC_G[f](z) = \sS_G[f](z)$ $\forall z\in G$;\\
 \item There exists a constant $C_{N,\alpha}$ such that
 \[\|\sS_G[f]\|_{N + \alpha}\leq \|\sC_G[f]\|_{N+1 + \alpha}\leq C_{N,\alpha}\|f\|_{N + \alpha}\, .\]
 \end{enumerate}
 \end{lemma}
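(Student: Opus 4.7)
The plan is to follow the classical route in Vekua's monograph, splitting the statement into three steps: first the distributional identities of (ii), then the base case $N=0$ of (i) and (iii), then an induction on $N$.

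For (ii), the key observation is that $-1/(\pi z)$ is a fundamental solution of the operator $\partial_{\bar z}$ in $\C$. Extending $f$ by zero outside $G$ and viewing $\sC_G[f]$ as a convolution, the identity $\partial_{\bar z}\sC_G[f]=f$ on $G$ follows distributionally: testing against $\phi\in C_c^\infty(G)$ and applying Fubini reduces everything to the fundamental solution property. For $\partial_z\sC_G[f]=\sS_G[f]$, I would justify differentiation under the integral sign via the standard subtraction trick, writing
$$\sS_G[f](z_0) = -\frac{1}{\pi}\int_G \frac{f(z)-f(z_0)}{(z-z_0)^2}\,dA(z) - \frac{f(z_0)}{\pi}\int_G \frac{dA(z)}{(z-z_0)^2}\, ,$$
with the first integrand absolutely controlled by $[f]_\alpha|z-z_0|^{\alpha-2}$, and the second piece a fixed holomorphic function of $z_0$ that poses no differentiation issue.

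For the base case $N=0$ of (i) and (iii), part (ii) reduces the task to showing that $\sS_G:C^\alpha(\overline G)\to C^\alpha(\overline G)$ is bounded with constant depending only on $\alpha$ and $G$. The classical two-scale argument proceeds as follows: for $z_0,z_1\in\overline G$ with $d:=|z_0-z_1|$ small, split the domain of integration into $B:=B(z_0,2d)\cap G$ and $G\setminus B$. In $B$ one estimates each term of $\sS_G[f](z_0)-\sS_G[f](z_1)$ separately using the subtraction trick, the integrand being bounded by $[f]_\alpha|z-z_0|^{\alpha-2}$, producing a contribution of order $[f]_\alpha d^\alpha$. In $G\setminus B$ one applies the mean value theorem to the kernel $(z-\zeta)^{-2}$, reducing the integral to one with a logarithmically controlled singularity of the same Hölder order. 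The $C^0$ bound is obtained by the analogous one-point estimate.

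For the inductive step $N\to N+1$, I would realise $\sS_G$ as the restriction to $G$ of the full-plane Beurling transform $\sS_\C$ applied to a compactly supported Whitney-type extension $\tilde f$ of $f$, plus a residual integral operator with $C^\infty$ kernel coming from the difference between the plane and the restricted domain. Since $\sS_\C$ commutes with $\partial_z$ and $\partial_{\bar z}$ by translation invariance, derivatives of $\sS_G[f]$ reduce to $\sS_G$ applied to derivatives of $f$, plus innocuous smoothing terms; iterating and invoking the base case closes the induction. The main obstacle is precisely the boundary behaviour in this extension/commutation argument: one must verify that the extension preserves $C^{N,\alpha}$ norms with constants depending only on $N,\alpha$ and the geometry of $G$, and that the residual smoothing operator has a kernel bounded uniformly in the relevant norms. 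Both points are where the proof becomes technical and where following Vekua's local coordinate analysis most directly pays off.
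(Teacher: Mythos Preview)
The paper does not actually prove this lemma; it simply cites Vekua's monograph (Theorem~1.32 in \cite{vek62}) for all three assertions. Your proposal is therefore not being compared against an argument in the paper but rather supplying one that the authors chose to omit.

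The sketch you give is the standard route and is essentially sound for (ii) and for the base case $N=0$ of (i) and (iii). One imprecision worth flagging in the inductive step: you describe the difference between $\sS_G[f]$ and the restriction of $\sS_\C[\tilde f]$ as an operator ``with $C^\infty$ kernel''. That is only true for $z_0$ in the interior of $G$; as $z_0\to\partial G$ the kernel $(z-z_0)^{-2}$ develops a singularity on the boundary of the integration region $\C\setminus G$, so the residual term is \emph{not} smoothing up to $\partial G$. You correctly identify boundary behaviour as the main obstacle, but the cure is not a smooth-kernel remainder: one either integrates by parts to obtain an identity of the type in Lemma~\ref{c-p-formula} (producing a line integral over $\partial G$, whose estimate uses the assumed smoothness of $G$), or one works entirely with the full-plane operator applied to the Whitney extension and restricts only at the very end, never forming a truncated residual at all.
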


Property (iii) will be key in order to prove Proposition \ref{p:conformal}. Observe that we can easily find solutions of equations of the type $f_{\overline{z}} =g$ by setting $f=\sC_G[g]$. 
Moreover, we have $\partial_{\overline{z}} \sS_G[f]=f_z$, so  $\sS_G$ links the two operators $\partial_{\overline{z}}$ and $\partial_z$.
To prove regularity and get good estimates we need one more thing, namely that under suitable circumstances the transforms commute with differentiation. This will be the content of Corollary \ref{commute}. 
 \begin{lemma}\label{c-p-formula}
 Let $r>0$ and $f\in C^1(\bar D_r)$. Then for any $z_0\in D_r$ we have the identities
    \begin{align}\label{CPF}
	f(z_0) &= \dfrac{1}{2\pi i} \int_{\partial D_r} \dfrac{f(z)}{z-z_0}\, dz - \frac{1}{\pi}\int_{D_r}\frac{f_{\overline{z}}(z)}{z-z_0}\, dx\, dy\, ,\\ 
	\frac{1}{\pi}\int_{D_r}\frac{f(z)}{(z-z_0)^2}\, dx\, dy &=\frac{1}{\pi}\int_{D_r}\frac{f_z(z)}{z-z_0}\, dx\, dy+\frac{1}{2\pi i}\int_{\partial D_r}\frac{f(z)}{z-z_0}\, d\overline{z}\, .
    \end{align}
 \end{lemma}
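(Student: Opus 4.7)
Both identities are instances of the Cauchy–Pompeiu formula, and the plan is to derive them simultaneously from Stokes' theorem (in its complex guise) applied on the punctured disk $D_r \setminus \overline{D}_\varepsilon(z_0)$, followed by the limit $\varepsilon \downarrow 0$. Recall the two basic identities $d(g\,dz) = 2i\, g_{\bar z}\, dx\, dy$ and $d(g\,d\bar z) = -2i\, g_z\, dx\, dy$, which follow from $dz \wedge d\bar z = -2i\, dx\, dy$. These will be applied to the two $1$-forms $\omega_1 := \frac{f(z)}{z-z_0}\, dz$ and $\omega_2 := \frac{f(z)}{z-z_0}\, d\bar z$, both of which are $C^1$ on the punctured disk since $1/(z-z_0)$ is holomorphic there.

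For the first identity, apply Stokes to $\omega_1$ on $D_r \setminus \overline{D}_\varepsilon(z_0)$. Because $\partial_{\bar z}\bigl(\tfrac{1}{z-z_0}\bigr)=0$ away from $z_0$, we obtain
\[
\int_{\partial D_r} \frac{f(z)}{z-z_0}\, dz - \int_{\partial D_\varepsilon(z_0)} \frac{f(z)}{z-z_0}\, dz = 2i \int_{D_r \setminus D_\varepsilon(z_0)} \frac{f_{\bar z}(z)}{z-z_0}\, dx\, dy\, .
\]
Parametrizing $\partial D_\varepsilon(z_0)$ by $z = z_0 + \varepsilon e^{i\theta}$, $dz = i\varepsilon e^{i\theta}\, d\theta$, the inner boundary term becomes $i\int_0^{2\pi} f(z_0 + \varepsilon e^{i\theta})\, d\theta$, which tends to $2\pi i f(z_0)$ by continuity of $f$. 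The bulk integral on the right is absolutely convergent (the integrand is in $L^1_{\mathrm{loc}}$ since $1/|z-z_0|$ is), so it converges to the integral over $D_r$. Dividing by $2\pi i$ yields the Cauchy–Pompeiu formula.

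For the second identity, apply Stokes to $\omega_2$ on the same punctured disk. On that domain, $\partial_z\bigl(\tfrac{f(z)}{z-z_0}\bigr) = \tfrac{f_z(z)}{z-z_0} - \tfrac{f(z)}{(z-z_0)^2}$, so Stokes gives
\[
\int_{\partial D_r} \frac{f(z)}{z-z_0}\, d\bar z - \int_{\partial D_\varepsilon(z_0)} \frac{f(z)}{z-z_0}\, d\bar z = -2i \int_{D_r \setminus D_\varepsilon(z_0)} \left[\frac{f_z(z)}{z-z_0} - \frac{f(z)}{(z-z_0)^2}\right] dx\, dy\, .
\]
The boundary integral around $z_0$ is, after parametrization, $-i\int_0^{2\pi} f(z_0 + \varepsilon e^{i\theta})\, e^{-2i\theta}\, d\theta$, which tends to $-i f(z_0)\int_0^{2\pi} e^{-2i\theta}\, d\theta = 0$ by continuity. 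Rearranging and dividing by $-2i/(\text{appropriate factor})$ (bookkeeping the $1/\pi$ normalizations) yields the second identity.

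The only delicate point is the limit in the last display: the integrand $f(z)/(z-z_0)^2$ is not absolutely integrable, so one must interpret the integral on the left-hand side of the lemma as a principal value, defined precisely as $\lim_{\varepsilon\downarrow 0}\int_{D_r \setminus D_\varepsilon(z_0)}$. Existence of this limit is guaranteed by the computation above, since every other term in the identity has a clean limit. In fact, writing $f(z) = f(z_0) + O(|z-z_0|)$ near $z_0$ (which is valid since $f\in C^1$), one sees that the singular part $f(z_0)\int_{D_r\setminus D_\varepsilon(z_0)}(z-z_0)^{-2}\,dx\,dy$ vanishes by symmetry on each annulus, while the remainder is integrable — so the principal value is well-defined and the Stokes argument gives exactly the stated formula.
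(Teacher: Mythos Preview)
Your proof is correct and follows essentially the same approach as the paper: apply Stokes' theorem on the punctured disk $D_r\setminus D_\varepsilon(z_0)$ to the $1$-forms $\tfrac{f(z)}{z-z_0}\,dz$ and $\tfrac{f(z)}{z-z_0}\,d\bar z$, then send $\varepsilon\to 0$. You are in fact more explicit than the paper, which dispatches the second identity with ``the same reasoning'' and does not spell out the principal-value interpretation of the $(z-z_0)^{-2}$ integral.
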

 \begin{proof}
 Take a fixed $z_0\in D_r$ and look at the differential one-form $ \omega = \frac{dz}{z-z_0}$. We can see that
      \[d(\omega f) = \frac{f_{\overline{z}}}{z-z_0}\, d\overline{z}\wedge dz = 2i \frac{f_{\overline{z}}}{z-z_0}\, dx\wedge dy\, ,\]
 hence by Stoke's theorem we have 
	\begin{equation}\label{1}
	    2i\int_{D_r\setminus D_\varepsilon}\frac{f_{\overline{z}}(z)}{z-z_0}\, dx\, dy = \int_{\partial D_r} \frac{f(z)}{z-z_0}\, dz - \int_{\partial D_\varepsilon} \frac{f(z)}{z-z_0}\, dz\, .
	\end{equation}
 We can easily compute
     \[\lim\limits_{\varepsilon\rightarrow 0}{\int_{\partial D_\varepsilon} \frac{f(z)}{z-z_0}dz} = 2\pi i f(z_0)\, ,\] 
 and therefore passing to the limit $\varepsilon\rightarrow 0 $ in \eqref{1} yields the first statement; 
 the same reasoning applied to the one-form $\tilde{\omega} = \frac{d\overline{z}}{z-z_0}$ shows the second. 
 \end{proof}

\begin{remark}
 Observe that if we define $\Psi(z_0) = \frac{1}{2\pi i} \int_{\partial D_r} \frac{f(z)}{z-z_0}\, dz$ then the statements of the previous lemma can be rewritten as
      \begin{align*}
	  f(z_0) &= \Psi(z_0) + \sC_{D_r}[f_{\overline{z}}](z_0)\, ,\\
	  \sS_{D_r}[f](z_0) &= \sC_{D_r}[f_z](z_0)-\frac{1}{2\pi i}\int_{\partial D_r}\frac{f(z)}{z-z_0}\, d\overline{z}\, .
      \end{align*}
 \end{remark}

 \begin{remark}
It follows from Lemma \ref{c-p-formula} that if $f\in C^1_0(\bar D_r)$, then 
    \begin{enumerate}[(i)]
    \item  $\sC_{D_r}[f_{\overline{z}}] = f\, ,$
    \item  $\sC_{D_r}[f_z] = \sS_{D_r}[f]\, .$
    \end{enumerate}
Combining these two identities with Lemma \ref{vekua} we can derive 
 \[
(\sC_{D_r}[f])_z = \sS_{D_r}[f] = \sC_{D_r}[f_z]\, ,
\]
\[
(\sC_{D_r}[f])_{\overline{z}}  = f = \sC_{D_r}[f_{\overline{z}}]\, ,
\]
and 
\[
(\sS_{D_r}[f])_z =  (\sC_{D_r}[f_z])_z =  \sS_{D_r}[f_z]\, ,
\]
\[
(\sS_{D_r}[f])_{\overline{z}} = (\sC_{D_r}[f_z])_{\overline{z}} = \sC_{D_r}[(f_z)_{\overline{z}}]   = \sC_{D_r}[(f_{\overline{z}})_z] = \sS_{D_r}[f_{\overline{z}}]\, .
\]
This shows that for (sufficiently regular) functions with compact support in $D_r$, the operators $\sC_{D_r}$ and $\sS_{D_r}$ commute with any linear differential operator $\sD$ with constant coefficients. The regularity needed on the function is only linked to the order of the operator $\sD$.
\end{remark}

We summarize the latter discussion in the following
 
\begin{corollary}\label{commute}
Let $r>0$ and let $\sD$ be a linear differential operator with constant coefficients of order $k$. Then we have the following identities on $C^k_c (D_r)$:
    \begin{enumerate}[(i)]
    \item  $ \sD\circ \sC_{D_r} = \sC_{D_r}\circ \sD$ and $\sD\circ \sS_{D_r} = \sS_{D_r}\circ \sD$;
    \item  $ \partial_{\zb} \circ \sC_{D_r} = \sC_{D_r} \circ \partial_{\zb} = Id$ and $\partial_{z} \circ \sC_{D_r} = \sC_{D_r} \circ \partial_{z} = \sS_{D_r}$;
    \item  $ \partial_{\zb} \circ \sS_{D_r} = \sS_{D_r} \circ \partial_ {\zb} = \partial_{z}$.
    \end{enumerate}
\end{corollary}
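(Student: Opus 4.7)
The plan is to use (ii) and (iii) as the core computation---they essentially package the content of the remark immediately preceding the corollary---and then to deduce (i) by factoring the constant-coefficient operator $\sD$ as a polynomial in $\partial_z$ and $\partial_{\zb}$ and iterating.

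The two basic ingredients would be: first, Lemma \ref{vekua}, which gives the ``free'' identities $\partial_{\zb}\sC_{D_r}[f] = f$ and $\partial_z \sC_{D_r}[f] = \sS_{D_r}[f]$ for any H\"older $f$ with no hypothesis on its support; second, Lemma \ref{c-p-formula}, which for $f \in C^1_c(D_r)$ has vanishing boundary integrals, yielding the ``compact-support'' identities $\sC_{D_r}[\partial_{\zb} f] = f$ and $\sC_{D_r}[\partial_z f] = \sS_{D_r}[f]$. Combining each such free identity with its compact-support counterpart gives (ii) at once. For (iii), I would simply compute
\[
\partial_{\zb}\sS_{D_r}[f] \;=\; \partial_{\zb}\sC_{D_r}[\partial_z f] \;=\; \partial_z f
\]
using (ii) (valid since $\partial_z f \in C^1_c$ when $f \in C^2_c$), and dually
\[
\sS_{D_r}[\partial_{\zb} f] \;=\; \sC_{D_r}[\partial_z \partial_{\zb} f] \;=\; \sC_{D_r}[\partial_{\zb}(\partial_z f)] \;=\; \partial_z f,
\]
the last identity again by (ii) applied to $\partial_z f$.

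For (i), the first step would be to write $\partial_x = \partial_z + \partial_{\zb}$ and $\partial_y = i(\partial_z - \partial_{\zb})$ and thereby rewrite $\sD = \sum_{i+j\leq k} c_{ij}\,\partial_z^i \partial_{\zb}^j$. By linearity it suffices to handle a single monomial, and I would prove $\partial_z^i \partial_{\zb}^j \circ \sC_{D_r} = \sC_{D_r} \circ \partial_z^i \partial_{\zb}^j$ on $C^{i+j}_c(D_r)$ by induction on $i+j$, peeling one derivative off at a time with the appropriate half of (ii). The same induction, with (iii) in place of (ii), gives the corresponding statement for $\sS_{D_r}$. The hypothesis $f\in C^k_c(D_r)$ is calibrated exactly so that at each step the intermediate function (on which one applies the compact-support identity) still lies in some $C^m_c(D_r)$ with $m\geq 1$.

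The only subtlety I anticipate is bookkeeping: each of (ii) and (iii) splits into one identity that holds without a support assumption and one that requires compact support, so in the induction for (i) one must be careful always to peel off the outermost derivative using the ``free'' version and to use the compact-support version only for derivatives hitting $f$ itself. Beyond this combinatorial care, the proof is entirely mechanical; no additional analytic input beyond Lemmas \ref{vekua} and \ref{c-p-formula} is needed.
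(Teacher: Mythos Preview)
Your proposal is correct and follows exactly the paper's approach: the paper's ``proof'' is the remark immediately preceding the corollary, which derives (ii) and (iii) by combining the free identities of Lemma~\ref{vekua} with the compact-support identities coming from Lemma~\ref{c-p-formula}, and then observes that (i) follows since any constant-coefficient operator is a polynomial in $\partial_z,\partial_{\zb}$. One small remark: your route for $\sS_{D_r}[\partial_{\zb}f]=\partial_z f$ as written uses $\sS_{D_r}[g]=\sC_{D_r}[\partial_z g]$ with $g=\partial_{\zb}f$, which needs $f\in C^2_c$; you can stay in $C^1_c$ (matching the stated domain for (iii)) by instead writing $\sS_{D_r}[\partial_{\zb}f]=\partial_z\sC_{D_r}[\partial_{\zb}f]=\partial_z f$.
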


\subsection{Beltrami's equation} With the various properties above established, we take a fundamental step to the proof of Proposition \ref{p:conformal}. As usual we denote by $C^{N, \alpha}_0 (\overline{D}_r)$ the closure of $C^{N,\alpha}_c\left(D_r\right)$ in the H\"older space $C^{N,\alpha} (\overline{D}_r)$.

\begin{lemma}\label{linhombel}
 Let $r\geq1$, $N \in \N, N\geq 1$, $0<\beta\leq\alpha<1$, $\mu,h \in C^{N,\alpha}_0 (\bar D_r)$. Then there exist constants $C(N,r,\alpha,\beta)$, $c(N,r,\alpha,\beta)$ and $\bar{C}(\alpha)$ such that if $\|\mu\|_\alpha \leq c$ there exists a solution $\Phi\in C^{N+1,\alpha}(\bar D_r)$ to 
 \begin{equation}\label{inhombel}
  \Phi_{\overline{z}} - \mu\Phi_z = h
 \end{equation}
with 
  \begin{align}
    \|\Phi\|_{1 +\alpha} &\leq \bar{C}\|h\|_{\alpha}\, ,\\
    \|D^k\Phi\|_{1+\beta}& \leq C \left(\|D^{k}h\|_\beta+\|D^{k}\mu\|_\beta\|h\|_\beta\right)\, ,
  \end{align}
  for any $1\leq k \leq N$. 
\end{lemma}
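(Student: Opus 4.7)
The strategy is to reduce the Beltrami equation \eqref{inhombel} to a fixed-point problem. Seeking $\Phi$ in the form $\Phi = \sC_{D_r}[\omega]$ for some $\omega\in C^{N,\alpha}(\bar D_r)$, Lemma \ref{vekua}(ii) automatically gives $\Phi_{\bar z}=\omega$ and $\Phi_z = \sS_{D_r}[\omega]$, so \eqref{inhombel} becomes equivalent to
\[
(I-\mu \sS_{D_r})\omega = h\,.
\]
By Lemma \ref{vekua}(iii) the Beurling transform $\sS_{D_r}$ is a bounded operator on $C^{\alpha}(\bar D_r)$ with some norm $K_\alpha$. Choosing the smallness constant $c$ so that $C\,c\, K_\alpha<1/2$ (where $C$ is the constant in the product rule in $C^\alpha$), the operator $I-\mu \sS_{D_r}$ is invertible in $C^{\alpha}(\bar D_r)$ via Neumann series and we set $\omega := (I-\mu \sS_{D_r})^{-1}h$. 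Applying Lemma \ref{vekua}(iii) to $\Phi=\sC_{D_r}[\omega]$ then yields $\|\Phi\|_{1+\alpha}\leq \bar C\|h\|_\alpha$.

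For the higher-regularity estimates the idea is to bootstrap directly in the fixed-point identity $\omega = h + \mu\sS_{D_r}[\omega]$. Since $r\leq 2$ and $\beta\leq \alpha$, one also has $\|\mu\|_\beta\leq C\|\mu\|_\alpha\leq Cc$ on the bounded domain, so the same Neumann argument in $C^\beta$ produces $\|\omega\|_\beta\leq C\|h\|_\beta$, and in particular $\|\sS_{D_r}[\omega]\|_0\leq C\|h\|_\beta$. Applying the product estimate \eqref{e:product} in $C^{k+\beta}$ to $\mu\sS_{D_r}[\omega]$ and using Lemma \ref{vekua}(iii) to bound $\|\sS_{D_r}[\omega]\|_{k+\beta}\leq C\|\omega\|_{k+\beta}$, one obtains, modulo intermediate-order terms absorbed by the interpolation inequality \eqref{e:Holderinterpolation2},
\[
\|\omega\|_{k+\beta} \leq \|h\|_{k+\beta} + C\|\mu\|_0\,\|\omega\|_{k+\beta} + C\|\mu\|_{k+\beta}\,\|h\|_\beta\,.
\]
Absorbing the middle term using the smallness of $\|\mu\|_0$ yields $\|\omega\|_{k+\beta}\leq C(\|h\|_{k+\beta}+\|\mu\|_{k+\beta}\|h\|_\beta)$.

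To obtain the advertised form, one uses $h,\mu\in C^{N,\alpha}_0(\bar D_r)$: all derivatives up to order $N$ vanish on $\partial D_r$, so iterated Poincaré-type inequalities give $\|h\|_{k+\beta}\leq C\|D^k h\|_\beta$ and $\|\mu\|_{k+\beta}\leq C\|D^k \mu\|_\beta$. Combined with $\|D^{k+1}\Phi\|_\beta\leq C\|\omega\|_{k+\beta}$ (coming from $\Phi_{\bar z}=\omega$, $\Phi_z=\sS_{D_r}[\omega]$ and Lemma \ref{vekua}(iii)), this completes the proof.

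The main technical obstacle is the bookkeeping of the intermediate-order cross terms generated by the full Leibniz formula applied to $\mu\sS_{D_r}[\omega]$: each such term is a product $D^j \mu\cdot D^{k-j}\sS_{D_r}[\omega]$ with $0<j<k$, and controlling them by the two endpoint products requires a careful inductive application of the interpolation inequalities \eqref{e:Holderinterpolation2} together with the smallness of $\|\mu\|_0$. This step is routine but tedious; its purpose is to preserve the crucial \emph{linear} dependence on $\|D^k\mu\|_\beta$ in the final estimate, which is exactly what drives the norm estimates \eqref{e:conf_k} on the conformal coordinates in Proposition \ref{p:conformal}.
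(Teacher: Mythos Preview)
Your overall strategy coincides with the paper's: set $\Phi=\sC_{D_r}[\omega]$, reduce \eqref{inhombel} to $(I-\mu\sS_{D_r})\omega=h$, and solve by a Neumann series in $C^\alpha$ to get the $C^{1+\alpha}$ estimate. The higher-order part, however, has a genuine gap. The absorption step
\[
\|\omega\|_{k+\beta}\leq \|h\|_{k+\beta}+C\|\mu\|_0\,\|\omega\|_{k+\beta}+C\|\mu\|_{k+\beta}\,\|h\|_\beta
\]
presupposes $\|\omega\|_{k+\beta}<\infty$. But the fixed-point identity $\omega=h+\mu\sS_{D_r}[\omega]$ does \emph{not} bootstrap: $\sS_{D_r}$ is an operator of order zero (Lemma~\ref{vekua}(iii)), so the right-hand side has exactly the same regularity as $\omega$ and no gain is produced. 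From the contraction argument you only know $\omega\in C^\alpha$, and nothing in your outline upgrades this to $\omega\in C^{k+\beta}$ before you try to absorb.

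The paper closes this gap by a different device. After reducing (by approximation) to $h,\mu$ compactly supported in $D_r$, it invokes Corollary~\ref{commute}: on compactly supported functions $D^k$ commutes with $\sS_{D_r}$ and $\sC_{D_r}$. This allows a direct inductive estimate of each Neumann term $\omega_n=(\mu\sS_{D_r})^n h$ in $C^{k+\alpha}$ (inequality \eqref{conv}), whose summation simultaneously establishes $\omega\in C^{N,\alpha}$ and the bound on $\|D^k\omega\|_\beta$. The same commutation gives $D^k\Phi=\sC_{D_r}[D^k\omega]$, so $\|D^k\Phi\|_{1+\beta}\leq C_\beta\|D^k\omega\|_\beta$ directly, making your Poincar\'e step unnecessary. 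Your route can be repaired---for instance by proving summability of $\|\omega_n\|_{k+\beta}$ via the recursion $\|\omega_n\|_{k+\beta}\leq C\|\mu\|_0\|\omega_{n-1}\|_{k+\beta}+C\|\mu\|_{k+\beta}\|\omega_{n-1}\|_\beta$ combined with the $C^\beta$ geometric decay of $\omega_{n-1}$---but once completed this is essentially the paper's term-by-term argument.
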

\begin{proof} By a standard approximation argument, it suffices to prove the lemma under the assumption that the supports of $\mu$ and $h$ are compactly contained in $D_r$.

In order to simplify our notation we will use $\sS$ and $\sC$ in place of $\sS_{D_r}$ and $\sC_{D_r}$. We know (thanks to Lemma \ref{vekua}) that $\sS:C^{0,\alpha}(\bar D_r)\rightarrow C^{0,\alpha}(\bar D_r)$ as well as $\sC:C^{0,\alpha}(\bar D_r)\rightarrow C^{1,\alpha}(\bar D_r)$ 
and that there exist two constants $C_\alpha, C_\beta$ (wlog $C_\alpha,C_\beta > 1$) such that
  \begin{align*}
    \|\sS[f]\|_{\alpha} &\leq \|\sC[f]\|_{1+\alpha}\leq C_\alpha \|f\|_\alpha\, ,\\
    \|\sS[f]\|_\beta &\leq \|\sC[f]\|_{1+\beta}\leq C_\beta\|f\|_\beta\, .
  \end{align*}
Consider the operator 
    \[\sL_\alpha :C^{0,\alpha}(\overline{D_r})\rightarrow C^{0,\alpha}(\overline{D_r}), \quad f \mapsto h+\mu \sS[f]\]
We have
    \[ \|\sL_\alpha (f_1)-\sL_\alpha (f_2)
\|_{\alpha} \leq \|\mu\|_\alpha C_\alpha\|f_1-f_2\|_\alpha\, . \]
So, if
\[ 
\|\mu\|_{\alpha} \leq \frac{1}{2C_\alpha}
\]
then $\sL_\alpha$ has a unique fixpoint $f\in C^{0,\alpha}(\bar D_r)$. This means 
\[ 
f=h+\mu \sS[f]\, ,
\]
from which we deduce 
    \[ \|f\|_{\alpha} \leq \frac{\|h\|_{\alpha}}{1-\|\mu\|_{\alpha}C_\alpha} \leq 2\|h\|_{\alpha}\]
and 
  \begin{equation*}
    f = \left( Id-\mu \sS \right)^{-1}h = \sum\limits_{n\geq0}\left( \mu \sS \right)^{n}h \eqqcolon \sum\limits_{n\geq0} \omega_n\, .
  \end{equation*}
This shows in particular that $f$ is compactly supported. Using Corollary \ref{commute} one can show by induction that for any $1\leq k\leq N$ and any $n\geq1$
  \begin{equation}\label{conv}
    \|D^k\omega_n\|_{\alpha} \leq \tilde{C}C_\alpha(2\tilde{C}C_\alpha\|\mu\|_{\alpha})^{n-1}\left( \|\mu\|_{\alpha}\|D^{k}h\|_\alpha+\|D^{k}\mu\|_\alpha\|h\|_\alpha \right)\, ,
   \end{equation}
where $\tilde{C}$ is the constant in \eqref{e:product}. Therefore, if we require 
  \begin{equation}\label{mualpha}
    \|\mu\|_{\alpha} \leq \left( 4\tilde{C}C_\alpha C_\beta (2r)^{\alpha-\beta}\right)^{-1}\, ,
   \end{equation}
then the series 
      \[ \sum\limits_{n\geq0} D^k \omega_n \]
converges uniformly in $C^{0,\alpha}(\overline{D_r})$ to $D^kf$, hence $f\in C^{N,\alpha}(\overline{D_r})$. Moreover, by the same argument
 \begin{align}
     \|D^kf\|_{\beta} &\leq \tilde{C}C_\beta ( \|\mu\|_{\beta}\|D^{k}h\|_\beta+\|D^{k}\mu\|_\beta\|h\|_\beta)\sum\limits_{n\geq1}\left( 2\tilde{C}C_\beta(2r)^{\alpha-\beta}\|\mu\|_{\alpha}\right)^{n-1}\nonumber\\
& \qquad\qquad +\|D^{k}h\|_\beta
		      \leq C\left( \|D^{k}h\|_\beta+\|D^{k}\mu\|_\beta\|h\|_\beta \right)\, ,\label{DMf}
 \end{align}
with the help of \eqref{mualpha}, where the constant  $C$ depends only on $N$, $r$, $\alpha$ and $\beta$. Now we define 
     \[ \Phi(z) = \sC[f](z),\quad z\in D_r\,. \]
By property (iii) of Lemma \ref{vekua} we have
     \[ \Phi_{\zb} = f, \Phi_{z} = \sS[f]\, ,\]
hence 
     \[ \Phi_{\zb} - \mu \Phi_{z} = f- \mu \sS[f] = (Id-\mu \sS)f = h \, ,\]
so the function $\Phi$ solves \eqref{inhombel} and satisfies  
     \[\|\Phi\|_{1+\alpha} \leq C_\alpha \|f\|_{\alpha}\leq 2C_{\alpha}\|h\|_{\alpha}\, .\]
Since $D^k\Phi = \sC[D^kf]$ by Corollary \ref{commute} we get by recalling \eqref{DMf}
     \[ \|D^k\Phi\|_{1+\beta} \leq C_\beta\|D^kf\|_{\beta} \leq C \left(  \|D^{k}h\|_\beta+\|D^{k}\mu\|_\beta\|h\|_\beta\right)\label{D^kf}\, .\]
This shows the claim.
\end{proof}

We immediately get the following 
\begin{corollary}\label{Beltrami}
   Let $r\geq1$, $N \in \N, N\geq 1$, $0<\beta\leq\alpha<1$, $\mu \in C^{N,\alpha}_0 (\bar D_r)$. Then there exist constants $C(N,r,\alpha,\beta)$, $c(N,r,\alpha,\beta)$ and $\bar{C}(\alpha)$
   such that if $\|\mu\|_{\alpha} \leq c$ there exists a solution $\Phi\in C^{N+1,\alpha}(\bar D_r)$ to the Beltrami equation
      \begin{equation}\label{Beltramieq}
	\Phi_{\overline{z}} = \mu\Phi_z
      \end{equation}     
with 
      \begin{align}
	\|\Phi(z)-z\|_{1+\alpha} & \leq \bar{C}\|\mu\|_{\alpha}\, ,\\
	\|D^k\left(\Phi(z)-z\right)\|_{1+\beta} &\leq C\|D^{k}\mu\|_{\beta}\, ,
      \end{align}
for any $1\leq k\leq N$. 
\end{corollary}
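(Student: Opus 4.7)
The plan is to reduce the homogeneous Beltrami equation \eqref{Beltramieq} directly to the inhomogeneous one already handled in Lemma \ref{linhombel}. The natural ansatz is to seek the solution in the form $\Phi(z) = z + \Psi(z)$, so that $\Phi_{\bar z} = \Psi_{\bar z}$ and $\Phi_z = 1 + \Psi_z$. Substituting into \eqref{Beltramieq} gives
\begin{equation*}
\Psi_{\bar z} - \mu\,\Psi_z = \mu,
\end{equation*}
which is exactly \eqref{inhombel} with the right-hand side $h := \mu$. Since $\mu \in C^{N,\alpha}_0(\bar D_r)$ by hypothesis, we may apply Lemma \ref{linhombel} with this choice of $h$ (the smallness condition on $\mu$ required by that lemma will be inherited from the smallness condition we place on $c$).

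First I would verify the $C^{1+\alpha}$ estimate. Lemma \ref{linhombel} produces $\Psi \in C^{N+1,\alpha}(\bar D_r)$ with
\begin{equation*}
\|\Psi\|_{1+\alpha} \leq \bar C \|h\|_\alpha = \bar C \|\mu\|_\alpha,
\end{equation*}
which is exactly the first claimed estimate for $\Phi(z) - z = \Psi(z)$, with the same constant $\bar C(\alpha)$. Since $\Psi \in C^{N+1,\alpha}(\bar D_r)$, we obtain $\Phi \in C^{N+1,\alpha}(\bar D_r)$ as required.

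For $1 \leq k \leq N$, Lemma \ref{linhombel} yields
\begin{equation*}
\|D^k \Psi\|_{1+\beta} \leq C\bigl(\|D^k \mu\|_\beta + \|D^k \mu\|_\beta\,\|\mu\|_\beta\bigr) = C \|D^k \mu\|_\beta \bigl(1 + \|\mu\|_\beta\bigr).
\end{equation*}
To absorb the factor $(1+\|\mu\|_\beta)$, I would use the inclusion $C^\alpha(\bar D_r) \hookrightarrow C^\beta(\bar D_r)$ (valid since $r \geq 1$ and $\beta \leq \alpha$), giving $\|\mu\|_\beta \leq C(r,\alpha,\beta)\|\mu\|_\alpha \leq C(r,\alpha,\beta)\,c$. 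By shrinking $c = c(N,r,\alpha,\beta)$ if necessary so that $C(r,\alpha,\beta)\,c \leq 1$, we obtain $1 + \|\mu\|_\beta \leq 2$, and consequently
\begin{equation*}
\|D^k(\Phi - z)\|_{1+\beta} = \|D^k \Psi\|_{1+\beta} \leq 2C \|D^k \mu\|_\beta,
\end{equation*}
which is the second claimed estimate (after renaming the constant).

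There is no real obstacle here: the whole point is that the change of unknown $\Phi = z + \Psi$ converts the fixed-point problem for the Beltrami equation into an affine problem that Lemma \ref{linhombel} already solves. The only minor care needed is in (a) checking that the smallness threshold $c$ in Corollary \ref{Beltrami} can be taken small enough to meet both the hypothesis of Lemma \ref{linhombel} and the additional requirement $\|\mu\|_\beta \leq 1$, and (b) noting that all the $C^{N,\alpha}_0$ hypotheses on $h = \mu$ demanded by Lemma \ref{linhombel} are automatically satisfied by the assumption $\mu \in C^{N,\alpha}_0(\bar D_r)$.
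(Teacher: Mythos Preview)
Your proposal is correct and follows exactly the same approach as the paper: the ansatz $\Phi(z)=z+\Psi(z)$ reduces \eqref{Beltramieq} to the inhomogeneous equation of Lemma \ref{linhombel} with $h=\mu$, and the two estimates follow directly. If anything, you are slightly more explicit than the paper in handling the extra factor $(1+\|\mu\|_\beta)$, which the paper absorbs silently into the constant $C$.
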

\begin{proof}
 In the Lemma \ref{linhombel} choose $h=\mu$ to recover a constant $c$ such that if $\|\mu\|_{\alpha}\leq c$ then we find $\phi$ solving
 \[\phi_{\overline{z}}-\mu\phi_z=\mu\, .\]
 Set $\Phi(z) = z+\phi(z)$. Then obviously 
 \[\Phi_{\overline{z}}=\mu\Phi_z\, \]
 and using Lemma \ref{linhombel} we find 
  \[ \|\Phi(z)-z\|_{1+\alpha} = \|\phi\|_{1+\alpha} \leq \bar{C}\|\mu\|_{\alpha}\, ,\]
 and 
 \[ \|D^k(\Phi(z)-z)\|_{1+\beta} = \|D^k\phi\|_{1+\beta} \leq C\|D^{k}\mu\|_{\beta}\]
for any $1\leq k\leq N$, which is what we wanted.
\end{proof}

\subsection{Proof of Proposition \ref{p:conformal}} Given the estimates of the previous paragraphs, Proposition \ref{p:conformal} can be proved following the classical approach, see for instance \cite[Addendum 1 to Chapter 9]{Spivak4}. We report however the argument for the reader's convenience. 

With a simple scaling argument we can assume $r=1$. Let $x,y$ be global coordinates on $\bar D_1$. Then $g$ takes the form
  \[g =  \xi dx^2+2 \zeta dxdy+\omega dy^2\, ,\]
for some functions $\xi,\zeta,\omega \in C^{N,\alpha}(D_1)$. We want to find a function $\Phi:D_1\to \R^2, (x,y)\mapsto \left(\Phi_1(x,y),\Phi_2(x,y)\right) \eqqcolon (s,t)$ such that in these new coordinates we have
  \[g = \rho^2\circ\Phi^{-1}(s,t)\left( ds^2+dt^2 \right)\, ,\]
hence 
  \[g=\rho^2\left( \left(\Phi_{1x}^2+\Phi_{2x}^2\right)dx^2+2\left(\Phi_{1x}\Phi_{1y}+\Phi_{2x}\Phi_{2y}\right)dxdy+ \left(\Phi_{1y}^2+\Phi_{2y}^2\right)dy^2 \right)\, ,\]
or 
  \begin{equation}\label{geq}
    g = \rho^2\left(\nabla\Phi_1\otimes\nabla\Phi_1+\nabla\Phi_2\otimes\nabla\Phi_2\right)\, .
  \end{equation}  
A comparison yields 
  \[\xi\omega-\zeta^2 = \rho^4\left( \Phi_{1x}\Phi_{2y}-\Phi_{1y}\Phi_{2x} \right)^2 = \rho^4J\Phi^2 \, ,\]
with $J\Phi = \det\nabla\Phi$. Consequently 
  \begin{equation}\label{rho}
    \rho^2 = \frac{\sqrt{\Delta}}{J\Phi}\, ,
  \end{equation}
where $\Delta = \xi\zeta-\omega^2$.\\
It is convenient to switch to complex notation. Consider $z=x+iy$, $\Phi(z) =\Phi_1(x,y)+i\Phi_2(x,y)$. A computation shows that \eqref{geq} is equivalent to the Beltrami equation for $\Phi$:
  \begin{equation}\label{mubeltrami}
    \Phi_{\overline{z}}(z) =\mu(z)\Phi_z(z), \quad z\in D_1\, ,
  \end{equation}
with the coefficient 
  \begin{equation}\label{mu0}
    \mu = \frac{\xi-\omega+2i\zeta}{\xi+\omega+2\sqrt{\Delta}}\, .
  \end{equation}
Now we extend $g$ to a symmetric $2\times 2$ tensor to $\R^2$ so that
  \begin{align*}
    \|g-e\|_{\alpha;\R^2} &\leq \bar{C}(\alpha)\|g-e\|_{\alpha;\bar D_1} \, ,\\ 
    \|g-e\|_{k+\beta;\R^{2}} & \leq C(N,\alpha,\beta)\|g-e\|_{k+\beta;\bar D_1}\, ,
  \end{align*}
for $1\leq k\leq N$. In particular note that if $\sigma_1$ is chosen sufficiently small, then $g\geq \frac{1}{2} e$ on the whole $\R^2$. Repeated applications of \eqref{e:chain3} and \eqref{e:product} to the 
expression \eqref{mu0} then yield
  \begin{align}
   \|\mu\|_{\alpha;\R^2} &\leq C\|g-e\|_{\alpha;D_1} \label{e:muestimatealpha} \, ,\\
   \|\mu\|_{k+\beta;\R^2}&\leq C\|g-e\|_{k+\beta;D_1} \label{e:muestimatebeta} \, ,
  \end{align}
where the former constant in \eqref{e:muestimatealpha} is a universal one and the latter depends only on $\alpha$, $\beta$ and $N$. Hence $\mu \in C^{N,\alpha}\left( \R^{2} \right)$.
Next we choose a $C^\infty$ cutoff function $\eta$ such that 
  \[ \eta(z) = 
     \begin{cases}
       1, & \text{ if } z\in \bar D_{1}\\
       0, & \text{ if } z\in \C\setminus D_{\frac{3}{2}}
     \end{cases}\]
With this define a new function
  \[\tilde{\mu} = \eta\mu\, .\]
By definition we have $\tilde{\mu} \in C^{N,\alpha}_c(D_2)$, thus by Corollary \ref{Beltrami} there exist constants $C$, $c$ and $\overline{C}$
such that if $\|\tilde{\mu}\|_{\alpha;D_2} \leq c$ then there exists $\Phi \in C^{N+1,\alpha}(\bar D_2)$ with
   \[\Phi_{\overline{z}}(z) =\tilde{\mu}(z)\Phi_z(z), \quad z\in D_2\, , \]
and
  \begin{align}
    \|\Phi(z)-z\|_{1+\alpha;D_2} &\leq \overline{C}\|\tilde{\mu}\|_{\alpha;D_2}\label{Phiestimate}\, ,\\
    \|D^k(\Phi(z)-z)\|_{1+\beta;D_2} &\leq C\|\tilde{\mu}\|_{k+\beta;D_2}\nonumber\, ,
  \end{align}
for any $1\leq k\leq N$. Observe that in particular $\Phi$ solves \eqref{mubeltrami}. Moreover, 
    \[\|\tilde{\mu}\|_{\alpha;D_2} \leq \|\mu\|_{\alpha;D_2}\|\eta\|_{\alpha;D_2}\leq C\|\mu\|_{\alpha;D_2} \leq C\|g-e\|_{\alpha;D_1}\, ,\]
and similarly
    \[\|\tilde{\mu}\|_{k+\beta;D_2} \leq C \|\mu\|_{k+\beta;D_2}\leq C\|g-e\|_{k+\beta;D_1} \, ,\]
by \eqref{e:muestimatealpha} and \eqref{e:muestimatebeta}. This shows that if $\|g-e\|_{\alpha;D_1}\leq \sigma_1$ with $\sigma_1$ small enough, we recover a coordinate change $\Phi$ solving \eqref{mubeltrami}. The estimates for $\Phi$ follow immediately. 
For the estimates of $\rho$ we use the fact that due to \eqref{Phiestimate} we have
  \[ (1-C\|g-e\|_{\alpha, \bar D_1})^2 \leq J\Phi \leq (1+C\|g-e\|_{\alpha, \bar D_1})^2\, ,\]
which together with the expression \eqref{rho}, the bounds on $\Phi$, \eqref{e:chain3} and \eqref{e:product} imply
  \[ \|D^k\rho\|_{\beta} \leq C\|g-e\|_{k+\beta; \bar D_1}\]
for $1\leq k\leq N$. This proves the claim. 

\bibliographystyle{acm}
\bibliography{iso}

\end{document}